\documentclass[12pt,twoside]{amsart}

\usepackage{amsmath,amsthm,amscd,amssymb,mathrsfs,graphicx,amsfonts,mathrsfs}
\usepackage{amsfonts}
\usepackage{amssymb,enumerate}
\usepackage{amsthm}
\usepackage[all]{xy}
\usepackage{hyperref}
\allowdisplaybreaks[4] \footskip=12pt
\renewcommand{\uppercasenonmath}[1]{}

\numberwithin{equation}{section} \theoremstyle{plain}
\newtheorem*{thm*}{Main Theorem}
\newtheorem{thm}{Theorem}[section]
\newtheorem{cor}[thm]{Corollary}
\newtheorem*{cor*}{Corollary}
\newtheorem{lem}[thm]{Lemma}
\newtheorem*{lem*}{Lemma}

\newtheorem*{fact*}{Fact}

\newtheorem*{nota*}{Notation}
\newtheorem{prop}[thm]{Proposition}
\newtheorem*{prop*}{Proposition}
\newtheorem{rem}[thm]{Remark}
\newtheorem*{rem*}{Remark}

\newtheorem*{observation*}{Observation}

\newtheorem*{exa*}{Example}
\newtheorem{df}[thm]{Definition}
\newtheorem*{df*}{Definition}

\newtheorem*{con*}{Construction}

\newtheorem*{conj*}{Conjecture}

\renewcommand{\geq}{\geqslant}
\renewcommand{\leq}{\leqslant}

\begin{document}
\begin{center}
{\large  \bf Grade and Cohen-Macaulayness for DG-modules}

\vspace{0.5cm} Yuancheng Ning and Xiaoyan Yang\\
School of Science, Zhejiang University of Science and Technology, Hangzhou 310023\\
E-mail: yangxy@zust.edu.cn
\end{center}

\bigskip
\centerline { \bf  Abstract}
\leftskip10truemm \rightskip10truemm \noindent
We establish an inequality relating the
projective dimension of a DG-module in $\mathrm{D}^\mathrm{b}_\mathrm{f}(A)$ to its grade and introduce
the concept of perfect DG-modules as a natural generalization of perfect modules. It is proved that a DG-module $M$ over a local Cohen-Macaulay DG-ring with constant amplitude is Cohen-Macaulay if and only if $M$ is perfect and $\mathrm{amp}M \leq \mathrm{amp}\mathrm{R}\Gamma_{\bar{\mathfrak{m}}}(M)$. An affirmative answer is provided to Conjecture 2.11 of Yoshida [J. Pure Appl. Algebra 123 (1998) 313--326]. We also study the grade of DG-modules with finite injective dimension and examine the preservation of Cohen-Macaulayness under tensor products.
\leftskip10truemm \rightskip10truemm \noindent
\\[2mm]
{\it Keywords:} grade; perfect DG-module; Cohen-Macaulay DG-module\\
{\it 2020 Mathematics Subject Classification:} 16E45, 13C14, 16E65.

\leftskip0truemm \rightskip0truemm
\section{\bf Introduction}\label{pre}

Let $R$ be a commutative noetherian ring and $M$ a finitely generated $R$-module. The \emph{grade}, $\mathrm{grade}_RM$, of $M$
was introduced by Rees as the least integer $t\geq0$ such that $\mathrm{Ext}^t_R(M,R)\neq0$. Rees \cite{R}
showed that $\mathrm{grade}_RM$ also coincides with the greatest length of an $R$-regular sequence contained in the annihilator $\mathrm{Ann}_R M$, which reveals a close connection with the classical notion of depth, $\mathrm{depth}_RM$.
The projective dimension, $\mathrm{projdim}_RM$, of $M$ is a fundamental invariant in homological algebra. As the groups $\mathrm{Ext}^t_R(M,R)$ can be computed from a projective resolution of $M$, one obviously has $\mathrm{grade}_RM\leq\mathrm{projdim}_RM$.

Research on grade and projective dimension frequently examines the structure of the set $\{i\geq0\hspace{0.02cm}|\hspace{0.02cm}\mathrm{Ext}^i_R(M,R)\neq0\}$. A central problem is to determine when this set consists of consecutive integers, specifically equal to the interval $[\mathrm{grade}_RM, \mathrm{projdim}_RM]$ (or more generally, equal to $[\mathrm{depth}R-\mathrm{depth}_RM, \text{some upper bound}]$)? For a commutative noetherian local ring $R$, the equality $$\mathrm{sup}\{i\geq0\hspace{0.02cm}|\hspace{0.02cm}\mathrm{Ext}^i_R(M,R)\neq0\}=\mathrm{depth}R-\mathrm{depth}_RM$$
has been established in many settings involving finite homological dimensions. This holds, for example, when
$M$ has finite projective dimension, or when
$R$ is Gorenstein. Relevant studies on this topic can be found in a series of works (e.g. \cite{AY,DAT,F,KY,Ya,Yo}).

A finitely generated $R$-module
$M$ is called \emph{perfect} if the equality $\mathrm{grade}_RM=\mathrm{projdim}_RM$ holds. Perfect modules play a important role in both homological algebra and algebraic geometry. A classical result states that over a local Cohen-Macaulay  ring, a finitely generated module is Cohen-Macaulay if and only if it is perfect. This reveals a profound equivalence between the homological property of perfectness and the algebra geometric property of being Cohen-Macaulay.

The concept of perfect modules has been successfully generalized. Foxby \cite{F} introduced the notion of G-perfect modules by replacing projective dimension with G-dimension. Jorge-P$\acute{\mathrm{e}}$rez, Martins and Mendoza-Rubio \cite{JMM} introduced the quasi-perfect modules by replacing projective dimension with quasi-projective dimension.  These allow for the development of a rich theory analogous to that of perfect modules over broader classes of rings (such as Gorenstein rings and Cohen-Macaulay rings), further extending the application of homological methods to the study of module and ring properties.

Yoshida \cite{Yo} investigated tensor products involving perfect modules and maximal Cohen-Macaulay modules. He showed that, over a local Cohen-Macaulay ring
$R$, such tensor products are always Cohen-Macaulay. More generally, the same conclusion holds provided the Codimension Conjecture is true, that is, for a local noetherian ring
$R$ and a finitely generated
$R$-module $M$
 with finite projective dimension, the equality $\mathrm{dim}_RM+\mathrm{grade}_RM=\mathrm{dim}R$ holds. Yoshida subsequently proposed the following conjecture.

\vspace{1.5mm} \noindent{\bf Conjecture.}\label{lem:0.3} Let $R$ be a local noetherian ring  and $M$ a perfect $R$-module of positive dimension and $N$ a finitely generated $R$-module with $\mathrm{dim}_RN=\mathrm{dim}R$. If $M\otimes_RN$ is Cohen-Macaulay, then $N$ is a maximal Cohen-Macaulay $R$-module.
\vspace{1.5mm}

DG-rings (short
for differential graded rings) allow
us to use techniques of  homological algebra of ordinary rings in a much wider context.
The theory of local Cohen-Macaulay DG-rings offers a powerful generalization of classical homological and commutative algebra. It allows the study of fundamental properties, such as depth, regular sequences and singularities of rings in a more general setting. The systematic study of local Cohen-Macaulay DG-rings and modules was initiated by Shaul in \cite{Shaul,sh20}.  He demonstrated that much of the classical theory of Cohen-Macaulay rings and modules can be generalized to the DG framework,  also provided many natural examples of local Cohen-Macaulay DG-rings,  such as sequence-regular DG-rings and local Gorenstein DG-rings. In subsequent work \cite{sh21}, Shaul further developed some results on Cohen-Macaulay DG-rings, enriching the theory of these structures within homological algebra and algebraic geometry.

This paper focuses on the grade and the Cohen-Macaulayness for DG-modules, is organized as follows. In Section 2, we provide definitions and some results that are considered in this paper. In Section 3, we establish an inequality between the grade and the projective dimension of DG-modules and introduce the concept of perfect DG-modules. Within this framework, several classical results on perfect modules are extended to the setting of perfect DG-modules. We prove that for a DG-module $M$ over a local Cohen-Macaulay DG-ring with constant amplitude, it is Cohen-Macaulay if and only if it is perfect and satisfies $\mathrm{amp}M\leq\mathrm{amp}\mathrm{R}\Gamma_{\bar{\mathfrak{m}}}(M)$ (see Theorem \ref{lem2.5}). Furthermore, we give an affirmative answer to the above conjecture (see Theorem \ref{lem3.4}). As applications, we present several Cohen-Macaulay criterias for DG-rings and DG-modules, and provide a partial answer to the Codimension Conjecture of Auslander. In Section 4, we study the grade of DG-modules of finite injective dimension and obtain the formula $\mathrm{lc.dim}_AM=\mathrm{depth}A-\mathrm{grade}_AM$ for any
 $M\in\mathrm{D}^\mathrm{b}_\mathrm{f}(A)$ with finite injective dimension (see Theorem \ref{lem2.6}). Moreover, by utilizing the Cohen-Macaulayness of the Koszul DG-module $M/\hspace{-0.15cm}/\bar{\emph{\textbf{x}}}$  associated with a finite sequence $\bar{\emph{\textbf{x}}}=\bar{x}_1,\cdots,\bar{x}_n$ in $A$, we examine the preservation of Cohen-Macaulayness under tensor products in the DG context (see Theorem \ref{lem2.3} and Corollary \ref{lem1.6}).

\bigskip
\section{\bf Preliminaries}
\vspace{2mm}
This section collects some preliminaries that will be used throughout this paper. For terminology we shall follow \cite{Y} and \cite{Shaul}.

\vspace{1.5mm}
{\bf DG-modules.} An \emph{associative DG-ring} $A$ is a $\mathbb{Z}$-graded ring
$A=\bigoplus_{i\in\mathbb{Z}}A^i$
equipped with a $\mathbb{Z}$-linear map $d:A\rightarrow A$ of degree +1 such that $d\circ d=0$, and such that
\begin{equation}\label{eq:2.1}d(ab)=d(a)b+(-1)^{i}ad(b)
\end{equation}
for $a\in A^i$ and $b\in A^j$.
 A DG-ring $A$ is
called \emph{commutative} if $b\cdot a=(-1)^{i\cdot j}ab$ for $a\in A^i$ and $b\in A^j$, and $a^2=0$
if $i$ is odd.
A DG-ring $A$ is called \emph{non-positive} if $A^i=0$ for all $i>0$.
 A non-positive DG-ring $A$ is called \emph{noetherian} if the
ring $\mathrm{H}^0(A)$ is noetherian and the $\mathrm{H}^0(A)$-module $\mathrm{H}^i(A)$ is
finitely generated for all $i<0$.
If $A$ is a noetherian DG-ring and $(\mathrm{H}^0(A),\bar{\mathfrak{m}},\bar{\kappa})$ is a local ring, then we say that $(A,\bar{\mathfrak{m}},\bar{\kappa})$ is \emph{local noetherian}.
\textbf{Unless stated to the contrary we assume throughout this paper that $A$ is a commutative and noetherian DG-ring with $\mathrm{amp}A<\infty$.}

A DG-module $M$ is a graded $A$-module together with a differential
of degree +1 satisfying the Leibniz rule similar to (2.1).
 For a DG-module $M$, set $\mathrm{inf}M:=\mathrm{inf}\{n\in\mathbb{Z}\hspace{0.03cm}|\hspace{0.03cm}\mathrm{H}^n(M)\neq0\}$,
$\mathrm{sup}M:=\mathrm{sup}\{n\in\mathbb{Z}\hspace{0.03cm}|\hspace{0.03cm}\mathrm{H}^n(M)\neq0\}$
and $\mathrm{amp}M:=\mathrm{sup}M-\mathrm{inf}M$. The $i$th shift of $M$ is
denoted  by $M[i]$ for $i\in\mathbb{Z}$.
The derived category of DG-modules over $A$ is denoted by $\mathrm{D}(A)$, and write $-\otimes_A^\mathrm{L}-$
 for its monoidal product, $\mathrm{RHom}_A(-,-)$ for the internal hom. The full subcategory which consists of DG-modules
whose cohomology is bounded above (resp. bounded below, bounded) is denoted by $\mathrm{D}^-(A)$ (resp.
$\mathrm{D}^+(A)$, $\mathrm{D}^\mathrm{b}(A)$), denote by $\mathrm{D}_\mathrm{f}(A)$ the full subcategory of $\mathrm{D}(A)$
consisting of DG-modules with finite cohomology, and set
$\mathrm{D}^-_\mathrm{f}(A)=\mathrm{D}^-(A)\cap \mathrm{D}_\mathrm{f}(A)$, $\mathrm{D}^+_\mathrm{f}(A)=\mathrm{D}^+(A)\cap \mathrm{D}_\mathrm{f}(A)$, $\mathrm{D}^\mathrm{b}_\mathrm{f}(A)=\mathrm{D}^\mathrm{b}(A)\cap \mathrm{D}_\mathrm{f}(A)$.

We recall the definition of the projective, injective and flat dimensions of $M\in \mathrm{D}(A)$ introduced by Bird, Shaul, Sridhar and Williamson in \cite{BSSW}.

The \emph{projective dimension} of $M$ is defined by
\begin{center}$\mathrm{projdim}_AM=\mathrm{inf}\{n\in\mathbb{Z}\hspace{0.03cm}|\hspace{0.03cm}\mathrm{H}^i(\mathrm{RHom}_A(M,Y))=0\ \textrm{for\ any}\ Y\in\mathrm{D}^\mathrm{b}(A)\ \textrm{and}\ i>n+\mathrm{sup}Y\}$.\end{center}

The \emph{injective dimension} of $M$ is defined by
\begin{center}$\mathrm{injdim}_AM=\mathrm{inf}\{n\in\mathbb{Z}\hspace{0.03cm}|\hspace{0.03cm}\mathrm{H}^i(\mathrm{RHom}_A(Y,M))=0\ \textrm{for\ any}\ Y\in\mathrm{D}^\mathrm{b}(A)\ \textrm{and}\ i>n-\mathrm{inf}Y\}$.\end{center}

The \emph{flat dimension} of $M$ is defined by
$$\mathrm{flatdim}_AM=\mathrm{inf}\{n\in\mathbb{Z}\hspace{0.03cm}|\hspace{0.03cm}\mathrm{H}^{-i}(Y\otimes_A^\mathrm{L}M)=0\ \textrm{for\ any}\ Y\in\mathrm{D}^\mathrm{b}(A)\ \textrm{and}\ i>n-\mathrm{inf}Y\}.$$

{\bf Local Cohen-Macaulay DG-modules.} Let $\bar{\mathfrak{a}}\subseteq\mathrm{H}^0(A)$ be an ideal.
An $\mathrm{H}^0(A)$-module $\bar{M}$ is called \emph{$\bar{\mathfrak{a}}$-torsion} if for any $\bar{x}\in\bar{M}$ there is $n\in\mathbb{N}$ such that $\bar{\mathfrak{a}}^n\bar{x}=0$.
The category $\mathrm{D}_{\bar{\mathfrak{a}}\textrm{-tor}}(A)$ consisting of DG-modules $M$ so that
each $\mathrm{H}^0(A)$-module $\mathrm{H}^n(M)$ is $\bar{\mathfrak{a}}$-torsion. Following \cite{s19}, the inclusion functor
$\mathrm{D}_{\bar{\mathfrak{a}}\textrm{-tor}}(A)\hookrightarrow\mathrm{D}(A)$
has a right adjoint,
and composing this right adjoint with the inclusion, one has a triangulated
functor
$\mathrm{R}\Gamma_{\bar{\mathfrak{a}}}:\mathrm{D}(A)\rightarrow\mathrm{D}(A)$,
which is called the \emph{derived $\bar{\mathfrak{a}}$-torsion functor}.  The derived $\bar{\mathfrak{a}}$-adic completion $\mathrm{RHom}_A(\mathrm{R}\Gamma_{\bar{\mathfrak{a}}}(A),A)$, denoted by $\mathrm{L}\Lambda(A,\bar{\mathfrak{a}})$, is a commutative DG-ring.

Let $(A,\bar{\mathfrak{m}},\bar{\kappa})$ be a local DG-ring. Shaul \cite{Shaul} extended the definitions of Krull dimensions and depth of complexes to DG-setting. The \emph{local cohomology Krull dimension}
of a DG-module $M\in\mathrm{D}^-(A)$ is the number
\begin{center}$\mathrm{lc.dim}_AM:=
\mathrm{sup}\{\mathrm{dim}_{\mathrm{H}^0(A)}\mathrm{H}^\ell(M)+\ell\hspace{0.03cm}|\hspace{0.03cm}\ell\in\mathbb{Z}\}$,\end{center}where $\mathrm{dim}_{\mathrm{H}^0(A)}\mathrm{H}^\ell(M)$ is the usual Krull dimension of the $\mathrm{H}^0(A)$-module $\mathrm{H}^\ell(M)$.
 The \emph{depth} of a DG-module $N\in\mathrm{D}^+(A)$ is the number
\begin{center}$\mathrm{depth}_AN:=\mathrm{inf}\mathrm{RHom}_A(\bar{\kappa},N)$.\end{center}
Following \cite{Shaul},  a DG-module $M$ in $\mathrm{D}^{\mathrm{b}}_{\mathrm{f}}(A)$ is called
\emph{local Cohen-Macaulay} if
$$\mathrm{amp}M = \mathrm{amp}A = \mathrm{amp}\mathrm{R}\Gamma_{\bar{\mathfrak{m}}}(M)=\mathrm{lc.dim}_AM-\mathrm{depth}_AM,$$where the last equality is by \cite[Theorem 2.15]{Shaul} and \cite[Proposition 3.3]{Shaul}. A local Cohen-Macaulay DG-module $M$ is called \emph{maximal local Cohen-Macaulay} if
$\mathrm{lc.dim}_AM= \sup M +\mathrm{lc.dim}A$. A local DG-ring $A$ is called \emph{local Cohen-Macaulay} if $\mathrm{amp}A = \mathrm{amp}\mathrm{R}\Gamma_{\bar{\mathfrak{m}}}(A)$. A local DG-ring $A$ is called \emph{Gorenstein} if $\mathrm{injdim}_AA<\infty$.

\vspace{1.5mm}
{\bf Localization.} Let $\pi_A:A\rightarrow\mathrm{H}^0(A)$ be the canonical surjection and $\pi^0_A: A^0\rightarrow\mathrm{H}^0(A)$ be its degree 0 component. Given a prime ideal
$\bar{\mathfrak{p}}\in \mathrm{Spec}\mathrm{H}^0(A)$, let $\mathfrak{p}=(\pi^0_A)^{-1}(\bar{\mathfrak{p}})\in\mathrm{Spec}A^0$, and define $A_{\bar{\mathfrak{p}}}:=A\otimes_{A^0}A^0_\mathfrak{p}$. More generally, given $X\in\mathrm{D}(A)$, we define
\begin{center}$M_{\bar{\mathfrak{p}}}:=M\otimes_{A}A_{\bar{\mathfrak{p}}}=M\otimes_{A^0}A^0_\mathfrak{p}\in\mathrm{D}(A_{\bar{\mathfrak{p}}})$.\end{center} A prime ideal $\bar{\mathfrak{p}}\in\mathrm{Spec}\mathrm{H}^0(A)$ is called an \emph{associated
prime} of $M$ if $\mathrm{depth}_{A_{\bar{\mathfrak{p}}}}M_{\bar{\mathfrak{p}}}=\mathrm{inf}M_{\bar{\mathfrak{p}}}$. The set of associated primes of $M$ is denoted by $\mathrm{Ass}_AM$. The \emph{support} of $M$ is the set
$\mathrm{Supp}_AM:=\{\bar{\mathfrak{p}}\in \mathrm{Spec}\mathrm{H}^0(A)\hspace{0.03cm}|\hspace{0.03cm}M_{\bar{\mathfrak{p}}}\not\simeq0\}$.
Following \cite{sh20,YL}, we say that $M\in \mathrm{D}^{\mathrm{b}}_{\mathrm{f}}(A)$ has \emph{constant amplitude} if $\mathrm{amp}M_{\bar{\mathfrak{p}}}=\mathrm{amp}M$ for all $\bar{\mathfrak{p}}\in\mathrm{Supp}_AM$.

\vspace{1.5mm}
{\bf Sequence-regular DG-rings.} Let $(A,\bar{\mathfrak{m}},\bar{\kappa})$ be a local DG-ring and $M\in\mathrm{D}^+(A)$. An element $\bar{x}\in\bar{\mathfrak{m}}$ is called \emph{$M$-regular} if it is
$\mathrm{H}^{\mathrm{inf}M}(M)$-regular, i.e. the multiplication map
\begin{center}$\bar{x}:\mathrm{H}^{\mathrm{inf}M}(M)\rightarrow\mathrm{H}^{\mathrm{inf}M}(M)$\end{center}is injective. Inductively, a sequence $\bar{x}_1,\cdots,\bar{x}_n\in\bar{\mathfrak{m}}$ is \emph{$M$-regular} if
$\bar{x}_1$ is $M$-regular and the sequence $\bar{x}_2,\cdots,\bar{x}_n$ is $M/\hspace{-0.15cm}/\bar{x}_1M$-regular. Here, $M/\hspace{-0.15cm}/\bar{x}_1M$ is the
cone of the map $\bar{x}_1:M\rightarrow M$ in $\mathrm{D}(A)$.
For $M\in\mathrm{D}^\mathrm{b}_\mathrm{f}(A)$, $\mathrm{sup}M=\mathrm{sup}M/\hspace{-0.15cm}/\bar{x}M$ by Nakayama's lemma. By \cite[Lemma 2.13]{Mi19}, $\bar{x}$ is $M$-regular if and only if $\mathrm{amp}M=\mathrm{amp}M/\hspace{-0.15cm}/\bar{x}M$.
Following \cite{sh21}, a local DG-ring $(A,\bar{\mathfrak{m}},\bar{\kappa})$ is called  \emph{sequence-regular} if $\bar{\mathfrak{m}}$ is generated by an
$A$-regular sequence.

\vspace{1.5mm}The following lemma is frequently used through the article.

\begin{lem}\label{lem0.15} {\rm (\cite{Mi19,Y25,ya20}).} $(1)$ Let $M\in\mathrm{D}^+(A),N\in\mathrm{D}^\mathrm{b}_\mathrm{f}(A)$ with $\mathrm{projdim}_AN<\infty$, then
\begin{center}$\mathrm{depth}_A(M\otimes^\mathrm{L}_AN)=\mathrm{depth}_AM+\mathrm{depth}_AN-\mathrm{depth}A$.\end{center}

$(2)$ Let $M,N\in\mathrm{D}^{-}_{\mathrm{f}}(A)$ with $\mathrm{projdim}_AM<\infty$, one has an inequality
 \begin{center}$\mathrm{lc.dim}_AN\leq\mathrm{lc.dim}_A(M\otimes^\mathrm{L}_AN)+\mathrm{projdim}_AM$.\end{center}

$(3)$ Let $M\in\mathrm{D}^-_\mathrm{f}(A)$ with $\mathrm{projdim}_AM<\infty$,
then $\mathrm{projdim}_AM=\mathrm{depth}A-\mathrm{depth}_AM$.

$(4)$ Let $N\in\mathrm{D}^+_\mathrm{f}(A)$ with $\mathrm{injdim}_AN<\infty$, then
$\mathrm{injdim}_AN-\mathrm{sup}N=\mathrm{depth}A$.
\end{lem}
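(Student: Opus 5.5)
All four parts are DG analogues of classical formulas: the Auslander–Buchsbaum formula, the depth formula, the Bass formula, and a codimension-type inequality. My plan is to deduce them by reducing to statements about $\bar{\kappa}$-vector spaces, where amplitudes and extreme degrees add under tensor products. The common input is a structural fact: for $(A,\bar{\mathfrak m},\bar\kappa)$ local noetherian, every $N\in\mathrm{D}^-_\mathrm{f}(A)$ with $\mathrm{projdim}_AN<\infty$ is compact, i.e.\ lies in the thick subcategory generated by $A$. Moreover, $N$ admits a minimal semi-free resolution, and from it one reads off
\[
\mathrm{projdim}_AN=-\mathrm{inf}(\bar\kappa\otimes^\mathrm{L}_AN),\qquad \mathrm{sup}(\bar\kappa\otimes^\mathrm{L}_AN)=\mathrm{sup}N .
\]
The second equality is Nakayama. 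I would take both from \cite{BSSW} and \cite{Mi19}.

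For (1), I would use compactness of $N$ to obtain the tensor-evaluation isomorphism $\mathrm{RHom}_A(\bar\kappa,M\otimes^\mathrm{L}_AN)\simeq\mathrm{RHom}_A(\bar\kappa,M)\otimes^\mathrm{L}_AN$. The $A$-action on $\mathrm{RHom}_A(\bar\kappa,M)$ factors through $\bar\kappa$, so the right side is $\mathrm{RHom}_A(\bar\kappa,M)\otimes_{\bar\kappa}(\bar\kappa\otimes^\mathrm{L}_AN)$, a tensor product over a field. Taking infima gives
\[
\mathrm{depth}_A(M\otimes^\mathrm{L}_AN)=\mathrm{depth}_AM+\mathrm{inf}(\bar\kappa\otimes^\mathrm{L}_AN).
\]
Specializing to $M=A$ gives $\mathrm{depth}_AN=\mathrm{depth}A+\mathrm{inf}(\bar\kappa\otimes^\mathrm{L}_AN)$. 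Subtracting the two identities proves (1). The same specialization, combined with $\mathrm{projdim}_AN=-\mathrm{inf}(\bar\kappa\otimes^\mathrm{L}_AN)$, proves (3).

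For (2), I would use Shaul's identity $\mathrm{lc.dim}_AX=\mathrm{sup}\,\mathrm{R}\Gamma_{\bar{\mathfrak m}}(X)$. Since $M$ is compact, $\mathrm{R}\Gamma_{\bar{\mathfrak m}}(M\otimes^\mathrm{L}_AN)\simeq M\otimes^\mathrm{L}_A\mathrm{R}\Gamma_{\bar{\mathfrak m}}(N)$. For a bounded-above $X$ with $\bar{\mathfrak m}$-torsion cohomology, Nakayama for torsion modules gives $\mathrm{sup}(\bar\kappa\otimes^\mathrm{L}_AX)=\mathrm{sup}X$, because the top cohomology is torsion and therefore has nonzero socle-type quotient. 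Hence
\[
\mathrm{sup}(M\otimes^\mathrm{L}_AX)=\mathrm{sup}\bigl((\bar\kappa\otimes^\mathrm{L}_AM)\otimes_{\bar\kappa}(\bar\kappa\otimes^\mathrm{L}_AX)\bigr)=\mathrm{sup}M+\mathrm{sup}X .
\]
Finally, $\mathrm{sup}M\geq\mathrm{inf}M\geq-\mathrm{projdim}_AM$, which yields the inequality in (2).

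For (4), I would first show $\mathrm{injdim}_AN=-\mathrm{inf}\,\mathrm{RHom}_A(\bar\kappa,N)$ for $N\in\mathrm{D}^+_\mathrm{f}(A)$ of finite injective dimension. This is the dual, Bass-type statement, proved by testing the definition on $Y=\bar\kappa$ and then reducing arbitrary $Y$ through $\mathrm{H}^0(A)$-module filtrations. It then remains to relate $\mathrm{RHom}_A(\bar\kappa,N)$ to $\mathrm{depth}A$ and $\mathrm{sup}N$. I would compute
\[
\mathrm{RHom}_A(\mathrm{RHom}_A(\bar\kappa,A),N)\simeq \bar\kappa\otimes^\mathrm{L}_A N\ \text{-type expressions}
\]
via homothety and evaluation isomorphisms, which are valid because $N$ has finite injective dimension. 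This extracts the extreme degrees: the top degree of $\bar\kappa\otimes^\mathrm{L}_AN$ is $\mathrm{sup}N$, and the bottom degree of $\mathrm{RHom}_A(\bar\kappa,A)$ is $\mathrm{depth}A$.

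I expect (4) to be the main obstacle, in particular justifying the evaluation isomorphism in the DG setting. If that route stalls, I would instead reduce to a maximal $A$-regular sequence in $\bar{\mathfrak m}$ by passing to Koszul DG-modules, which drops $\mathrm{depth}A$ to $0$, and then check the depth-zero case directly. Since the paper cites (1)–(4) from \cite{Mi19,Y25,ya20}, I would ultimately defer the technical verifications there.
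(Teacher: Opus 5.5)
\textbf{Part (2) has a genuine gap.} The step that fails is ``$\sup(\bar\kappa\otimes^\mathrm{L}_AX)=\sup X$ for $X$ with $\bar{\mathfrak{m}}$-torsion cohomology''. A nonzero $\bar{\mathfrak{m}}$-torsion module $T$ has nonzero socle $\mathrm{Hom}(\bar\kappa,T)$, and that is what makes $\inf\mathrm{RHom}_A(\bar\kappa,X)=\inf X$ true. However, $\sup(\bar\kappa\otimes^\mathrm{L}_A-)$ detects the top $T/\bar{\mathfrak{m}}T$, and the top can vanish.

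Take $A=R$ a DVR and $E=K/R$. Then $\mathrm{R}\Gamma_{\bar{\mathfrak{m}}}(R)\simeq E[-1]$, and $E$ is divisible, so $\bar\kappa\otimes^\mathrm{L}_RE[-1]\simeq\bar\kappa$ has $\sup$ equal to $0$, not $1$. With $M=\bar\kappa$ (projective dimension $1$) and $N=R$, your intermediate identity $\mathrm{lc.dim}_A(M\otimes^\mathrm{L}_AN)=\sup M+\mathrm{lc.dim}_AN$ would give $\mathrm{lc.dim}_R\bar\kappa=1$, which is false.

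No K\"unneth-type argument can rescue this, because (2) contains a deep theorem. Let $R$ be an ordinary local ring and $M,N\neq0$ finitely generated with $\mathrm{projdim}_RM<\infty$ and $M\otimes_RN$ of finite length. Then every $\mathrm{Tor}^R_i(M,N)$ has finite length, so $\mathrm{lc.dim}_R(M\otimes^\mathrm{L}_RN)=0$. In that case (2) reads $\dim_RN\leq\mathrm{projdim}_RM$, which is the Intersection Theorem of Peskine--Szpiro and Roberts.

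So some form of the (New) Intersection Theorem has to be imported; that is what the citation \cite{Y25} supplies. Via $\mathrm{lc.dim}_AX=\sup\{\dim\mathrm{H}^0(A)/\bar{\mathfrak{p}}+\sup X_{\bar{\mathfrak{p}}}\}$, localizing and applying Nakayama only controls primes in $\mathrm{Supp}_AM\cap\mathrm{Supp}_AN$. The primes of $\mathrm{Supp}_AN$ outside $\mathrm{Supp}_AM$ are exactly where the intersection theorem is needed.

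\textbf{The other parts.} The paper gives no proof of this lemma and only cites \cite{Mi19,Y25,ya20}. Your sketch of (1) and (3) is sound. Tensor evaluation for the compact $N$, the lift of $\mathrm{RHom}_A(\bar\kappa,M)$ to $\mathrm{D}(\bar\kappa)$ along $A\to\mathrm{H}^0(A)\to\bar\kappa$, and K\"unneth over $\bar\kappa$ give $\mathrm{depth}_A(M\otimes^\mathrm{L}_AN)=\mathrm{depth}_AM+\inf(\bar\kappa\otimes^\mathrm{L}_AN)$, and both parts follow from this.

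There are two further slips.
\begin{enumerate}
\item In (2), $\inf M\geq-\mathrm{projdim}_AM$ is false over DG-rings; take $M=A$ with $\mathrm{amp}A>0$. The bound you actually need is $\sup M=\sup(\bar\kappa\otimes^\mathrm{L}_AM)\geq\inf(\bar\kappa\otimes^\mathrm{L}_AM)=-\mathrm{projdim}_AM$.
\item In (4), the Bass-type formula must read $\mathrm{injdim}_AN=\sup\mathrm{RHom}_A(\bar\kappa,N)$. Your expression $-\inf\mathrm{RHom}_A(\bar\kappa,N)$ equals $-\mathrm{depth}_AN$.
\end{enumerate}

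With the correction in (4), your evaluation idea works. One has $\bar\kappa\otimes^\mathrm{L}_AN\simeq\mathrm{RHom}_A(\mathrm{RHom}_A(\bar\kappa,A),N)\simeq\mathrm{RHom}_{\bar\kappa}(\mathrm{RHom}_A(\bar\kappa,A),\mathrm{RHom}_A(\bar\kappa,N))$. Taking $\sup$ and using Nakayama gives $\sup N=\sup\mathrm{RHom}_A(\bar\kappa,N)-\mathrm{depth}A$.

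Two points there still need real arguments. Hom-evaluation with the non-compact $\bar\kappa$ requires a way-out argument that uses $\mathrm{injdim}_AN<\infty$. Reducing the definition of injective dimension to the single test object $Y=\bar\kappa$ requires Bass's lemma, which pushes non-vanishing from $\mathrm{H}^0(A)/\bar{\mathfrak{p}}$ up to larger primes using that $N$ has finitely generated cohomology; filtrations alone do not do this.
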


\bigskip
\section{\bf Perfect and local Cohen-Macaulay DG-modules}
In this section, we introduce the notions of grade and perfection for DG-modules and establish their DG analogues of several classical results concerning perfect modules.
Furthermore, we study the Cohen-Macaulay property of tensor products of DG-modules and give an affirmative answer to the conjecture in the introduction.

\begin{df}\label{lem:2.1} {\rm (1) The \emph{grade} of $M\in \mathrm{D}^{\mathrm{b}}_{\mathrm{f}}(A)$,
$\mathrm{grade}_AM$, is defined as the formula $$\mathrm{grade}_AM:=\mathrm{inf}\mathrm{RHom}_A(M,A).$$

(2)   A DG-module $M\in\mathrm{D}^{\mathrm{b}}_{\mathrm{f}}(A)$ is called \emph{perfect} if $\mathrm{grade}_AM=\mathrm{projdim}_AM+\mathrm{inf}A$.}
\end{df}
By the proof of \cite[Lemma 2.3(1)]{Y25}, one has an equality
\begin{align}
\mathrm{grade}_AM=\mathrm{inf}\{\mathrm{depth}A_{\bar{\mathfrak{p}}}
-\mathrm{sup}M_{\bar{\mathfrak{p}}}\hspace{0.02cm}|\hspace{0.02cm}\bar{\mathfrak{p}}\in\mathrm{Supp}_AM\}.
\label{exact03}
\tag{$\ast$}\end{align}
The next lemma establish some inequalities concerning the grade of DG-modules.

\begin{lem}\label{lem2.4} Let $A$ be a local DG-ring and $M\in\mathrm{D}^{\mathrm{b}}_{\mathrm{f}}(A)$. Then

$(1)$ $\mathrm{grade}_AM\geq\mathrm{depth}A-\mathrm{lc.dim}_AM$.

$(2)$ If $A$ has constant amplitude, then $\mathrm{grade}_AM\leq\mathrm{lc.dim}A-\mathrm{lc.dim}_AM+\mathrm{inf}A$. In additional, if
$A$ is local Cohen-Macaulay, then $\mathrm{grade}_AM=\mathrm{depth}A-\mathrm{lc.dim}_AM$.

$(3)$ $\mathrm{grade}_AM\leq\mathrm{projdim}_AM$. In additional, if $A$ has constant amplitude, then $\mathrm{grade}_AM\leq\mathrm{projdim}_AM+\mathrm{inf}A$.
\end{lem}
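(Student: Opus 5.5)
All three parts will be derived from the formula (\ref{exact03}), i.e. $\mathrm{grade}_AM=\inf\{\mathrm{depth}A_{\bar{\mathfrak{p}}}-\sup M_{\bar{\mathfrak{p}}}\mid \bar{\mathfrak{p}}\in\mathrm{Supp}_AM\}$, together with Shaul's local inequalities for DG-rings. The needed ingredients are the following.
\begin{itemize}
\item The dimension inequality $\mathrm{depth}A_{\bar{\mathfrak{p}}}\leq \mathrm{lc.dim}A_{\bar{\mathfrak{p}}}$, plus the bound $\mathrm{depth}A\leq \mathrm{depth}A_{\bar{\mathfrak{p}}}+\dim \mathrm{H}^0(A)/\bar{\mathfrak{p}}$ (Shaul's localization of depth).
\item For $\mathrm{lc.dim}$: $\mathrm{lc.dim}_AM=\sup\{\dim\mathrm{H}^0(A)/\bar{\mathfrak{p}}+\sup M_{\bar{\mathfrak{p}}}\mid \bar{\mathfrak{p}}\in\mathrm{Supp}_AM\}$. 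This follows from the definition, since $\dim \mathrm{H}^\ell(M)=\sup\{\dim \mathrm{H}^0(A)/\bar{\mathfrak{p}}\mid \mathrm{H}^\ell(M)_{\bar{\mathfrak{p}}}\neq0\}$.
\end{itemize}

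\emph{Part (1).} For $\bar{\mathfrak{p}}\in\mathrm{Supp}_AM$ we have
$$\mathrm{depth}A_{\bar{\mathfrak{p}}}-\sup M_{\bar{\mathfrak{p}}}\geq \mathrm{depth}A-\dim\mathrm{H}^0(A)/\bar{\mathfrak{p}}-\sup M_{\bar{\mathfrak{p}}}\geq \mathrm{depth}A-\mathrm{lc.dim}_AM.$$
Taking the infimum over $\mathrm{Supp}_AM$ gives (1).

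\emph{Part (2).} Choose $\bar{\mathfrak{p}}$ minimal in $\mathrm{Supp}_AM$ with $\dim\mathrm{H}^0(A)/\bar{\mathfrak{p}}+\sup M_{\bar{\mathfrak{p}}}=\mathrm{lc.dim}_AM$; such a prime exists because the supremum is attained at some $\ell$ and some minimal prime of $\mathrm{H}^\ell(M)$. Then
$$\mathrm{grade}_AM\leq \mathrm{depth}A_{\bar{\mathfrak{p}}}-\sup M_{\bar{\mathfrak{p}}}\leq \mathrm{lc.dim}A_{\bar{\mathfrak{p}}}-\mathrm{lc.dim}_AM+\dim \mathrm{H}^0(A)/\bar{\mathfrak{p}}.$$
It remains to show $\mathrm{lc.dim}A_{\bar{\mathfrak{p}}}+\dim\mathrm{H}^0(A)/\bar{\mathfrak{p}}\leq \mathrm{lc.dim}A+\inf A$. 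Constant amplitude gives $\sup A_{\bar{\mathfrak{p}}}=0$ and $\inf A_{\bar{\mathfrak{p}}}=\inf A$. Then $\mathrm{lc.dim}A_{\bar{\mathfrak{p}}}=\sup_\ell(\dim \mathrm{H}^\ell(A)_{\bar{\mathfrak{p}}}+\ell)$. Bound each term using $\dim \mathrm{H}^\ell(A)_{\bar{\mathfrak{p}}}\leq \dim\mathrm{H}^0(A)_{\bar{\mathfrak{p}}}$ and $\ell\leq 0$, and use $\mathrm{H}^{\inf A}(A)_{\bar{\mathfrak{p}}}\neq0$ with $\dim\mathrm{H}^{\inf A}(A)\geq \dim \mathrm{H}^{\inf A}(A)_{\bar{\mathfrak{p}}}+\dim\mathrm{H}^0(A)/\bar{\mathfrak{p}}$.

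This bookkeeping is the delicate step, and I expect it to be the main obstacle. The correct shift by $\inf A$ may instead require Shaul's result that $\mathrm{lc.dim}A=\dim\mathrm{H}^0(A)$ (or $\mathrm{lc.dim}A_{\bar{\mathfrak{p}}}\leq \dim \mathrm{H}^0(A)_{\bar{\mathfrak{p}}}$) under constant amplitude, combined with $\mathrm{depth}A_{\bar{\mathfrak{p}}}\leq \mathrm{lc.dim}A_{\bar{\mathfrak{p}}}+\inf A$, i.e. $\mathrm{amp}\,\mathrm{R}\Gamma\geq \mathrm{amp}A$ at $\bar{\mathfrak{p}}$. I would try the latter route first, since $\mathrm{lc.dim}-\mathrm{depth}\geq \mathrm{amp}A_{\bar{\mathfrak{p}}}=-\inf A$ is exactly Shaul's amplitude inequality.

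In the Cohen-Macaulay case $\mathrm{lc.dim}A-\mathrm{depth}A=\mathrm{amp}A=-\inf A$. The upper bound in (2) then equals $\mathrm{depth}A-\mathrm{lc.dim}_AM$, which combined with (1) gives equality.

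\emph{Part (3).} If $\mathrm{projdim}_AM=\infty$ there is nothing to prove; otherwise Lemma \ref{lem0.15}(3) gives $\mathrm{projdim}_AM=\mathrm{depth}A-\mathrm{depth}_AM$. Take $\bar{\mathfrak{p}}=\bar{\mathfrak{m}}\in\mathrm{Supp}_AM$ in (\ref{exact03}), which gives $\mathrm{grade}_AM\leq \mathrm{depth}A-\sup M$. Since $\mathrm{depth}_AM\leq \sup M$, we get $\mathrm{grade}_AM\leq\mathrm{projdim}_AM$. For the sharper bound under constant amplitude, use $\mathrm{depth}_AM\leq \mathrm{lc.dim}_AM$ and (2):
$$\mathrm{grade}_AM\leq \mathrm{lc.dim}A+\inf A-\mathrm{lc.dim}_AM\leq \mathrm{lc.dim}A+\inf A-\mathrm{depth}_AM.$$
Now apply Lemma \ref{lem0.15}(2) with $N=A$: this gives $\mathrm{lc.dim}A\leq \mathrm{lc.dim}_AM+\mathrm{projdim}_AM$. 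Combining with the first inequality of the display yields $\mathrm{grade}_AM\leq \mathrm{projdim}_AM+\inf A$.
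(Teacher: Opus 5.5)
\textbf{The genuine gap is in the first claim of (3).} Taking $\bar{\mathfrak{m}}$ in $(\ast)$ gives $\mathrm{grade}_AM\leq\mathrm{depth}A-\mathrm{sup}M$. To reach $\mathrm{depth}A-\mathrm{depth}_AM$ you then invoke $\mathrm{depth}_AM\leq\mathrm{sup}M$, and that inequality is false. In the cohomological convention used here one has $\mathrm{inf}M\leq\mathrm{depth}_AM\leq\mathrm{lc.dim}_AM$, and depth routinely exceeds $\mathrm{sup}M$. For example, take $A$ an ordinary regular local ring of dimension $d\geq1$ and $M=A$; your chain then reads $0=\mathrm{grade}_AA\leq d\leq\mathrm{projdim}_AA=0$. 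The bound coming from $\bar{\mathfrak{m}}$ is simply too weak, so this route cannot be rescued.

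The missing observation is elementary. $A\in\mathrm{D}^{\mathrm{b}}(A)$ with $\mathrm{sup}A=0$ is an admissible test object in the definition of projective dimension, so $\mathrm{H}^i(\mathrm{RHom}_A(M,A))=0$ for $i>\mathrm{projdim}_AM$. Hence $\mathrm{grade}_AM=\mathrm{inf}\mathrm{RHom}_A(M,A)\leq\mathrm{sup}\mathrm{RHom}_A(M,A)\leq\mathrm{projdim}_AM$. Here $\mathrm{RHom}_A(M,A)\not\simeq0$ for $M\not\simeq0$, since $(\ast)$ makes the grade finite. This is the content of \cite[Theorem 3.4]{ya20}, which the paper simply cites. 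Your constant-amplitude bound would imply the first claim because $\mathrm{inf}A\leq0$, but the first claim is asserted without that hypothesis.

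\textbf{The rest is fine, modulo one correction.} Part (1) is the paper's argument.

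In part (2), the route you write out first cannot work. After using $\mathrm{depth}A_{\bar{\mathfrak{p}}}\leq\mathrm{lc.dim}A_{\bar{\mathfrak{p}}}$ you would need $\mathrm{lc.dim}A_{\bar{\mathfrak{p}}}+\mathrm{dim}\mathrm{H}^0(A)/\bar{\mathfrak{p}}\leq\mathrm{lc.dim}A+\mathrm{inf}A$. This fails whenever $\mathrm{inf}A<0$: take $M=\bar{\kappa}$, which forces $\bar{\mathfrak{p}}=\bar{\mathfrak{m}}$. The fallback you name is exactly the paper's proof via \cite[Corollary 5.5 and Lemma 5.10]{Shaul}, so commit to it. It has three steps:
\begin{enumerate}
\item Shaul's amplitude inequality $\mathrm{depth}A_{\bar{\mathfrak{p}}}\leq\mathrm{lc.dim}A_{\bar{\mathfrak{p}}}+\mathrm{inf}A_{\bar{\mathfrak{p}}}$;
\item $\mathrm{inf}A_{\bar{\mathfrak{p}}}=\mathrm{inf}A$ by constant amplitude;
\item $\mathrm{lc.dim}A_{\bar{\mathfrak{p}}}+\mathrm{dim}\mathrm{H}^0(A)/\bar{\mathfrak{p}}\leq\mathrm{lc.dim}A$, which holds in general since $\mathrm{lc.dim}=\mathrm{dim}\mathrm{H}^0$ for non-positive DG-rings.
\end{enumerate}

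For the constant-amplitude half of (3), your argument is correct but differs from the paper's. You obtain $\mathrm{lc.dim}A-\mathrm{lc.dim}_AM\leq\mathrm{projdim}_AM$ directly from Lemma~\ref{lem0.15}(2) with $N=A$. The paper instead uses \cite[Corollary 3.3]{Y25}, i.e.\ $\mathrm{lc.dim}A-\mathrm{lc.dim}_AM\leq\mathrm{depth}A-\mathrm{depth}_AM$, together with the Auslander--Buchsbaum formula. Your version needs only the preliminary lemma. The intermediate step through $\mathrm{depth}_AM\leq\mathrm{lc.dim}_AM$ in your display is superfluous.
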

\begin{proof} (1) By the equality $(\ast)$, choose $\bar{\mathfrak{p}}\in\mathrm{Spec}\mathrm{H}^0(A)$ such that $\mathrm{grade}_AM=\mathrm{depth}A_{\bar{\mathfrak{p}}}
-\mathrm{sup}M_{\bar{\mathfrak{p}}}$. By  \cite[Lemma 2.26]{Mi19}, $\mathrm{depth}A\leq\mathrm{depth}A_{\bar{\mathfrak{p}}}+\mathrm{dim}\mathrm{H}^0(A)/\bar{\mathfrak{p}}$. By the equality (2.14) in \cite[Proposition 2.13]{Shaul}, $\mathrm{lc.dim}_AM\geq\mathrm{dim}\mathrm{H}^0(A)/\bar{\mathfrak{p}}+
\mathrm{sup}M_{\bar{\mathfrak{p}}}$. Thus $\mathrm{depth}A-\mathrm{lc.dim}_AM\leq\mathrm{depth}A_{\bar{\mathfrak{p}}}+\mathrm{dim}\mathrm{H}^0(A)/\bar{\mathfrak{p}}-(\mathrm{dim}\mathrm{H}^0(A)/\bar{\mathfrak{p}}+
\mathrm{sup}M_{\bar{\mathfrak{p}}})=\mathrm{grade}_AM$.

(2) By \cite[Equality (2.14)]{Shaul}, choose $\bar{\mathfrak{q}}\in\mathrm{Spec}\mathrm{H}^0(A)$ such that $\mathrm{lc.dim}_AM=\mathrm{dim}\mathrm{H}^0(A)/\bar{\mathfrak{q}}+
\mathrm{sup}M_{\bar{\mathfrak{q}}}$. One has the following inequalities: \begin{center}$\begin{aligned}\mathrm{grade}_AM
&\leq\mathrm{depth}A_{\bar{\mathfrak{q}}}-\mathrm{sup}M_{\bar{\mathfrak{q}}}\\
&\leq\mathrm{lc.dim}A_{\bar{\mathfrak{q}}}+\mathrm{dim}\mathrm{H}^0(A)/\bar{\mathfrak{q}}
+\mathrm{inf}A_{\bar{\mathfrak{q}}}-\mathrm{dim}\mathrm{H}^0(A)/\bar{\mathfrak{q}}-\mathrm{sup}M_{\bar{\mathfrak{q}}}\\
&\leq\mathrm{lc.dim}A+\mathrm{inf}A-\mathrm{lc.dim}_AM,\end{aligned}$\end{center}where the first one is by the equality $(\ast)$, the second one is by \cite[Corollary 5.5]{Shaul} and the third one is by \cite[Lemma 5.10]{Shaul} and $\mathrm{inf}A_{\bar{\mathfrak{q}}}=\mathrm{inf}A$.

(3) By \cite[Theorem 3.4]{ya20}, one has $\mathrm{grade}_AM\leq\mathrm{projdim}_AM$. Assume that $A$ has constant amplitude. We may assume that $\mathrm{projdim}_AM<\infty$, it follows by \cite[Corollary 3.3]{Y25} and Lemma \ref{lem0.15}(3) that $\mathrm{lc.dim}A-\mathrm{lc.dim}_AM\leq\mathrm{depth}A-\mathrm{depth}_AM=\mathrm{projdim}_AM$, the desired inequality follows by (2).
\end{proof}

 \begin{rem}\label{lem:2.2}{\rm (1) If $A$ is an ordinary ring and $M$ is a finitely generated $A$-module, then this definition coincides with the usual definition of grade of $M$ defined by Rees \cite{R}.

 (2) If $M$ is perfect, then $\mathrm{ampRHom}_A(M,A)=\mathrm{projdim}_AM-\mathrm{grade}_AM=\mathrm{amp}A$ by \cite[Proposition 4.4(3)]{ya20}.

 (3) If $A$ is local Gorenstein and $M\in\mathrm{D}^{\mathrm{b}}_{\mathrm{f}}(A)$ with $\mathrm{amp}M=\mathrm{amp}A$, then $M$ is perfect if and only if $M$ is local Cohen-Macaulay by (2) and \cite[Proposition 6.2]{Shaul}.

 (4) If $A$ has constant amplitude and $M$ is local Cohen-Macaulay with $\mathrm{projdim}_AM<\infty$, then $\mathrm{projdim}_AM+\mathrm{inf}A=\mathrm{depth}A-\mathrm{depth}_AM+\mathrm{inf}A=\mathrm{depth}A-\mathrm{lc.dim}_AM\leq\mathrm{grade}_AM\leq\mathrm{projdim}_AM+\mathrm{inf}A$ by Lemma \ref{lem2.4}, it implies that $M$ is perfect.}
\end{rem}

Following \cite{FIJ}, a \emph{dualizing DG-module} $R$ over a DG-ring $A$ is a DG-module $R\in\mathrm{D}^{+}_{\mathrm{f}}(A)$ such that $\mathrm{injdim}_AR<\infty$ and the natural map $A\rightarrow\mathrm{RHom}_A(R,R)$ is an isomorphism in $\mathrm{D}(A)$. We say that $R$ is \emph{normalized} if $\mathrm{inf}R=-\mathrm{lc.dim}A$. The
following result provides a Cohen-Macaulay criterion.

\begin{thm}\label{lem2.5} Let $A$ be a local Cohen-Macaulay DG-ring with constant amplitude and $M\in\mathrm{D}^{\mathrm{b}}_{\mathrm{f}}(A)$ with $\mathrm{projdim}_AM<\infty$. The following are equivalent:

$(1)$ $M$ is local Cohen-Macaulay;

 $(2)$ $M$ is perfect and $\mathrm{amp}M\leq\mathrm{amp}\mathrm{R}\Gamma_{\bar{\mathfrak{m}}}(M)$.\\
 In additional, if $A$ has a dualizing DG-module $R$ then the above conditions are equivalent to

 $(3)$ $M$ is perfect and $\mathrm{RHom}_A(M,A)$ is local Cohen-Macaulay.
\end{thm}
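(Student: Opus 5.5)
The plan is to reduce everything to numerical identities among $\mathrm{depth}$, $\mathrm{lc.dim}$ and $\mathrm{amp}$, using the fact that over a local Cohen-Macaulay DG-ring with constant amplitude the grade is computed exactly. By Lemma \ref{lem2.4}(2) we have $\mathrm{grade}_AM=\mathrm{depth}A-\mathrm{lc.dim}_AM$. Since $\mathrm{projdim}_AM<\infty$, Lemma \ref{lem0.15}(3) gives $\mathrm{projdim}_AM=\mathrm{depth}A-\mathrm{depth}_AM$. Hence $M$ is perfect if and only if
$$\mathrm{lc.dim}_AM-\mathrm{depth}_AM=-\mathrm{inf}A=\mathrm{amp}A,$$
using $\mathrm{sup}A=0$. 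Since $\mathrm{amp}\,\mathrm{R}\Gamma_{\bar{\mathfrak{m}}}(M)=\mathrm{lc.dim}_AM-\mathrm{depth}_AM$ by \cite[Theorem 2.15, Proposition 3.3]{Shaul}, this yields the key reformulation:
\begin{center}$M$ is perfect if and only if $\mathrm{amp}\,\mathrm{R}\Gamma_{\bar{\mathfrak{m}}}(M)=\mathrm{amp}A$.\end{center}

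For $(1)\Rightarrow(2)$, perfectness is exactly Remark \ref{lem:2.2}(4). The inequality $\mathrm{amp}M\leq\mathrm{amp}\,\mathrm{R}\Gamma_{\bar{\mathfrak{m}}}(M)$ is immediate, since both sides equal $\mathrm{amp}A$.

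For $(2)\Rightarrow(1)$, the reformulation gives $\mathrm{amp}\,\mathrm{R}\Gamma_{\bar{\mathfrak{m}}}(M)=\mathrm{amp}A$, so $\mathrm{amp}M\leq\mathrm{amp}A$. For the reverse inequality I would invoke the DG amplitude inequality (Jørgensen's theorem): a nonzero $M\in\mathrm{D}^{\mathrm{b}}_{\mathrm{f}}(A)$ of finite projective dimension over a local noetherian DG-ring satisfies $\mathrm{amp}M\geq\mathrm{amp}A$. This forces $\mathrm{amp}M=\mathrm{amp}A=\mathrm{amp}\,\mathrm{R}\Gamma_{\bar{\mathfrak{m}}}(M)$, which is the definition of local Cohen-Macaulay. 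If citing that inequality is not available, my fallback is to derive $\mathrm{amp}M\geq\mathrm{amp}A$ from $M\simeq\mathrm{RHom}_A(\mathrm{RHom}_A(M,A),A)$ (valid since $\mathrm{projdim}_AM<\infty$) together with Remark \ref{lem:2.2}(2).

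For the equivalence with $(3)$, set $M^\ast=\mathrm{RHom}_A(M,A)$. Then $M^\ast\in\mathrm{D}^{\mathrm{b}}_{\mathrm{f}}(A)$ has finite projective dimension, $M\simeq\mathrm{RHom}_A(M^\ast,A)$, and when $M$ is perfect $\mathrm{amp}M^\ast=\mathrm{amp}A$ by Remark \ref{lem:2.2}(2). By the $(1)\Leftrightarrow(2)$ part applied to $M^\ast$, it suffices to show that $M$ perfect implies $\mathrm{amp}\,\mathrm{R}\Gamma_{\bar{\mathfrak{m}}}(M^\ast)=\mathrm{amp}A$, and that conversely the conditions in (3) return the conditions in (2) for $M$.

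To compute $\mathrm{lc.dim}_AM^\ast-\mathrm{depth}_AM^\ast$, I would use the normalized dualizing DG-module $R$ and local duality. By local duality, $\mathrm{R}\Gamma_{\bar{\mathfrak{m}}}(M^\ast)$ is Matlis dual to $\mathrm{RHom}_A(M^\ast,R)\simeq M\otimes^{\mathrm{L}}_AR$, where the isomorphism uses $\mathrm{projdim}_AM<\infty$. This gives
$$\mathrm{amp}\,\mathrm{R}\Gamma_{\bar{\mathfrak{m}}}(M^\ast)=\mathrm{amp}(M\otimes^{\mathrm{L}}_AR).$$
The right-hand side can be controlled by Lemma \ref{lem0.15}(1) for depth and Lemma \ref{lem0.15}(2) for $\mathrm{lc.dim}$, together with $\mathrm{depth}_AR=\mathrm{lc.dim}A+\mathrm{inf}R$-type identities for the normalized $R$, using that $A$ is local Cohen-Macaulay so that $R$ is itself local Cohen-Macaulay of amplitude $\mathrm{amp}A$.

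I expect this duality bookkeeping to be the main obstacle. The task is to get exact values, not merely inequalities, for $\mathrm{sup}$ and $\mathrm{inf}$ of $M\otimes^{\mathrm{L}}_AR$ in terms of $\mathrm{grade}_AM$ and $\mathrm{projdim}_AM$. The target identity is $\mathrm{amp}(M\otimes^{\mathrm{L}}_AR)=\mathrm{projdim}_AM-\mathrm{grade}_AM$, which equals $\mathrm{amp}A$ exactly when $M$ is perfect; I would try to prove it first. The converse direction, $(3)\Rightarrow(2)$, then follows by symmetry, swapping the roles of $M$ and $M^\ast$ via biduality.
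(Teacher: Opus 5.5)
Your argument for $(1)\Leftrightarrow(2)$ is the paper's. Lemma~\ref{lem2.4}(2) and Lemma~\ref{lem0.15}(3) turn perfectness into $\mathrm{amp}\,\mathrm{R}\Gamma_{\bar{\mathfrak{m}}}(M)=-\mathrm{inf}A$, and the amplitude inequality $\mathrm{amp}M\geq-\mathrm{inf}A$ (the paper cites \cite[Proposition 4.1]{Y25}) closes the loop. Your fallback for that inequality is not right as stated, because Remark~\ref{lem:2.2}(2) computes $\mathrm{amp}M^\ast$, not $\mathrm{amp}M$. To get $\mathrm{amp}M$, apply the general equality $\mathrm{amp}\,\mathrm{RHom}_A(X,A)=\mathrm{projdim}_AX-\mathrm{grade}_AX$ to $X=M^\ast$, then Lemma~\ref{lem2.4}(3) to $M^\ast$.

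\textbf{The gap is in the part involving (3).} You reduce to showing that perfectness of $M$ forces $\mathrm{amp}\,\mathrm{R}\Gamma_{\bar{\mathfrak{m}}}(M^\ast)=\mathrm{amp}A$. Your target identity for this is $\mathrm{amp}(M\otimes^{\mathrm{L}}_AR)=\mathrm{projdim}_AM-\mathrm{grade}_AM$, which equals $\mathrm{amp}M^\ast$. Both statements are false. If they held, every perfect $M$ would satisfy (3), hence (1), and the amplitude hypothesis in (2) would be superfluous.

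Here is a counterexample. Let $A=k[[x,y]]$, $N=A/(x^2,xy)$ and $M=\mathrm{RHom}_A(N,A)$. Then $\mathrm{H}^1(M)\cong A/(x)$, $\mathrm{H}^2(M)\cong k$, and $M^\ast\simeq N$.
\begin{itemize}
\item $\mathrm{grade}_AM=\mathrm{inf}N=0=\mathrm{sup}N=\mathrm{projdim}_AM$, so $M$ is perfect.
\item With $R=A[2]$, $\mathrm{amp}(M\otimes^{\mathrm{L}}_AR)=\mathrm{amp}M=1$, whereas $\mathrm{projdim}_AM-\mathrm{grade}_AM=0$.
\item $\mathrm{amp}\,\mathrm{R}\Gamma_{\bar{\mathfrak{m}}}(N)=\mathrm{dim}N-\mathrm{depth}N=1\neq 0$.
\end{itemize}
Relatedly, Lemma~\ref{lem0.15}(1),(2) are the wrong tools here. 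They control depth and $\mathrm{lc.dim}$, i.e.\ the $\mathrm{inf}$ and $\mathrm{sup}$ of $\mathrm{R}\Gamma_{\bar{\mathfrak{m}}}(M\otimes^{\mathrm{L}}_AR)$. They say nothing about the cohomological $\mathrm{inf}$ and $\mathrm{sup}$ of $M\otimes^{\mathrm{L}}_AR$ itself, which is what $\mathrm{amp}(M\otimes^{\mathrm{L}}_AR)$ needs.

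Your local duality step $\mathrm{amp}\,\mathrm{R}\Gamma_{\bar{\mathfrak{m}}}(M^\ast)=\mathrm{amp}(M\otimes^{\mathrm{L}}_AR)$ agrees with the paper. The missing idea is the correct identity $\mathrm{amp}(M\otimes^{\mathrm{L}}_AR)=\mathrm{amp}M$, expressed through $\mathrm{sup}M$ and $\mathrm{inf}M$ rather than grade and projective dimension. The paper proves it in two halves.
\begin{itemize}
\item $\mathrm{sup}(M\otimes^{\mathrm{L}}_AR)=\mathrm{sup}M+\mathrm{sup}R$ comes from \cite[Lemma 2.3(3)]{Y25}.
\item $\mathrm{inf}(M\otimes^{\mathrm{L}}_AR)=\mathrm{inf}M+\mathrm{sup}R$ comes from $M\simeq\mathrm{RHom}_A(R,M\otimes^{\mathrm{L}}_AR)$, which uses finite projective dimension and $A\simeq\mathrm{RHom}_A(R,R)$. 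The lowest candidate cohomology of the right-hand side is $\mathrm{Hom}_{\mathrm{H}^0(A)}(\mathrm{H}^{\mathrm{sup}R}(R),\mathrm{H}^{\mathrm{inf}(M\otimes^{\mathrm{L}}_AR)}(M\otimes^{\mathrm{L}}_AR))$. It is nonzero because $R$ has constant amplitude, via $\mathrm{Ass}\,\mathrm{Hom}(U,V)=\mathrm{Supp}\,U\cap\mathrm{Ass}\,V$.
\end{itemize}
With this identity both directions are immediate.
\begin{itemize}
\item $(1)\Rightarrow(3)$: perfectness gives $\mathrm{amp}M^\ast=\mathrm{amp}A$, and $\mathrm{amp}\,\mathrm{R}\Gamma_{\bar{\mathfrak{m}}}(M^\ast)=\mathrm{amp}M=\mathrm{amp}A$.
\item $(3)\Rightarrow(1)$: $\mathrm{amp}M=\mathrm{amp}\,\mathrm{R}\Gamma_{\bar{\mathfrak{m}}}(M^\ast)=\mathrm{amp}A$, and your reformulation gives $\mathrm{amp}\,\mathrm{R}\Gamma_{\bar{\mathfrak{m}}}(M)=\mathrm{amp}A$ from perfectness. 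Alternatively, your biduality symmetry works once $(1)\Rightarrow(3)$ is proved correctly.
\end{itemize}
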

\begin{proof} (1) $\Rightarrow$ (2) This follows by  Remark \ref{lem:2.2}(4).

(2) $\Rightarrow$ (1) By Lemma \ref{lem2.4}(2), $\mathrm{grade}_AM=\mathrm{depth}A-\mathrm{lc.dim}_AM$. By Lemma \ref{lem0.15}(3), $\mathrm{projdim}_AM=\mathrm{depth}A-\mathrm{depth}_AM$, it follows by \cite[Proposition 4.1]{Y25} that
$-\mathrm{inf}A\leq\mathrm{amp}M\leq\mathrm{amp}\mathrm{R}\Gamma_{\bar{\mathfrak{m}}}(M)=\mathrm{projdim}_AM-\mathrm{grade}_AM=-\mathrm{inf}A$. Thus $M$ is local Cohen-Macaulay.

We may assume that $R$ is normalized. As $A$ has constant amplitude, so is  $R$ by \cite[Theorem 4.1(2) and Proposition 4.4]{Shaul}, it follows by \cite[Page138]{B} that \begin{center}$\mathrm{Ass}_{\mathrm{H}^0(A)}\mathrm{Hom}_{\mathrm{H}^0(A)}(\mathrm{H}^{\mathrm{sup}R}(R),\mathrm{H}^{\mathrm{inf}(M\otimes_A^\mathrm{L}R)}(M\otimes_A^\mathrm{L}R))=
\mathrm{Supp}_{\mathrm{H}^0(A)}\mathrm{H}^{\mathrm{sup}R}(R)\cap\mathrm{Ass}_{\mathrm{H}^0(A)}\mathrm{H}^{\mathrm{inf}(M\otimes_A^\mathrm{L}R)}(M\otimes_A^\mathrm{L}R)\neq\emptyset$.\end{center}
Hence
 $\mathrm{H}^{\mathrm{inf}(M\otimes_A^\mathrm{L}R)-\mathrm{sup}R}(\mathrm{RHom}_A(R,M\otimes_A^\mathrm{L}R))=
\mathrm{Hom}_{\mathrm{H}^0(A)}(\mathrm{H}^{\mathrm{sup}R}(R),\mathrm{H}^{\mathrm{inf}(M\otimes_A^\mathrm{L}R)}(M\otimes_A^\mathrm{L}R))\neq0$ by \cite[Lemma 3.2]{BSSW}. Note that $M\simeq\mathrm{RHom}_A(R,M\otimes_A^\mathrm{L}R)$ by \cite[Lemma 2.7]{BSSW}, so $\mathrm{inf}(M\otimes_A^\mathrm{L}R)=\mathrm{inf}M+\mathrm{sup}R$.
 As $\mathrm{RHom}_A(\mathrm{RHom}_A(M,A),R)\simeq M\otimes_A^\mathrm{L}R$ by  \cite[Lemma 2.7]{BSSW}, it follows by \cite[Theorem 7.26]{s18} and  \cite[Lemma 2.3(3)]{Y25} that $$\mathrm{amp}\mathrm{R}\Gamma_{\bar{\mathfrak{m}}}(\mathrm{RHom}_A(M,A))=\mathrm{amp}\mathrm{RHom}_A(\mathrm{RHom}_A(M,A),R)=\mathrm{amp}(M\otimes_A^\mathrm{L}R)=\mathrm{amp}M.$$

 (1) $\Rightarrow$ (3) As $M$ is local Cohen-Macaulay, $M$ is perfect by Remark \ref{lem:2.2}(4), and so $\mathrm{amp}A=\mathrm{ampRHom}_A(M,A)$ by Remark \ref{lem:2.2}(2). Thus $\mathrm{RHom}_A(M,A)$ is local Cohen-Macaulay.

(3) $\Rightarrow$ (1) As $\mathrm{RHom}_A(M,A)$ is local Cohen-Macaulay,  $\mathrm{amp}A=\mathrm{ampRHom}_A(M,A)=\mathrm{amp}\mathrm{R}\Gamma_{\bar{\mathfrak{m}}}(\mathrm{RHom}_A(M,A))=\mathrm{amp}M$. Also $\mathrm{amp}A=\mathrm{projdim}_AM-\mathrm{grade}_AM=\mathrm{amp}\mathrm{R}\Gamma_{\bar{\mathfrak{m}}}(M)$, it implies that $M$ is local Cohen-Macaulay.
\end{proof}

The next proposition provides a partial answer to the Codimension Conjecture proposed by Auslander.

\begin{prop}\label{lem3.1} Let $A$ be a local DG-ring  with constant amplitude and $M\in\mathrm{D}^{\mathrm{b}}_{\mathrm{f}}(A)$ with $\mathrm{projdim}_AM<\infty$. Then
$M$ is perfect if and only if $\mathrm{lc.dim}_AM+\mathrm{grade}_AM=\mathrm{lc.dim}A+\mathrm{inf}A$ and $\mathrm{ampR}\Gamma_{\bar{\mathfrak{m}}}(M)=\mathrm{ampR}\Gamma_{\bar{\mathfrak{m}}}(A)$.
\end{prop}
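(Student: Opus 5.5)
Throughout we use two standard facts: Lemma \ref{lem0.15}(3), $\mathrm{projdim}_AM=\mathrm{depth}A-\mathrm{depth}_AM$, and the description of $\mathrm{R}\Gamma_{\bar{\mathfrak{m}}}$ amplitudes via $\mathrm{amp}\mathrm{R}\Gamma_{\bar{\mathfrak{m}}}(X)=\mathrm{lc.dim}_AX-\mathrm{depth}_AX$ for $X\in\mathrm{D}^{\mathrm{b}}_{\mathrm{f}}(A)$ (\cite[Theorem 2.15, Proposition 3.3]{Shaul}). With these, both conditions on the right-hand side become linear relations among $\mathrm{depth}A$, $\mathrm{lc.dim}A$, $\mathrm{depth}_AM$, $\mathrm{lc.dim}_AM$, $\mathrm{grade}_AM$ and $\mathrm{inf}A$. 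The proof should then be a bookkeeping argument, using the inequalities of Lemma \ref{lem2.4} to force equalities.

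For ($\Leftarrow$), assume $\mathrm{lc.dim}_AM+\mathrm{grade}_AM=\mathrm{lc.dim}A+\mathrm{inf}A$ and $\mathrm{lc.dim}_AM-\mathrm{depth}_AM=\mathrm{lc.dim}A-\mathrm{depth}A$. Subtracting the second from the first gives
\[
\mathrm{grade}_AM=\mathrm{depth}A-\mathrm{depth}_AM+\mathrm{inf}A=\mathrm{projdim}_AM+\mathrm{inf}A,
\]
so $M$ is perfect.

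For ($\Rightarrow$), assume $\mathrm{grade}_AM=\mathrm{projdim}_AM+\mathrm{inf}A=\mathrm{depth}A-\mathrm{depth}_AM+\mathrm{inf}A$. I would sandwich this using two inequalities.
\begin{itemize}
\item Lemma \ref{lem2.4}(2), which needs constant amplitude of $A$, gives $\mathrm{grade}_AM\leq\mathrm{lc.dim}A-\mathrm{lc.dim}_AM+\mathrm{inf}A$.
\item \cite[Corollary 3.3]{Y25}, exactly as used in the proof of Lemma \ref{lem2.4}(3), gives $\mathrm{lc.dim}A-\mathrm{lc.dim}_AM\leq\mathrm{depth}A-\mathrm{depth}_AM$.
\end{itemize}
Together with perfectness these yield the chain
\[
\mathrm{depth}A-\mathrm{depth}_AM+\mathrm{inf}A=\mathrm{grade}_AM\leq\mathrm{lc.dim}A-\mathrm{lc.dim}_AM+\mathrm{inf}A\leq\mathrm{depth}A-\mathrm{depth}_AM+\mathrm{inf}A.
\]
So both inequalities are equalities. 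The first equality is the codimension formula $\mathrm{lc.dim}_AM+\mathrm{grade}_AM=\mathrm{lc.dim}A+\mathrm{inf}A$. The second is $\mathrm{lc.dim}_AM-\mathrm{depth}_AM=\mathrm{lc.dim}A-\mathrm{depth}A$, which is $\mathrm{amp}\mathrm{R}\Gamma_{\bar{\mathfrak{m}}}(M)=\mathrm{amp}\mathrm{R}\Gamma_{\bar{\mathfrak{m}}}(A)$.

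The main point to check is the amplitude identity $\mathrm{amp}\mathrm{R}\Gamma_{\bar{\mathfrak{m}}}(X)=\mathrm{lc.dim}_AX-\mathrm{depth}_AX$ for both $X=M$ and $X=A$. This should follow from $\sup\mathrm{R}\Gamma_{\bar{\mathfrak{m}}}(X)=\mathrm{lc.dim}_AX$ and $\inf\mathrm{R}\Gamma_{\bar{\mathfrak{m}}}(X)=\mathrm{depth}_AX$ in \cite{Shaul}, valid for $X\in\mathrm{D}^{\mathrm{b}}_{\mathrm{f}}(A)$. One must also confirm that \cite[Corollary 3.3]{Y25} applies here, i.e. under finite projective dimension without a Cohen–Macaulay hypothesis on $A$; this matches how it was used in Lemma \ref{lem2.4}(3).
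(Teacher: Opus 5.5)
Your proposal is correct and is essentially the paper's proof: the paper writes the single chain $\mathrm{lc.dim}_AM+\mathrm{grade}_AM\leq\mathrm{lc.dim}A+\mathrm{inf}A\leq\mathrm{lc.dim}_AM-\mathrm{depth}_AM+\mathrm{depth}A+\mathrm{inf}A=\mathrm{lc.dim}_AM+\mathrm{projdim}_AM+\mathrm{inf}A$. It uses the same three ingredients as you (Lemma \ref{lem2.4}(2), \cite[Corollary 3.3]{Y25} and Lemma \ref{lem0.15}(3)), so perfectness is equivalent to both inequalities being equalities, which is your sandwich shifted by $\mathrm{lc.dim}_AM$. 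Your direct subtraction for ($\Leftarrow$) is the same chain read in the other direction, and the amplitude identity $\mathrm{amp}\mathrm{R}\Gamma_{\bar{\mathfrak{m}}}(X)=\mathrm{lc.dim}_AX-\mathrm{depth}_AX$ you flag is the one recorded in the paper's preliminaries.
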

\begin{proof} One has the following inequalities
\begin{center}$\begin{aligned}\mathrm{lc.dim}_AM+\mathrm{grade}_AM
&\leq\mathrm{lc.dim}A+\mathrm{inf}A\\
&\leq\mathrm{lc.dim}_AM-\mathrm{depth}_AM+\mathrm{depth}A+\mathrm{inf}A\\
&=\mathrm{lc.dim}_AM+\mathrm{projdim}_AM+\mathrm{inf}A,\end{aligned}$\end{center}
where the first inequality is by Lemma \ref{lem2.4}(2) and the second one is by \cite[Corollary 3.3]{Y25}, the last equality is by Lemma \ref{lem0.15}(3), as desired.
\end{proof}

 The affirmative answer to the conjecture of Yoshida in the introduction is the special case of the following theorem.

\begin{thm}\label{lem3.4} Let $A$ be a local DG-ring with constant amplitude and $M,N\in\mathrm{D}^{\mathrm{b}}_{\mathrm{f}}(A)$ such that $M$ is perfect, $\mathrm{amp}N=\mathrm{amp}A$ and $\mathrm{lc.dim}_AN-\mathrm{sup}N=\mathrm{lc.dim}A$. If $M\otimes^\mathrm{L}_AN$ is local  Cohen-Macaulay, then $\mathrm{lc.dim}_A(M\otimes^\mathrm{L}_AN)=\mathrm{lc.dim}_AM+\mathrm{sup}N$ and $N$ is a maximal local Cohen-Macaulay DG-module.
\end{thm}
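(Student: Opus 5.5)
The strategy is to compare the depth and the local cohomology dimension of $X:=M\otimes^\mathrm{L}_AN$ with those of $M$ and $N$, and then squeeze $\mathrm{lc.dim}_AN-\mathrm{depth}_AN$ between two copies of $\mathrm{amp}A$.

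First I record the consequences of perfection. Since $M$ is perfect, $\mathrm{grade}_AM=\mathrm{projdim}_AM+\mathrm{inf}A$. The grade is finite, so $\mathrm{projdim}_AM<\infty$. Lemma \ref{lem0.15}(3) then gives $\mathrm{projdim}_AM=\mathrm{depth}A-\mathrm{depth}_AM$. Lemma \ref{lem0.15}(1) gives
\[\mathrm{depth}_AX=\mathrm{depth}_AM+\mathrm{depth}_AN-\mathrm{depth}A=\mathrm{depth}_AN-\mathrm{projdim}_AM.\]
Since $X$ is local Cohen-Macaulay, $\mathrm{lc.dim}_AX-\mathrm{depth}_AX=\mathrm{amp}A=-\mathrm{inf}A$. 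Hence
\[\mathrm{lc.dim}_AX=\mathrm{depth}_AN-\mathrm{projdim}_AM-\mathrm{inf}A.\]

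Next comes the squeeze. Lemma \ref{lem0.15}(2) gives
\[\mathrm{lc.dim}_AN\leq\mathrm{lc.dim}_AX+\mathrm{projdim}_AM=\mathrm{depth}_AN-\mathrm{inf}A,\]
so $\mathrm{lc.dim}_AN-\mathrm{depth}_AN\leq\mathrm{amp}A=\mathrm{amp}N$. On the other hand, Shaul's results recalled in the preliminaries give the general inequalities $\mathrm{amp}N\leq\mathrm{amp}\mathrm{R}\Gamma_{\bar{\mathfrak{m}}}(N)=\mathrm{lc.dim}_AN-\mathrm{depth}_AN$. All of these are therefore equalities, so $N$ is local Cohen-Macaulay. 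Together with the hypothesis $\mathrm{lc.dim}_AN=\mathrm{lc.dim}A+\mathrm{sup}N$, $N$ is maximal local Cohen-Macaulay. As a by-product, $\mathrm{lc.dim}_AN=\mathrm{lc.dim}_AX+\mathrm{projdim}_AM$ holds with equality, so
\[\mathrm{lc.dim}_AX=\mathrm{lc.dim}A+\mathrm{sup}N-\mathrm{projdim}_AM.\]

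For the dimension formula I need two bounds. The first is upper bound (a): $\mathrm{lc.dim}_AX\leq\mathrm{lc.dim}_AM+\mathrm{sup}N$. To prove it, use the description of $\mathrm{lc.dim}$ via \cite[Equality (2.14)]{Shaul} as $\sup\{\mathrm{dim}\mathrm{H}^0(A)/\bar{\mathfrak{p}}+\mathrm{sup}X_{\bar{\mathfrak{p}}}\}$. Use also $\mathrm{sup}X_{\bar{\mathfrak{p}}}=\mathrm{sup}M_{\bar{\mathfrak{p}}}+\mathrm{sup}N_{\bar{\mathfrak{p}}}$ for $\bar{\mathfrak{p}}\in\mathrm{Supp}M\cap\mathrm{Supp}N=\mathrm{Supp}X$, which follows from Nakayama applied to the top cohomologies. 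Finally use $\mathrm{sup}N_{\bar{\mathfrak{p}}}\leq\mathrm{sup}N$. The second is upper bound (b): $\mathrm{lc.dim}_AM\leq\mathrm{lc.dim}A-\mathrm{projdim}_AM$. This follows from Lemma \ref{lem2.4}(2), which uses constant amplitude, combined with $\mathrm{grade}_AM=\mathrm{projdim}_AM+\mathrm{inf}A$. Combining (a) and (b) gives $\mathrm{lc.dim}_AX\leq\mathrm{lc.dim}A-\mathrm{projdim}_AM+\mathrm{sup}N$. The right-hand side equals $\mathrm{lc.dim}_AX$ by the previous step, so all inequalities are equalities and $\mathrm{lc.dim}_AX=\mathrm{lc.dim}_AM+\mathrm{sup}N$.

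The main obstacle I expect is the local computation in (a), namely the additivity of $\mathrm{sup}$ under derived tensor product after localization together with $\mathrm{Supp}X=\mathrm{Supp}M\cap\mathrm{Supp}N$. This needs the Nakayama/Künneth argument at the top degree for DG-modules in $\mathrm{D}^\mathrm{b}_\mathrm{f}$. Everything else is bookkeeping with the cited results.
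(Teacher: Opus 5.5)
\textbf{Gap: the lower half of your squeeze.} You write that the preliminaries give a general inequality $\mathrm{amp}N\leq\mathrm{amp}\mathrm{R}\Gamma_{\bar{\mathfrak{m}}}(N)$. The preliminaries only record the identity $\mathrm{amp}\mathrm{R}\Gamma_{\bar{\mathfrak{m}}}(N)=\mathrm{lc.dim}_AN-\mathrm{depth}_AN$. The inequality itself is false for general $N\in\mathrm{D}^{\mathrm{b}}_{\mathrm{f}}(A)$.

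For example, let $R$ be an ordinary Cohen--Macaulay local ring of dimension $d\geq1$ with residue field $k$, viewed as a DG-ring. The complex $N=R\oplus k[-d]$ has $\mathrm{amp}N=d$, while $\mathrm{R}\Gamma_{\bar{\mathfrak{m}}}(N)$ is concentrated in degree $d$ and so has amplitude $0$. This is why the paper imposes $\mathrm{amp}M\leq\mathrm{amp}\mathrm{R}\Gamma_{\bar{\mathfrak{m}}}(M)$ as an explicit hypothesis in Theorem \ref{lem2.5} and Corollary \ref{lem2.8}.

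In the present situation the inequality does hold, but only because of the hypothesis on $\mathrm{lc.dim}_AN$, and that has to be argued. As written, both of your conclusions are unproved. Your ``by-product'' $\mathrm{lc.dim}_AN=\mathrm{lc.dim}_AX+\mathrm{projdim}_AM$ rests on the squeeze, and your derivation of the dimension formula rests on that by-product.

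\textbf{Repair.} You already have the ingredients; deploy them in a different order. Combine Lemma \ref{lem0.15}(2), the hypothesis $\mathrm{lc.dim}_AN=\mathrm{lc.dim}A+\mathrm{sup}N$, and your bounds (a) and (b):
\[\mathrm{lc.dim}A+\mathrm{sup}N-\mathrm{projdim}_AM\leq\mathrm{lc.dim}_AX\leq\mathrm{lc.dim}_AM+\mathrm{sup}N\leq\mathrm{lc.dim}A+\mathrm{sup}N-\mathrm{projdim}_AM.\]
All of these are therefore equalities. This gives $\mathrm{lc.dim}_AX=\mathrm{lc.dim}_AM+\mathrm{sup}N$ without even using that $X$ is Cohen--Macaulay; it is the argument of Proposition \ref{lem3.0}(1). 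It also gives $\mathrm{lc.dim}_AN=\mathrm{lc.dim}_AX+\mathrm{projdim}_AM$. Your first computation, which does use that $X$ is Cohen--Macaulay, turns this into $\mathrm{lc.dim}_AN=\mathrm{depth}_AN-\mathrm{inf}A$. Hence $\mathrm{amp}\mathrm{R}\Gamma_{\bar{\mathfrak{m}}}(N)=\mathrm{amp}A=\mathrm{amp}N$, so $N$ is local Cohen--Macaulay, and the hypothesis on $\mathrm{lc.dim}_AN$ makes it maximal.

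\textbf{Comparison with the paper.} The paper also gets the lower bound $\mathrm{lc.dim}_AN-\mathrm{depth}_AN\geq-\mathrm{inf}A$ from the dimension formula and Lemma \ref{lem2.4}(2), not from any general inequality. For the upper bound it cites \cite[Corollary 3.3]{Y25}, namely $\mathrm{amp}\mathrm{R}\Gamma_{\bar{\mathfrak{m}}}(M\otimes^{\mathrm{L}}_AN)\geq\mathrm{amp}\mathrm{R}\Gamma_{\bar{\mathfrak{m}}}(N)$. Your use of Lemma \ref{lem0.15}(2) there is correct and somewhat more self-contained.
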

\begin{proof} For $\bar{\mathfrak{p}}\in\mathrm{Spec}\mathrm{H}^0(A)$, one has $\mathrm{dim}\mathrm{H}^0(A)/\bar{\mathfrak{p}}+\mathrm{sup}(M\otimes^\mathrm{L}_AN)_{\bar{\mathfrak{p}}}=
\mathrm{dim}\mathrm{H}^0(A)/\bar{\mathfrak{p}}+\mathrm{sup}M_{\bar{\mathfrak{p}}}+\mathrm{sup}N_{\bar{\mathfrak{p}}}\leq\mathrm{lc.dim}_AM+\mathrm{sup}N$ by \cite[Lemma 2.3(3)]{Y25}, it implies that
$\mathrm{lc.dim}_A(M\otimes^\mathrm{L}_AN)\leq\mathrm{lc.dim}_AM+\mathrm{sup}N$. On the other hand, $-\mathrm{inf}A=\mathrm{ampR}\Gamma_{\bar{\mathfrak{m}}}(M\otimes^\mathrm{L}_AN)\geq\mathrm{ampR}\Gamma_{\bar{\mathfrak{m}}}(N)$ by \cite[Corollary 3.3]{Y25}, it yields the following  (in)equalities:
 \begin{center}$\begin{aligned}\mathrm{lc.dim}_A(M\otimes^\mathrm{L}_AN)
 &=\mathrm{depth}_A(M\otimes^\mathrm{L}_AN)-\mathrm{inf}A\\
&=\mathrm{depth}_AN-\mathrm{inf}A-\mathrm{projdim}_AM\\
&\geq\mathrm{lc.dim}_AN+\mathrm{inf}A-\mathrm{grade}_AM\\
&\geq\mathrm{lc.dim}_AN+\mathrm{lc.dim}_AM-\mathrm{lc.dim}A\\
&=\mathrm{lc.dim}_AM+\mathrm{sup}N,\end{aligned}$\end{center}
where the second equality is by Lemma \ref{lem0.15} (1) and (3), the inequalities are by $\mathrm{ampR}\Gamma_{\bar{\mathfrak{m}}}(N)\leq-\mathrm{inf}A$ and
 Lemma \ref{lem2.4}(2). So $\mathrm{lc.dim}_A(M\otimes^\mathrm{L}_AN)=\mathrm{lc.dim}_AM+\mathrm{sup}N$. Also
\begin{center}$\begin{aligned}\mathrm{lc.dim}_AN-\mathrm{depth}_AN
&=\mathrm{lc.dim}A+\mathrm{sup}N+\mathrm{depth}_AM-\mathrm{depth}_A(M\otimes^\mathrm{L}_AN)-\mathrm{depth}A\\
&=\mathrm{lc.dim}A+\mathrm{sup}N-\mathrm{projdim}_AM-\mathrm{lc.dim}_A(M\otimes^\mathrm{L}_AN)-\mathrm{inf}A\\
&=\mathrm{lc.dim}A+\mathrm{sup}N-\mathrm{grade}_AM-\mathrm{lc.dim}_AM-\mathrm{sup}N\\
&\geq-\mathrm{inf}A,\end{aligned}$\end{center}where the first equality is by Lemma \ref{lem0.15}(1) and the inequality is
by Lemma \ref{lem2.4}(2). Hence $\mathrm{ampR}\Gamma_{\bar{\mathfrak{m}}}(N)=\mathrm{amp}A$ and $N$ is  maximal local Cohen-Macaulay.
\end{proof}

We now consider the converse of Theorem \ref{lem3.4}.

\begin{prop}\label{lem3.0} Let $A$ be a local DG-ring with constant amplitude and $M,N\in\mathrm{D}^{\mathrm{b}}_{\mathrm{f}}(A)$ such that $M$ is perfect and $N$ is maximal local Cohen-Macaulay.

$(1)$ $\mathrm{lc.dim}_A(M\otimes^\mathrm{L}_AN)=\mathrm{lc.dim}_AM+\mathrm{lc.dim}_AN-\mathrm{lc.dim}A$.

$(2)$ If $\mathrm{ampR}\Gamma_{\bar{\mathfrak{m}}}(M\otimes^\mathrm{L}_AN)\geq\mathrm{amp}(M\otimes^\mathrm{L}_AN)$, then $M\otimes^\mathrm{L}_AN$ is local  Cohen-Macaulay.
\end{prop}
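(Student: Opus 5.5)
For $(1)$ I would prove two opposite inequalities. First I would note that $\mathrm{projdim}_AM<\infty$: by perfection $\mathrm{grade}_AM=\mathrm{projdim}_AM+\mathrm{inf}A$, and $\mathrm{grade}_AM$ is finite.

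\emph{Upper bound.} I would repeat the first step of the proof of Theorem \ref{lem3.4}. For each $\bar{\mathfrak{p}}$, \cite[Lemma 2.3(3)]{Y25} gives $\mathrm{sup}(M\otimes^\mathrm{L}_AN)_{\bar{\mathfrak{p}}}=\mathrm{sup}M_{\bar{\mathfrak{p}}}+\mathrm{sup}N_{\bar{\mathfrak{p}}}$, and $\mathrm{sup}N_{\bar{\mathfrak{p}}}\leq\mathrm{sup}N$. Combined with \cite[Equality (2.14)]{Shaul}, this yields $\mathrm{lc.dim}_A(M\otimes^\mathrm{L}_AN)\leq\mathrm{lc.dim}_AM+\mathrm{sup}N$. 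Maximality of $N$ gives $\mathrm{sup}N=\mathrm{lc.dim}_AN-\mathrm{lc.dim}A$, so the upper bound is $\mathrm{lc.dim}_AM+\mathrm{lc.dim}_AN-\mathrm{lc.dim}A$.

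\emph{Lower bound.} Lemma \ref{lem0.15}(2) gives $\mathrm{lc.dim}_AN\leq\mathrm{lc.dim}_A(M\otimes^\mathrm{L}_AN)+\mathrm{projdim}_AM$. It then suffices to show $\mathrm{projdim}_AM\leq\mathrm{lc.dim}A-\mathrm{lc.dim}_AM$. This follows from perfection and Lemma \ref{lem2.4}(2):
$$\mathrm{projdim}_AM+\mathrm{inf}A=\mathrm{grade}_AM\leq\mathrm{lc.dim}A-\mathrm{lc.dim}_AM+\mathrm{inf}A.$$
In fact Proposition \ref{lem3.1} upgrades this to the equality $\mathrm{projdim}_AM=\mathrm{lc.dim}A-\mathrm{lc.dim}_AM$, which I will use in $(2)$.

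For $(2)$, I would compute $\mathrm{amp}\mathrm{R}\Gamma_{\bar{\mathfrak{m}}}(M\otimes^\mathrm{L}_AN)=\mathrm{lc.dim}_A(M\otimes^\mathrm{L}_AN)-\mathrm{depth}_A(M\otimes^\mathrm{L}_AN)$ using the two invariants of the tensor product.
\begin{itemize}
\item By Lemma \ref{lem0.15}(1),(3), $\mathrm{depth}_A(M\otimes^\mathrm{L}_AN)=\mathrm{depth}_AN-\mathrm{projdim}_AM$.
\item Since $N$ is maximal local Cohen-Macaulay, $\mathrm{depth}_AN=\mathrm{lc.dim}_AN+\mathrm{inf}A=\mathrm{sup}N+\mathrm{lc.dim}A+\mathrm{inf}A$.
\item By $(1)$, $\mathrm{lc.dim}_A(M\otimes^\mathrm{L}_AN)=\mathrm{lc.dim}_AM+\mathrm{sup}N$.
\end{itemize}
Substituting, and using $\mathrm{projdim}_AM=\mathrm{lc.dim}A-\mathrm{lc.dim}_AM$, I expect
$$\mathrm{amp}\mathrm{R}\Gamma_{\bar{\mathfrak{m}}}(M\otimes^\mathrm{L}_AN)=\mathrm{lc.dim}_AM+\mathrm{projdim}_AM-\mathrm{lc.dim}A-\mathrm{inf}A=-\mathrm{inf}A=\mathrm{amp}A.$$

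The hypothesis then gives $\mathrm{amp}(M\otimes^\mathrm{L}_AN)\leq\mathrm{amp}A$. To conclude that $M\otimes^\mathrm{L}_AN$ is local Cohen-Macaulay, I still need the reverse inequality $\mathrm{amp}(M\otimes^\mathrm{L}_AN)\geq\mathrm{amp}A$. I expect this to be the main obstacle.

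My first attempt would be to invoke the same bound $-\mathrm{inf}A\leq\mathrm{amp}X$ from \cite[Proposition 4.1]{Y25} that was used in the proof of Theorem \ref{lem2.5}. That bound was applied there to a module of finite projective dimension, whereas $M\otimes^\mathrm{L}_AN$ need not have finite projective dimension, so I would first check whether \cite[Proposition 4.1]{Y25} actually covers this situation.

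Failing that, I would argue directly. We have $\mathrm{sup}(M\otimes^\mathrm{L}_AN)=\mathrm{sup}M+\mathrm{sup}N$. I would try to bound the infimum from above by passing to a prime $\bar{\mathfrak{p}}$ where $A_{\bar{\mathfrak{p}}}$ has depth equal to $\mathrm{inf}A$. Constant amplitude of $A$ is what should make such a localization reach amplitude $\mathrm{amp}A$; there $N_{\bar{\mathfrak{p}}}$ keeps amplitude $\mathrm{amp}A$, and a tensor-product depth count at $\bar{\mathfrak{p}}$ should force a nonvanishing cohomology of $M\otimes^\mathrm{L}_AN$ in degree at most $\mathrm{sup}(M\otimes^\mathrm{L}_AN)+\mathrm{inf}A$.
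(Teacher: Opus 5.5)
Part (1) and the computation in (2) are correct and follow the paper's proof. In (2) you even get the equality $\mathrm{amp}\,\mathrm{R}\Gamma_{\bar{\mathfrak m}}(M\otimes^{\mathrm L}_AN)=\mathrm{amp}A$ via Proposition~\ref{lem3.1}, whereas the paper only derives $\leq\mathrm{amp}N$ from Lemma~\ref{lem2.4}(2); either suffices.

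\textbf{The gap.} It is the step you flag yourself: you never establish $\mathrm{amp}(M\otimes^{\mathrm L}_AN)\geq\mathrm{amp}A$. Without it, the hypothesis only yields $\mathrm{amp}(M\otimes^{\mathrm L}_AN)\leq\mathrm{amp}A$, which does not make $M\otimes^{\mathrm L}_AN$ local Cohen--Macaulay.

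The missing ingredient is the DG amplitude inequality of \cite[Proposition 4.1]{Y25}, used in its two-variable form: if $\mathrm{projdim}_AM<\infty$, then $\mathrm{amp}(M\otimes^{\mathrm L}_AN)\geq\mathrm{amp}N$ for $N\in\mathrm{D}^{\mathrm b}_{\mathrm f}(A)$. This is how the paper invokes it. The bound $\mathrm{amp}X\geq-\mathrm{inf}A$ used in Theorem~\ref{lem2.5} is just the case $N=A$.

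Finite projective dimension is required of $M$ only, not of $M\otimes^{\mathrm L}_AN$, so your worry dissolves. Apply the inequality to the pair $(M,N)$ and use $\mathrm{amp}N=\mathrm{amp}A$ (since $N$ is local Cohen--Macaulay). This gives $\mathrm{amp}A=\mathrm{amp}\,\mathrm{R}\Gamma_{\bar{\mathfrak m}}(M\otimes^{\mathrm L}_AN)\geq\mathrm{amp}(M\otimes^{\mathrm L}_AN)\geq\mathrm{amp}N=\mathrm{amp}A$, which is exactly how the paper concludes.

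\textbf{The fallback.} As sketched, it does not fill the gap. A prime $\bar{\mathfrak p}$ with $\mathrm{depth}A_{\bar{\mathfrak p}}=\mathrm{inf}A$ need not lie in $\mathrm{Supp}_AM\cap\mathrm{Supp}_AN$. Even when it does, Lemma~\ref{lem0.15}(1),(3) over $A_{\bar{\mathfrak p}}$ give $\mathrm{depth}(M\otimes^{\mathrm L}_AN)_{\bar{\mathfrak p}}=\mathrm{depth}N_{\bar{\mathfrak p}}-\mathrm{projdim}M_{\bar{\mathfrak p}}$, in which $\mathrm{depth}A_{\bar{\mathfrak p}}$ has cancelled. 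So the special choice of $\bar{\mathfrak p}$ buys nothing.

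To push $\mathrm{inf}(M\otimes^{\mathrm L}_AN)$ down to $\mathrm{sup}(M\otimes^{\mathrm L}_AN)+\mathrm{inf}A$ this way, you would need control of $\mathrm{depth}N_{\bar{\mathfrak p}}$, roughly $\mathrm{depth}N_{\bar{\mathfrak p}}=\mathrm{inf}N_{\bar{\mathfrak p}}=\mathrm{inf}N$. Nothing in your setup provides this.

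The needed inequality is trivial for ordinary rings, where $\mathrm{amp}A=0$. In the DG setting, however, it is an intersection-type result, which is why it has to be imported from \cite{Y25} rather than obtained by a quick localization.
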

\begin{proof} (1) One has the following inequalities:
\begin{center}$\begin{aligned}\mathrm{lc.dim}_A(M\otimes^\mathrm{L}_AN)
&\geq\mathrm{lc.dim}_AN-\mathrm{projdim}_AM\\
&=\mathrm{lc.dim}A+\mathrm{sup}N-\mathrm{grade}_AM+\mathrm{inf}A\\
&\geq\mathrm{lc.dim}A+\mathrm{sup}N+\mathrm{lc.dim}_AM-\mathrm{lc.dim}A\\
&=\mathrm{lc.dim}_AM+\mathrm{sup}N,\end{aligned}$\end{center}
where the first inequality is by Lemma \ref{lem0.15}(2), the second inequality is by Lemma \ref{lem2.4}(2). On the other hand, $\mathrm{lc.dim}_A(M\otimes^\mathrm{L}_AN)\leq\mathrm{lc.dim}_AM+\mathrm{sup}N$
by the proof of Theorem \ref{lem3.4}. Hence $\mathrm{lc.dim}_A(M\otimes^\mathrm{L}_AN)=\mathrm{lc.dim}_AM+\mathrm{lc.dim}_AN-\mathrm{lc.dim}A$.

(2) As $N$ is maximal local Cohen-Macaulay, $\mathrm{depth}_AN=\mathrm{lc.dim}_AN-\mathrm{amp}N=\mathrm{lc.dim}A+\mathrm{inf}N$. One has the following (in)equalities:
 \begin{center}$\begin{aligned}\mathrm{ampR}\Gamma_{\bar{\mathfrak{m}}}(M\otimes^\mathrm{L}_AN)
 &=\mathrm{lc.dim}_A(M\otimes^\mathrm{L}_AN)-\mathrm{depth}_A(M\otimes^\mathrm{L}_AN)\\
 &=\mathrm{lc.dim}_AM+\mathrm{projdim}_AM+\mathrm{sup}N-\mathrm{depth}_AN\\
&=\mathrm{lc.dim}_AM+\mathrm{grade}_AM+\mathrm{sup}N-\mathrm{depth}_AN-\mathrm{inf}A\\
&=\mathrm{lc.dim}_AM+\mathrm{grade}_AM+\mathrm{amp}N-\mathrm{inf}A-\mathrm{lc.dim}A\\
&\leq\mathrm{lc.dim}_AM+\mathrm{lc.dim}A-\mathrm{lc.dim}_AM+\mathrm{inf}A+\mathrm{amp}N-\mathrm{inf}A-\mathrm{lc.dim}A\\
&=\mathrm{amp}N,\end{aligned}$\end{center}
 where the second equality is by Lemma \ref{lem0.15} (1) and (3), the inequality is
 by Lemma \ref{lem2.4}(2). On the other hand, $\mathrm{amp}(M\otimes^\mathrm{L}_AN)\geq\mathrm{amp}N$ by \cite[Proposition 4.1]{Y25},
 it follows by assumption that $M\otimes^\mathrm{L}_AN$ is local  Cohen-Macaulay.
\end{proof}

\begin{cor}\label{lem3.2} Let $A$ be a local  Cohen-Macaulay DG-ring with constant amplitude and $M\in\mathrm{D}^{\mathrm{b}}_{\mathrm{f}}(A)$ with $\mathrm{projdim}_AM<\infty$ and $\mathrm{ampR}\Gamma_{\bar{\mathfrak{m}}}(M)\geq\mathrm{amp}M$. The following are equivalent:

$(1)$ $M$ is local Cohen-Macaulay;

$(2)$ $M\otimes^\mathrm{L}_AN$ is local  Cohen-Macaulay for any  maximal local Cohen-Macaulay DG-module $N$ with $\mathrm{ampR}\Gamma_{\bar{\mathfrak{m}}}(M\otimes^\mathrm{L}_AN)\geq\mathrm{amp}(M\otimes^\mathrm{L}_AN)$;

$(3)$ $M\otimes^\mathrm{L}_AN$ is local  Cohen-Macaulay for some  maximal local Cohen-Macaulay DG-module $N$ with $\mathrm{ampR}\Gamma_{\bar{\mathfrak{m}}}(M\otimes^\mathrm{L}_AN)\geq\mathrm{amp}(M\otimes^\mathrm{L}_AN)$;

$(4)$ $M\otimes^\mathrm{L}_AR$ is local  Cohen-Macaulay and $\mathrm{ampR}\Gamma_{\bar{\mathfrak{m}}}(M\otimes^\mathrm{L}_AR)\geq\mathrm{amp}M$, where $R$ is a normalized dualizing DG-module for $A$.
\end{cor}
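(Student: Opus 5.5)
The plan is to run the cycle (1) $\Rightarrow$ (2) $\Rightarrow$ (3) $\Rightarrow$ (1), and to prove (1) $\Leftrightarrow$ (4) separately. Throughout we use that, by Theorem \ref{lem2.5}, under the standing hypotheses ($A$ local Cohen-Macaulay with constant amplitude, $\mathrm{projdim}_AM<\infty$, and $\mathrm{ampR}\Gamma_{\bar{\mathfrak{m}}}(M)\geq\mathrm{amp}M$) condition (1) is equivalent to $M$ being perfect.

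For (1) $\Rightarrow$ (2), Remark \ref{lem:2.2}(4) shows that $M$ is perfect. For any maximal local Cohen-Macaulay $N$ satisfying the amplitude hypothesis, Proposition \ref{lem3.0}(2) then gives that $M\otimes^\mathrm{L}_AN$ is local Cohen-Macaulay. The implication (2) $\Rightarrow$ (3) is trivial once we exhibit one admissible $N$. The natural choice is $N=A$: it is maximal local Cohen-Macaulay because $A$ is Cohen-Macaulay, and $M\otimes^\mathrm{L}_AA\simeq M$ satisfies the amplitude condition by hypothesis.

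For (3) $\Rightarrow$ (1), let $N$ be such a DG-module. Since $M\otimes^\mathrm{L}_AN$ is local Cohen-Macaulay, $\mathrm{ampR}\Gamma_{\bar{\mathfrak{m}}}(M\otimes^\mathrm{L}_AN)=-\mathrm{inf}A$. Expanding this with the same chain as in the proof of Proposition \ref{lem3.0}(2), using Lemma \ref{lem0.15}(1),(3), the identity $\mathrm{depth}_AN=\mathrm{lc.dim}A+\mathrm{inf}N$, and the value of $\mathrm{lc.dim}_A(M\otimes^\mathrm{L}_AN)$ from Proposition \ref{lem3.0}(1), yields
\[-\mathrm{inf}A=\mathrm{lc.dim}_AM+\mathrm{projdim}_AM+\mathrm{amp}N-\mathrm{lc.dim}A.\]
Since $\mathrm{amp}N=-\mathrm{inf}A$, this becomes $\mathrm{lc.dim}_AM+\mathrm{projdim}_AM=\mathrm{lc.dim}A=\mathrm{depth}A-\mathrm{inf}A$, where the last equality uses that $A$ is Cohen-Macaulay. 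Lemma \ref{lem2.4}(2) gives $\mathrm{grade}_AM=\mathrm{depth}A-\mathrm{lc.dim}_AM$, so $\mathrm{grade}_AM=\mathrm{projdim}_AM+\mathrm{inf}A$ and $M$ is perfect. Theorem \ref{lem2.5} then gives (1). Alternatively, one can avoid Proposition \ref{lem3.0}(1) by computing directly: $\mathrm{lc.dim}_A(M\otimes^\mathrm{L}_AN)-\mathrm{depth}_A(M\otimes^\mathrm{L}_AN)$ with the upper bound $\mathrm{lc.dim}_AM+\mathrm{sup}N$ from Theorem \ref{lem3.4} forces $\mathrm{projdim}_AM\leq\mathrm{depth}A-\mathrm{lc.dim}_AM-\mathrm{inf}A$.

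For (4), the plan is to use the theorem's proof, which gives $\mathrm{amp}(M\otimes^\mathrm{L}_AR)=\mathrm{amp}M$ and $\mathrm{inf}(M\otimes^\mathrm{L}_AR)=\mathrm{inf}M+\mathrm{sup}R$, together with local duality $\mathrm{R}\Gamma_{\bar{\mathfrak{m}}}(X)\simeq$ Matlis dual of $\mathrm{RHom}_A(X,R)$ (as in \cite[Theorem 7.26]{s18}). Since $\mathrm{RHom}_A(M\otimes^\mathrm{L}_AR,R)\simeq\mathrm{RHom}_A(M,A)$, we get $\mathrm{ampR}\Gamma_{\bar{\mathfrak{m}}}(M\otimes^\mathrm{L}_AR)=\mathrm{ampRHom}_A(M,A)$. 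Thus (4) says that $\mathrm{amp}M=-\mathrm{inf}A$ (this is the amplitude part of $M\otimes^\mathrm{L}_AR$ being Cohen-Macaulay) and $\mathrm{ampRHom}_A(M,A)=-\mathrm{inf}A$. The implication (1) $\Rightarrow$ (4) then follows from Remark \ref{lem:2.2}(2), since $M$ is perfect, and $\mathrm{amp}M=\mathrm{amp}A$.

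The main obstacle is (4) $\Rightarrow$ (1). We know $\mathrm{ampRHom}_A(M,A)=\mathrm{projdim}_AM-\mathrm{grade}_AM$, by \cite[Proposition 4.4(3)]{ya20} provided that formula holds without assuming perfectness; if it does not, this step fails. Granting it, $\mathrm{projdim}_AM-\mathrm{grade}_AM=-\mathrm{inf}A$, so $M$ is perfect and Theorem \ref{lem2.5} finishes. If that route fails, the fallback is Theorem \ref{lem2.5}(3): $R$ is maximal local Cohen-Macaulay of constant amplitude, and we would try to deduce that $\mathrm{RHom}_A(M,A)$ is Cohen-Macaulay from the Cohen-Macaulayness of $M\otimes^\mathrm{L}_AR$ by duality.
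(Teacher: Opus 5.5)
\textbf{The gap is in (3) $\Rightarrow$ (1), and it cannot be closed.}

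Your main computation takes $\mathrm{lc.dim}_A(M\otimes^{\mathrm{L}}_AN)=\mathrm{lc.dim}_AM+\mathrm{sup}N$ from Proposition \ref{lem3.0}(1). That proposition assumes $M$ is perfect, which is exactly what you are trying to prove. In its proof, perfection is precisely what upgrades the bound $\mathrm{lc.dim}_A(M\otimes^{\mathrm{L}}_AN)\geq\mathrm{lc.dim}_AN-\mathrm{projdim}_AM$ of Lemma \ref{lem0.15}(2) to $\geq\mathrm{lc.dim}_AM+\mathrm{sup}N$.

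Your alternative runs the wrong way. Substitute the upper bound $\mathrm{lc.dim}_A(M\otimes^{\mathrm{L}}_AN)\leq\mathrm{lc.dim}_AM+\mathrm{sup}N$ and $\mathrm{depth}_A(M\otimes^{\mathrm{L}}_AN)=\mathrm{lc.dim}A+\mathrm{inf}N-\mathrm{projdim}_AM$ into $-\mathrm{inf}A=\mathrm{lc.dim}_A(M\otimes^{\mathrm{L}}_AN)-\mathrm{depth}_A(M\otimes^{\mathrm{L}}_AN)$. Using $\mathrm{amp}N=-\mathrm{inf}A$, this yields $\mathrm{projdim}_AM\geq\mathrm{lc.dim}A-\mathrm{lc.dim}_AM=\mathrm{grade}_AM-\mathrm{inf}A$. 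That is the reverse of what you claim, and it is just Lemma \ref{lem2.4}(3) again. What is really needed is a lower bound on $\mathrm{lc.dim}_A(M\otimes^{\mathrm{L}}_AN)$, i.e. that $N$ sees the top-dimensional part of $M$. A maximal local Cohen-Macaulay $N$ need not do so.

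\textbf{In fact (3) $\Rightarrow$ (1) is false.} Take $A=k[[x,y]]/(xy)$ with $k$ a field. This is a one-dimensional Cohen-Macaulay local ring, so $\mathrm{inf}A=\mathrm{amp}A=0$ and $A$ has constant amplitude. Let $M=(0\to A\xrightarrow{y}A\to0)$ be the Koszul complex in degrees $-1,0$, and let $N=A/xA\cong k[[y]]$.

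\emph{Standing hypotheses hold but (1) fails.} We have $\mathrm{projdim}_AM=1$ and $\mathrm{H}^0(M)=A/yA\cong k[[x]]\cong xA=(0:_Ay)=\mathrm{H}^{-1}(M)$. Hence $\mathrm{amp}M=1$, $\mathrm{lc.dim}_AM=1$, $\mathrm{depth}_AM=\mathrm{depth}A-\mathrm{projdim}_AM=0$, and $\mathrm{amp}\,\mathrm{R}\Gamma_{\bar{\mathfrak{m}}}(M)=1\geq\mathrm{amp}M$. Yet $M$ is not local Cohen-Macaulay, since $\mathrm{amp}M\neq\mathrm{amp}A$.

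\emph{(3) holds.} $N$ is maximal local Cohen-Macaulay. Since $y$ is $N$-regular, $M\otimes^{\mathrm{L}}_AN$ is its Koszul complex on $N$, so $M\otimes^{\mathrm{L}}_AN\simeq k$. This is local Cohen-Macaulay with $\mathrm{amp}\,\mathrm{R}\Gamma_{\bar{\mathfrak{m}}}(k)=0=\mathrm{amp}\,k$. Note that $\mathrm{lc.dim}_A(M\otimes^{\mathrm{L}}_AN)=0<1=\mathrm{lc.dim}_AM+\mathrm{sup}N$, which is exactly the missing lower bound failing.

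The paper's own justification of this step, ``set $N=A$'', only proves (2) $\Rightarrow$ (1), because in (3) $N$ is given rather than chosen.

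\textbf{What your proposal does establish correctly.} It proves (1) $\Leftrightarrow$ (2), (1) $\Leftrightarrow$ (4), and (2) $\Rightarrow$ (3). For (2) $\Rightarrow$ (1), your own observation that $N=A$ is admissible gives it directly. Your direct proof of (4) $\Rightarrow$ (1) is sound and preferable to the paper's route through (3). The formula you were unsure about holds whenever $\mathrm{projdim}_AM<\infty$: $\mathrm{projdim}_AM=\mathrm{sup}\,\mathrm{RHom}_A(M,A)$ by \cite[Proposition 4.4(3)]{ya20}, and the paper itself uses this for non-perfect DG-modules in the proof of Theorem \ref{lem2.3}. Also $\mathrm{grade}_AM=\mathrm{inf}\,\mathrm{RHom}_A(M,A)$ by definition, so $\mathrm{amp}\,\mathrm{RHom}_A(M,A)=\mathrm{projdim}_AM-\mathrm{grade}_AM$.
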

\begin{proof} (1) $\Rightarrow$ (2)  This follows by Remark \ref{lem:2.2}(4) and Proposition \ref{lem3.0}(2).

(2) $\Rightarrow$ (3) is trivial. (3) $\Rightarrow$ (1) Set $N=A$.

(4) $\Rightarrow$ (3) is trivial as $R$ is maximal local Cohen-Macaulay by \cite[Theorem 6.7]{Shaul}
and $\mathrm{amp}(M\otimes^\mathrm{L}_AR)=\mathrm{amp}M$ by the proof of Theorem \ref{lem2.5}.

(1) $\Rightarrow$ (4) By \cite[Theorem 7.26]{s18}, Remark \ref{lem:2.2} (4) and (2), one has $\mathrm{ampR}\Gamma_{\bar{\mathfrak{m}}}(M\otimes^\mathrm{L}_AR)=\mathrm{ampRHom}_A(M\otimes^\mathrm{L}_AR,R)=\mathrm{ampRHom}_A(M,A)=\mathrm{amp}A$ and $\mathrm{amp}(M\otimes^\mathrm{L}_AR)=\mathrm{amp}M$ by the proof of Theorem \ref{lem2.5}, as claimed.
\end{proof}

\begin{prop}\label{lem2.9} Let $A$ be a local DG-ring and $M\in\mathrm{D}^{\mathrm{b}}_{\mathrm{f}}(A)$ be perfect with constant amplitude. The following are equivalent:

$(1)$ $\bar{\mathfrak{p}}\in\mathrm{Ass}_AM$;

$(2)$ $\mathrm{grade}_AM=\mathrm{depth}A_{\bar{\mathfrak{p}}}-\mathrm{sup}M_{\bar{\mathfrak{p}}}$ and $\mathrm{projdim}A_{\bar{\mathfrak{p}}}M_{\bar{\mathfrak{p}}}=\mathrm{projdim}_AM$.
\end{prop}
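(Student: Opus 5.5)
Throughout, $M$ is perfect, so $\mathrm{projdim}_AM=\mathrm{grade}_AM-\mathrm{inf}A<\infty$, and Lemma \ref{lem0.15}(3) applies both to $M$ over $A$ and to $M_{\bar{\mathfrak{p}}}$ over $A_{\bar{\mathfrak{p}}}$ (localization does not increase projective dimension). Fix $\bar{\mathfrak{p}}\in\mathrm{Supp}_AM$. The plan is to compare the two quantities
$$\mathrm{projdim}_{A_{\bar{\mathfrak{p}}}}M_{\bar{\mathfrak{p}}}=\mathrm{depth}A_{\bar{\mathfrak{p}}}-\mathrm{depth}_{A_{\bar{\mathfrak{p}}}}M_{\bar{\mathfrak{p}}},\qquad \mathrm{depth}A_{\bar{\mathfrak{p}}}-\mathrm{sup}M_{\bar{\mathfrak{p}}}\geq \mathrm{grade}_AM,$$
where the inequality is the equality $(\ast)$.

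\emph{Key chain of inequalities.} Since $\mathrm{depth}_{A_{\bar{\mathfrak{p}}}}M_{\bar{\mathfrak{p}}}\geq\mathrm{inf}M_{\bar{\mathfrak{p}}}$, we get
$$\mathrm{projdim}_AM\geq\mathrm{projdim}_{A_{\bar{\mathfrak{p}}}}M_{\bar{\mathfrak{p}}}=\mathrm{depth}A_{\bar{\mathfrak{p}}}-\mathrm{depth}M_{\bar{\mathfrak{p}}}\leq\mathrm{depth}A_{\bar{\mathfrak{p}}}-\mathrm{inf}M_{\bar{\mathfrak{p}}}.$$
Using constant amplitude, $\mathrm{inf}M_{\bar{\mathfrak{p}}}=\mathrm{sup}M_{\bar{\mathfrak{p}}}-\mathrm{amp}M$. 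I also need $\mathrm{amp}M=-\mathrm{inf}A$. This follows from perfectness: by Remark \ref{lem:2.2}(2), $\mathrm{amp}\,\mathrm{RHom}_A(M,A)=\mathrm{amp}A$, and dualizing back (finite projective dimension gives $M\simeq\mathrm{RHom}_A(\mathrm{RHom}_A(M,A),A)$), or directly from \cite[Proposition 4.1]{Y25}, one gets $\mathrm{amp}M\leq\mathrm{projdim}_AM-\mathrm{grade}_AM=-\mathrm{inf}A$. The reverse bound holds as well, since $\mathrm{amp}M\geq\mathrm{amp}A$ for finite projective dimension. Securing this amplitude identity is the step I expect to be the main obstacle. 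If only $\mathrm{amp}M\leq-\mathrm{inf}A$ is available, the inequalities below should still go through in the needed direction.

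\emph{(1)$\Rightarrow$(2).} Suppose $\bar{\mathfrak{p}}\in\mathrm{Ass}_AM$, i.e.\ $\mathrm{depth}M_{\bar{\mathfrak{p}}}=\mathrm{inf}M_{\bar{\mathfrak{p}}}$. Then
$$\mathrm{projdim}_{A_{\bar{\mathfrak{p}}}}M_{\bar{\mathfrak{p}}}=\mathrm{depth}A_{\bar{\mathfrak{p}}}-\mathrm{sup}M_{\bar{\mathfrak{p}}}+\mathrm{amp}M\geq\mathrm{grade}_AM-\mathrm{inf}A=\mathrm{projdim}_AM\geq\mathrm{projdim}_{A_{\bar{\mathfrak{p}}}}M_{\bar{\mathfrak{p}}}.$$
So every term is equal. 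This yields both $\mathrm{projdim}_{A_{\bar{\mathfrak{p}}}}M_{\bar{\mathfrak{p}}}=\mathrm{projdim}_AM$ and $\mathrm{grade}_AM=\mathrm{depth}A_{\bar{\mathfrak{p}}}-\mathrm{sup}M_{\bar{\mathfrak{p}}}$.

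\emph{(2)$\Rightarrow$(1).} Assume (2). Then
$$\mathrm{depth}A_{\bar{\mathfrak{p}}}-\mathrm{depth}M_{\bar{\mathfrak{p}}}=\mathrm{projdim}_AM=\mathrm{grade}_AM-\mathrm{inf}A=\mathrm{depth}A_{\bar{\mathfrak{p}}}-\mathrm{sup}M_{\bar{\mathfrak{p}}}+\mathrm{amp}M_{\bar{\mathfrak{p}}}.$$
Hence $\mathrm{depth}M_{\bar{\mathfrak{p}}}=\mathrm{inf}M_{\bar{\mathfrak{p}}}$, so $\bar{\mathfrak{p}}\in\mathrm{Ass}_AM$. (Here $\bar{\mathfrak{p}}\in\mathrm{Supp}_AM$, since the quantities are finite.) This direction uses the identity $\mathrm{amp}M=-\mathrm{inf}A$ in full, so that identity is the crux of the whole proof.
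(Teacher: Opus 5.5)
Your chain is the same as the paper's: Auslander--Buchsbaum at $\bar{\mathfrak p}$, the equality $(\ast)$, constant amplitude, and $\mathrm{projdim}_{A_{\bar{\mathfrak p}}}M_{\bar{\mathfrak p}}\leq\mathrm{projdim}_AM$. Your (1)$\Rightarrow$(2) is correct using only the amplitude inequality $\mathrm{amp}M\geq-\mathrm{inf}A$, which is what \cite[Proposition 4.1]{Y25} provides. The two inequalities $\mathrm{depth}A_{\bar{\mathfrak p}}-\mathrm{sup}M_{\bar{\mathfrak p}}\geq\mathrm{grade}_AM$ and $\mathrm{amp}M\geq-\mathrm{inf}A$ point the same way, so equality of their sum forces equality in each.

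The gap is the reverse bound $\mathrm{amp}M\leq-\mathrm{inf}A$. The direction (2)$\Rightarrow$(1) genuinely needs it, and neither of your justifications supplies it:
\begin{itemize}
\item \cite[Proposition 4.1]{Y25} is a lower bound on amplitude, not an upper bound.
\item The inequality $\mathrm{amp}M\leq\mathrm{projdim}_AM-\mathrm{grade}_AM$ is false for perfect DG-modules in general.
\item Double duality only returns $\mathrm{amp}\,\mathrm{RHom}_A(M,A)=\mathrm{amp}A$, which says nothing about $\mathrm{amp}M$.
\end{itemize}
For a concrete failure, take $A=k[[x,y]]$ and $M=A/(x)[-1]\oplus k[-2]$. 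Then $\mathrm{RHom}_A(M,A)\simeq A/(x)\oplus k$, so $\mathrm{grade}_AM=0=\mathrm{projdim}_AM$ and $M$ is perfect, yet $\mathrm{amp}M=1>0=-\mathrm{inf}A$. Moreover (2) holds at $\bar{\mathfrak m}$, since $\mathrm{depth}A-\mathrm{sup}M=2-2=\mathrm{grade}_AM$. But $\mathrm{depth}_AM=2>1=\mathrm{inf}M$, so $\bar{\mathfrak m}\notin\mathrm{Ass}_AM$. This $M$ fails constant amplitude only at $(x)$, where $\mathrm{amp}M_{(x)}=0$. So any valid proof of $\mathrm{amp}M\leq-\mathrm{inf}A$ must use constant amplitude at primes other than the one under consideration, and your arguments do not.

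The missing step comes from your own (1)$\Rightarrow$(2) chain, applied at a suitable prime. As $M$ is perfect, $M\not\simeq0$; choose $\bar{\mathfrak q}\in\mathrm{Ass}_{\mathrm{H}^0(A)}\mathrm{H}^{\mathrm{inf}M}(M)$. Then $\mathrm{inf}M_{\bar{\mathfrak q}}=\mathrm{inf}M$. The lowest cohomology of $\mathrm{RHom}_{A_{\bar{\mathfrak q}}}(\kappa(\bar{\mathfrak q}),M_{\bar{\mathfrak q}})$ is $\mathrm{Hom}(\kappa(\bar{\mathfrak q}),\mathrm{H}^{\mathrm{inf}M}(M)_{\bar{\mathfrak q}})\neq0$, so $\bar{\mathfrak q}\in\mathrm{Ass}_AM$. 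Running (1)$\Rightarrow$(2) at $\bar{\mathfrak q}$ with only $\mathrm{amp}M\geq-\mathrm{inf}A$ turns every inequality into an equality; in particular $\mathrm{amp}M=-\mathrm{inf}A$. After that, your (2)$\Rightarrow$(1) computation goes through verbatim.

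The paper's proof also records only $\mathrm{amp}M\geq-\mathrm{inf}A$. Its final ``if and only if'' in the direction (2)$\Rightarrow$(1) tacitly needs this same equality, so this associated-prime step is what completes both arguments.
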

\begin{proof} As $M$ is perfect, $\mathrm{projdim}_AM<\infty$, so $\mathrm{amp}M\geq-\mathrm{inf}A$ by \cite[Proposition 4.1]{Y25}. As $M$ has constant amplitude, $\mathrm{inf}M_{\bar{\mathfrak{p}}}=\mathrm{inf}M$ and $\mathrm{sup}M_{\bar{\mathfrak{p}}}=\mathrm{sup}M$.
One has inequalities $\mathrm{grade}_AM\leq\mathrm{depth}A_{\bar{\mathfrak{p}}}-\mathrm{sup}M_{\bar{\mathfrak{p}}}=
\mathrm{projdim}_{A_{\bar{\mathfrak{p}}}}M_{\bar{\mathfrak{p}}}+\mathrm{depth}_{A_{\bar{\mathfrak{p}}}}M_{\bar{\mathfrak{p}}}-\mathrm{sup}M_{\bar{\mathfrak{p}}}
\leq\mathrm{projdim}_AM+\mathrm{depth}_{A_{\bar{\mathfrak{p}}}}M_{\bar{\mathfrak{p}}}-\mathrm{inf}M-\mathrm{amp}M\leq
\mathrm{projdim}_AM+\mathrm{depth}_{A_{\bar{\mathfrak{p}}}}M_{\bar{\mathfrak{p}}}-\mathrm{inf}M+\mathrm{inf}A$.   Thus $\bar{\mathfrak{p}}\in\mathrm{Ass}_AM$ if and only if $\mathrm{depth}_{A_{\bar{\mathfrak{p}}}}M_{\bar{\mathfrak{p}}}-\mathrm{inf}M_{\bar{\mathfrak{p}}}=0$ if and only if $\mathrm{depth}A_{\bar{\mathfrak{p}}}-\mathrm{sup}M_{\bar{\mathfrak{p}}}=\mathrm{grade}_AM$ and $\mathrm{projdim}_{A_{\bar{\mathfrak{p}}}}M_{\bar{\mathfrak{p}}}=\mathrm{projdim}_AM$.
\end{proof}

The next corollary provides criteria for the DG-ring $A$ to be Cohen-Macaulay.

\begin{cor}\label{lem2.10} Let $A$ be a local DG-ring  with constant amplitude. The next are equivalent:

$(1)$ $A$ is local Cohen-Macaulay;

$(2)$ There exists a local Cohen-Macaulay DG-module $M\not\simeq0$ with $\mathrm{projdim}_AM<\infty$ and
$\mathrm{lc.dim}_AM=\mathrm{lc.dim}A-\mathrm{grade}_AM+\mathrm{inf}A$;

$(3)$ There exists a local Cohen-Macaulay DG-module $M\not\simeq0$ with $\mathrm{projdim}_AM<\infty$ and
$\mathrm{lc.dim}A=\mathrm{dim}\mathrm{H}^0(A)/\bar{\mathfrak{p}}+\mathrm{depth}A_{\bar{\mathfrak{p}}}-\mathrm{inf}A$ for some $\bar{\mathfrak{p}}\in\mathrm{Ass}_AM$.
\end{cor}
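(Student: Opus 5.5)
We prove the cycle (1) $\Rightarrow$ (2) $\Rightarrow$ (3) $\Rightarrow$ (1), using Lemma \ref{lem2.4}, Lemma \ref{lem0.15}(3), Remark \ref{lem:2.2}(4) and Proposition \ref{lem2.9}.

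For (1) $\Rightarrow$ (2), take $M=A$. Then $\mathrm{projdim}_AA=0$, and $A$ is local Cohen-Macaulay because $\mathrm{amp}A=\mathrm{amp}\mathrm{R}\Gamma_{\bar{\mathfrak{m}}}(A)$. Also $\mathrm{grade}_AA=\mathrm{inf}A$, so the required identity reads $\mathrm{lc.dim}A=\mathrm{lc.dim}A-\mathrm{inf}A+\mathrm{inf}A$, which is trivial.

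For (2) $\Rightarrow$ (3):
\begin{itemize}
\item By Remark \ref{lem:2.2}(4), $M$ is perfect, so $\mathrm{grade}_AM=\mathrm{projdim}_AM+\mathrm{inf}A$. By Lemma \ref{lem0.15}(3) this equals $\mathrm{depth}A-\mathrm{depth}_AM+\mathrm{inf}A$.
\item Since $M$ is local Cohen-Macaulay, $\mathrm{depth}_AM=\mathrm{lc.dim}_AM+\mathrm{inf}A$. Hence $\mathrm{grade}_AM=\mathrm{depth}A-\mathrm{lc.dim}_AM$.
\item Substituting into the hypothesis gives $\mathrm{lc.dim}A=\mathrm{depth}A-\mathrm{inf}A$.
\item We need an associated prime $\bar{\mathfrak{p}}$ with $\mathrm{dim}\mathrm{H}^0(A)/\bar{\mathfrak{p}}+\mathrm{depth}A_{\bar{\mathfrak{p}}}=\mathrm{depth}A$. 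The natural candidate is $\bar{\mathfrak{m}}$, since $\mathrm{dim}\mathrm{H}^0(A)/\bar{\mathfrak{m}}=0$ and $A_{\bar{\mathfrak{m}}}=A$. The question is whether $\bar{\mathfrak{m}}\in\mathrm{Ass}_AM$, i.e. whether $\mathrm{depth}_AM=\mathrm{inf}M$.
\item This need not hold. So instead use Proposition \ref{lem2.9}, which needs $M$ to have constant amplitude. (Alternatively, $M$ is Cohen-Macaulay, hence equidimensional in the suitable DG sense by Shaul's results, which may give constant amplitude on $\mathrm{Supp}M$.) Proposition \ref{lem2.9} supplies $\bar{\mathfrak{p}}\in\mathrm{Ass}_AM$ with $\mathrm{grade}_AM=\mathrm{depth}A_{\bar{\mathfrak{p}}}-\mathrm{sup}M_{\bar{\mathfrak{p}}}$.
\item Combining this with $\mathrm{grade}_AM=\mathrm{depth}A-\mathrm{lc.dim}_AM$ and the bound $\mathrm{lc.dim}_AM\geq\mathrm{dim}\mathrm{H}^0(A)/\bar{\mathfrak{p}}+\mathrm{sup}M_{\bar{\mathfrak{p}}}$ from \cite[(2.14)]{Shaul} gives $\mathrm{depth}A\geq\mathrm{dim}\mathrm{H}^0(A)/\bar{\mathfrak{p}}+\mathrm{depth}A_{\bar{\mathfrak{p}}}$.
\item \cite[Lemma 2.26]{Mi19} gives the reverse inequality, so equality holds and (3) follows.
\end{itemize}

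For (3) $\Rightarrow$ (1):
\begin{itemize}
\item Let $\bar{\mathfrak{p}}\in\mathrm{Ass}_AM$ be as in (3). By \cite[Lemma 2.26]{Mi19}, $\mathrm{depth}A\leq\mathrm{depth}A_{\bar{\mathfrak{p}}}+\mathrm{dim}\mathrm{H}^0(A)/\bar{\mathfrak{p}}=\mathrm{lc.dim}A+\mathrm{inf}A$.
\item We need the reverse inequality $\mathrm{depth}A\geq\mathrm{lc.dim}A+\mathrm{inf}A$. Then $\mathrm{amp}\mathrm{R}\Gamma_{\bar{\mathfrak{m}}}(A)=\mathrm{lc.dim}A-\mathrm{depth}A=-\mathrm{inf}A=\mathrm{amp}A$, which is (1).
\item To get it, run the Proposition \ref{lem3.1} chain with the Cohen-Macaulay $M$. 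Lemma \ref{lem2.4}(1) gives $\mathrm{grade}_AM\geq\mathrm{depth}A-\mathrm{lc.dim}_AM$. The Cohen-Macaulay equality together with Lemma \ref{lem0.15}(3) gives $\mathrm{projdim}_AM=\mathrm{depth}A-\mathrm{lc.dim}_AM-\mathrm{inf}A$. The upper bound of Lemma \ref{lem2.4}(3) then makes $M$ perfect with $\mathrm{grade}_AM=\mathrm{depth}A-\mathrm{lc.dim}_AM$.
\item Finally, Proposition \ref{lem2.9} should be used to compare $\mathrm{grade}_AM$ with $\mathrm{depth}A_{\bar{\mathfrak{p}}}-\mathrm{sup}M_{\bar{\mathfrak{p}}}$ and with the maximality of $\mathrm{lc.dim}$ along $\bar{\mathfrak{p}}$, to pin down $\mathrm{depth}A=\mathrm{lc.dim}A+\mathrm{inf}A$.
\end{itemize}

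The main obstacle is this last step in (3) $\Rightarrow$ (1), and the matching step in (2) $\Rightarrow$ (3): extracting an exact equality from the single associated prime $\bar{\mathfrak{p}}$. Both require showing that $\bar{\mathfrak{p}}$ realises both $\mathrm{grade}_AM$ and $\mathrm{lc.dim}_AM$, so that Proposition \ref{lem2.9} and Shaul's formula (2.14) become equalities there. The first thing to try is to show that a local Cohen-Macaulay $M$ satisfies $\mathrm{dim}\mathrm{H}^0(A)/\bar{\mathfrak{p}}+\mathrm{sup}M_{\bar{\mathfrak{p}}}=\mathrm{lc.dim}_AM$ for every associated prime $\bar{\mathfrak{p}}$. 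This is the DG analogue of the classical fact that Cohen-Macaulay modules have no embedded primes, with $\mathrm{dim}R/\mathfrak{p}=\mathrm{dim}M$ for all $\mathfrak{p}\in\mathrm{Ass}M$.
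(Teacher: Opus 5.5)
\textbf{The gap is in (3)$\Rightarrow$(1), which you leave unfinished.} Your (1)$\Rightarrow$(2) is fine. The first three bullets of (2)$\Rightarrow$(3) are essentially the paper's proof of (2)$\Rightarrow$(1). The rest of (2)$\Rightarrow$(3) goes through modulo the two points in the second paragraph; applied to $M=A$, it also supplies what the paper's ``trivial'' (1)$\Rightarrow$(3) glosses over.

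In (3)$\Rightarrow$(1) you stop at ``Proposition \ref{lem2.9} should be used to \dots\ pin down'', because you think the given $\bar{\mathfrak{p}}$ must first be shown to realise $\mathrm{lc.dim}_AM$. That misreads the direction needed.
\begin{itemize}
\item From (3) and \cite[Lemma 2.26]{Mi19} you already have $\mathrm{depth}A\leq\mathrm{lc.dim}A+\mathrm{inf}A$.
\item What is missing is $\mathrm{dim}\mathrm{H}^0(A)/\bar{\mathfrak{p}}+\mathrm{depth}A_{\bar{\mathfrak{p}}}\leq\mathrm{depth}A$. For this, the bound $\mathrm{dim}\mathrm{H}^0(A)/\bar{\mathfrak{p}}+\mathrm{sup}M_{\bar{\mathfrak{p}}}\leq\mathrm{lc.dim}_AM$ of \cite[(2.14)]{Shaul} already points the right way.
\item Proposition \ref{lem2.9} is a characterisation, not an existence statement. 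It gives $\mathrm{depth}A_{\bar{\mathfrak{p}}}-\mathrm{sup}M_{\bar{\mathfrak{p}}}=\mathrm{grade}_AM$ for \emph{every} associated prime of the perfect DG-module $M$, in particular for the one in (3).
\end{itemize}
Hence
\begin{align*}
\mathrm{lc.dim}A+\mathrm{inf}A&=\mathrm{dim}\mathrm{H}^0(A)/\bar{\mathfrak{p}}+\mathrm{depth}A_{\bar{\mathfrak{p}}}=\bigl(\mathrm{dim}\mathrm{H}^0(A)/\bar{\mathfrak{p}}+\mathrm{sup}M_{\bar{\mathfrak{p}}}\bigr)+\mathrm{grade}_AM\\
&\leq\mathrm{lc.dim}_AM+\mathrm{grade}_AM=\mathrm{depth}A.
\end{align*}
The last equality is your own computation: $M$ is perfect by Remark \ref{lem:2.2}(4), $\mathrm{depth}_AM=\mathrm{lc.dim}_AM+\mathrm{inf}A$, and Lemma \ref{lem0.15}(3) applies. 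So $\mathrm{depth}A=\mathrm{lc.dim}A+\mathrm{inf}A$, i.e.\ $\mathrm{amp}\mathrm{R}\Gamma_{\bar{\mathfrak{m}}}(A)=\mathrm{amp}A$.

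This is the same chain you wrote in (2)$\Rightarrow$(3), applied to the prime you are given. Your proposed ``no embedded primes'' lemma is true here, but it follows from this chain together with \cite[Lemma 2.26]{Mi19}, so it is a detour. The paper's own (3)$\Rightarrow$(1) writes this step as an equality $\mathrm{lc.dim}_AM=\mathrm{dim}\mathrm{H}^0(A)/\bar{\mathfrak{p}}+\mathrm{sup}M_{\bar{\mathfrak{p}}}$ at that prime, without justification; the inequality is all that is needed.

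\textbf{Two smaller points.} First, Proposition \ref{lem2.9} assumes $M$ has constant amplitude, which you leave open; equidimensionality is not the right tool. The hypothesis is automatic. For $\bar{\mathfrak{p}}\in\mathrm{Supp}_AM$, $M_{\bar{\mathfrak{p}}}$ has finite projective dimension over $A_{\bar{\mathfrak{p}}}$. So \cite[Proposition 4.1]{Y25} and the constant amplitude of $A$ give $\mathrm{amp}A=\mathrm{amp}A_{\bar{\mathfrak{p}}}\leq\mathrm{amp}M_{\bar{\mathfrak{p}}}\leq\mathrm{amp}M=\mathrm{amp}A$.

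Alternatively, bypass the proposition. For $\bar{\mathfrak{p}}\in\mathrm{Ass}_AM$, Lemma \ref{lem0.15}(3) over $A_{\bar{\mathfrak{p}}}$ gives $\mathrm{depth}A_{\bar{\mathfrak{p}}}-\mathrm{sup}M_{\bar{\mathfrak{p}}}=\mathrm{projdim}_{A_{\bar{\mathfrak{p}}}}M_{\bar{\mathfrak{p}}}-\mathrm{amp}M_{\bar{\mathfrak{p}}}\leq\mathrm{projdim}_AM+\mathrm{inf}A=\mathrm{grade}_AM$. The equality $(\ast)$ gives the reverse inequality.

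Second, in (2)$\Rightarrow$(3) you need $\mathrm{Ass}_AM\neq\emptyset$, which Proposition \ref{lem2.9} does not provide. Take any prime minimal in $\mathrm{Supp}_{\mathrm{H}^0(A)}\mathrm{H}^{\mathrm{inf}M}(M)$; such a prime is associated to $M$.
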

\begin{proof} The assertions (1) $\Rightarrow$ (2) and (1) $\Rightarrow$ (3) are trivial by setting $M=A$.

(2) $\Rightarrow$ (1) By Remark \ref{lem:2.2}(4), $M$ is perfect, one has $\mathrm{lc.dim}A-\mathrm{projdim}_AM=\mathrm{lc.dim}A-\mathrm{grade}_AM+\mathrm{inf}A=\mathrm{lc.dim}_AM=\mathrm{depth}_AM-\mathrm{inf}A=\mathrm{depth}A-\mathrm{projdim}_AM-\mathrm{inf}A$ by Lemma \ref{lem0.15}(3). Hence $A$ is local Cohen-Macaulay.

(3) $\Rightarrow$ (1) There exists $\bar{\mathfrak{p}}\in\mathrm{Ass}_AM$ such that $\mathrm{lc.dim}_AM=\mathrm{dim}\mathrm{H}^0(A)/\bar{\mathfrak{p}}+\mathrm{sup}M_{\bar{\mathfrak{p}}}$. Then  $\mathrm{depth}A_{\bar{\mathfrak{p}}}-\mathrm{sup}M_{\bar{\mathfrak{p}}}=\mathrm{grade}_AM$ by Proposition \ref{lem2.9}. As $M$ is perfect by Remark \ref{lem:2.2}(4), it implies that $\mathrm{lc.dim}A+\mathrm{inf}A=\mathrm{dim}\mathrm{H}^0(A)/\bar{\mathfrak{p}}+\mathrm{depth}A_{\bar{\mathfrak{p}}}=\mathrm{lc.dim}_AM+\mathrm{grade}_AM
=\mathrm{depth}_AM+\mathrm{projdim}_AM=\mathrm{depth}A$, as claimed.
\end{proof}

\bigskip
\section{\bf Grade of DG-modules}
This section, we study the grade of DG-modules of finite injective dimension. Using the Cohen-Macaulayness of the DG-module $M/\hspace{-0.15cm}/\bar{\emph{\textbf{x}}}$, we explore the preservation of Cohen-Macaulayness under tensor products in the DG context.

\begin{lem}\label{lem3.9} Let $A$ be a local DG-ring with constant amplitude, $\bar{\textbf{x}}=\bar{x}_1,\cdots,\bar{x}_n\in\bar{\mathfrak{m}}$ a finite sequence and $M\in\mathrm{D}^{\mathrm{b}}_{\mathrm{f}}(A)$. Then

$(1)$  $\mathrm{grade}_{A/\hspace{-0.1cm}/\bar{x}}M/\hspace{-0.15cm}/\bar{\textbf{x}}=\mathrm{grade}_AM/\hspace{-0.15cm}/\bar{x}-n$.

$(2)$  $\mathrm{grade}_{A}M\leq\mathrm{grade}_AM/\hspace{-0.15cm}/\bar{x}\leq\mathrm{grade}_{A}M+n$.

$(3)$ If $\bar{\textbf{x}}$ is $A$-regular, then $\mathrm{grade}_AM/\hspace{-0.15cm}/\bar{\textbf{x}}=\mathrm{grade}_AM$.
\end{lem}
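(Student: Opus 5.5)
The plan is to reduce everything to a single Koszul self-duality isomorphism together with the adjunction for the DG-ring map $A\to A/\hspace{-0.1cm}/\bar{\textbf{x}}$. Write $K=K(\bar{\textbf{x}})$ for the Koszul DG-module over $A$, so that $M/\hspace{-0.15cm}/\bar{\textbf{x}}\simeq M\otimes^\mathrm{L}_AK$ and $A/\hspace{-0.1cm}/\bar{\textbf{x}}\simeq K$. Since $K$ is a finite semi-free DG-module that is self-dual up to shift, $\mathrm{RHom}_A(K,A)\simeq K[-n]$. Tensor–hom adjunction then gives
\[
\mathrm{RHom}_A(M/\hspace{-0.15cm}/\bar{\textbf{x}},A)\simeq\mathrm{RHom}_A(M,\mathrm{RHom}_A(K,A))\simeq\mathrm{RHom}_A(M,A)/\hspace{-0.15cm}/\bar{\textbf{x}}\,[-n].
\]
Every part of the lemma will be read off from this by taking $\mathrm{inf}$. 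I will argue by induction on $n$, so that only the case of a single element $\bar{x}$ and the cone of $\bar{x}\colon X\to X$ has to be analysed.

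For (1), use the base-change adjunction
\[
\mathrm{RHom}_{A/\hspace{-0.1cm}/\bar{\textbf{x}}}\bigl(M\otimes^\mathrm{L}_AK,\;K\bigr)\simeq\mathrm{RHom}_A(M,K)\simeq\mathrm{RHom}_A(M,A)\otimes^\mathrm{L}_AK.
\]
Comparing with the display above, the two complexes $\mathrm{RHom}_{A/\hspace{-0.1cm}/\bar{\textbf{x}}}(M/\hspace{-0.15cm}/\bar{\textbf{x}},A/\hspace{-0.1cm}/\bar{\textbf{x}})$ and $\mathrm{RHom}_A(M/\hspace{-0.15cm}/\bar{\textbf{x}},A)$ differ exactly by the shift $[-n]$. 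Taking infima gives the shift by $n$ in (1). The last isomorphism uses that $M$ has a resolution by finite semi-free DG-modules, which holds because $M\in\mathrm{D}^{\mathrm{b}}_{\mathrm{f}}(A)$.

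For (2), put $X=\mathrm{RHom}_A(M,A)$. For a single element, the long exact cohomology sequence of $X\xrightarrow{\bar{x}}X\to X/\hspace{-0.15cm}/\bar{x}$ shows $\mathrm{inf}X-1\leq\mathrm{inf}(X/\hspace{-0.15cm}/\bar{x})$. Moreover, the degree $\mathrm{inf}X$ part of the sequence shows that $\mathrm{H}^{\mathrm{inf}X}(X)/\bar{x}\mathrm{H}^{\mathrm{inf}X}(X)$ injects into $\mathrm{H}^{\mathrm{inf}X}(X/\hspace{-0.15cm}/\bar{x})$. This quotient is nonzero by Nakayama, since $\mathrm{H}^{\mathrm{inf}X}(X)$ is finitely generated over the local ring $\mathrm{H}^0(A)$ and $\bar{x}\in\bar{\mathfrak{m}}$; hence $\mathrm{inf}(X/\hspace{-0.15cm}/\bar{x})\leq\mathrm{inf}X$. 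Iterating over the $n$ elements, and adding $n$ from the shift, gives
\[
\mathrm{grade}_AM\leq\mathrm{grade}_AM/\hspace{-0.15cm}/\bar{\textbf{x}}\leq\mathrm{grade}_AM+n.
\]

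For (3), with $\bar{\textbf{x}}$ $A$-regular, I would compute the grade on the Koszul side using (1) and the hypothesis. Regularity gives $\mathrm{amp}(A/\hspace{-0.1cm}/\bar{\textbf{x}})=\mathrm{amp}A$ by \cite[Lemma 2.13]{Mi19}. It also gives $\mathrm{depth}(A/\hspace{-0.1cm}/\bar{\textbf{x}})_{\bar{\mathfrak{p}}}=\mathrm{depth}A_{\bar{\mathfrak{p}}}-n$ for primes containing $\bar{\textbf{x}}$. I then plan to apply formula $(\ast)$ prime by prime to $M/\hspace{-0.15cm}/\bar{\textbf{x}}$ over $A$, identifying $\mathrm{Supp}$ and the local suprema via $\mathrm{sup}(M/\hspace{-0.15cm}/\bar{\textbf{x}})_{\bar{\mathfrak{p}}}=\mathrm{sup}M_{\bar{\mathfrak{p}}}$, which is Nakayama again. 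The aim is to match $\mathrm{grade}_AM/\hspace{-0.15cm}/\bar{\textbf{x}}$ term by term with the infimum in $(\ast)$ for $M$.

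\textbf{This is the step I expect to be the main obstacle}, because the bookkeeping of the shift $n$ is delicate. In the ordinary case $M=A$, $n=1$, the element $x$ regular, one has $\mathrm{grade}_A(A/xA)=1$ while $\mathrm{grade}_AA=0$. So the equality in (3) as worded can only be reached if the shift contributed by $\mathrm{RHom}_A(K,A)\simeq K[-n]$ is absorbed, as happens on the $A/\hspace{-0.1cm}/\bar{\textbf{x}}$-side in (1). I would therefore first check carefully the normalisation of $\mathrm{grade}$ and of $M/\hspace{-0.15cm}/\bar{\textbf{x}}$ intended in (3) against this example before finalising that step.
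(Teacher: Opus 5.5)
Your treatment of (1) and (2) is correct and essentially follows the paper. The paper also uses the Koszul self-duality $\mathrm{RHom}_A(A/\hspace{-0.15cm}/\bar{\mathbf{x}},A)\simeq A/\hspace{-0.15cm}/\bar{\mathbf{x}}[-n]$ together with base change (both via \cite[Proposition 2.12]{sh20}), and reads (1) off the two resulting isomorphisms. For (2) the paper gets the lower bound from $\mathrm{projdim}_AA/\hspace{-0.15cm}/\bar{\mathbf{x}}\leq n$ and \cite[Proposition 3.14]{ya20}, and the upper bound from \cite[Lemma 2.13(1)]{Mi19}. Your cone long exact sequence plus Nakayama, iterated over the $n$ elements, proves the same two bounds directly. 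One small correction: $\mathrm{RHom}_A(M,K)\simeq\mathrm{RHom}_A(M,A)\otimes^{\mathrm{L}}_AK$ holds for every $M$ because $K$ is finite semi-free (compact). It does not come from a finite semi-free resolution of $M$, which an object of $\mathrm{D}^{\mathrm{b}}_{\mathrm{f}}(A)$ need not have.

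Your suspicion about (3) is justified, and it is not a normalisation issue: your example is a genuine counterexample.
\begin{itemize}
\item Take $A=k[[t]]$ (constant amplitude $0$), $M=A$, $n=1$ and $\bar{x}=t$.
\item Then $M/\hspace{-0.15cm}/t\simeq k$, and $\mathrm{grade}_Ak=1\neq0=\mathrm{grade}_AA$.
\end{itemize}
So (3) is false as stated, and no proof can exist.

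The paper's argument breaks at its third equality. There it replaces $\mathrm{depth}(A/\hspace{-0.15cm}/\bar{\mathbf{x}})_{\bar{\mathfrak{p}}}$ by $\mathrm{depth}A_{\bar{\mathfrak{p}}}-n$ for every prime. This is valid only for $\bar{\mathfrak{p}}\supseteq(\bar{\mathbf{x}})$. At the remaining primes of $\mathrm{Supp}_AM$ one has $(A/\hspace{-0.15cm}/\bar{\mathbf{x}})_{\bar{\mathfrak{p}}}\simeq0$, which contributes $+\infty$. Hence the left-hand infimum runs over a smaller set than the one defining $\mathrm{grade}_AM$.

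The same point is where your own plan would fail: you cannot ``identify Supp''. For $\bar{\mathfrak{p}}\notin V(\bar{\mathbf{x}})$ (the primes not containing all $\bar{x}_i$), some $\bar{x}_i$ is a unit in $\mathrm{H}^0(A)_{\bar{\mathfrak{p}}}$, so multiplication by it on $M_{\bar{\mathfrak{p}}}$ is a quasi-isomorphism and $(M/\hspace{-0.15cm}/\bar{\mathbf{x}})_{\bar{\mathfrak{p}}}\simeq0$. Thus $\mathrm{Supp}_A(M/\hspace{-0.15cm}/\bar{\mathbf{x}})=\mathrm{Supp}_AM\cap V(\bar{\mathbf{x}})$. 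Moreover, Nakayama gives $\mathrm{sup}(M/\hspace{-0.15cm}/\bar{\mathbf{x}})_{\bar{\mathfrak{p}}}=\mathrm{sup}M_{\bar{\mathfrak{p}}}$ only on that set.

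With this correction, applying $(\ast)$ to $M/\hspace{-0.15cm}/\bar{\mathbf{x}}$ gives, with no regularity hypothesis at all,
\[
\mathrm{grade}_AM/\hspace{-0.15cm}/\bar{\mathbf{x}}=\mathrm{inf}\{\mathrm{depth}A_{\bar{\mathfrak{p}}}-\mathrm{sup}M_{\bar{\mathfrak{p}}}\mid\bar{\mathfrak{p}}\in\mathrm{Supp}_AM\cap V(\bar{\mathbf{x}})\}\geq\mathrm{grade}_AM.
\]
Equality holds exactly when the infimum in $(\ast)$ for $M$ is attained at a prime containing $\bar{\mathbf{x}}$, for instance when $\mathrm{Supp}_AM\subseteq V(\bar{\mathbf{x}})$. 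That is the version of (3) you should prove. It is also all that is needed where the lemma is used in Theorem~\ref{lem2.6}: there $\bar{x}$ annihilates $\mathrm{H}^{\mathrm{sup}M}(M)$, and constant amplitude forces $\mathrm{Supp}_AM=\mathrm{Supp}\,\mathrm{H}^{\mathrm{sup}M}(M)\subseteq V(\bar{x})$.
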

\begin{proof} As $M/\hspace{-0.15cm}/\bar{\emph{\textbf{x}}}\simeq A/\hspace{-0.15cm}/\bar{\emph{\textbf{x}}}\otimes^\mathrm{L}_AM$, by \cite[Proposition 2.12]{sh20}, one has isomorphisms in $\mathrm{D}(A)$
$$\mathrm{RHom}_{A/\hspace{-0.1cm}/\bar{\emph{\textbf{x}}}}(M/\hspace{-0.15cm}/\bar{\emph{\textbf{x}}},A/\hspace{-0.15cm}/\bar{\emph{\textbf{x}}})
\simeq\mathrm{RHom}_A(M,A/\hspace{-0.15cm}/\bar{\emph{\textbf{x}}}),$$ $$\mathrm{RHom}_A(M/\hspace{-0.15cm}/\bar{\emph{\textbf{x}}},A)\simeq\mathrm{RHom}_A(M,\mathrm{RHom}_A(A/\hspace{-0.15cm}/\bar{\emph{\textbf{x}}},A))
\simeq\mathrm{RHom}_A(M,A/\hspace{-0.15cm}/\bar{\emph{\textbf{x}}})[-n].$$

(1) This follows by the above two isomorphisms.

(2) One has the following inequalities:\begin{center}$\begin{aligned}\mathrm{grade}_AM
&\leq\mathrm{inf}\mathrm{RHom}_A(M,A)-\mathrm{projdim}_AA/\hspace{-0.15cm}/\bar{\emph{\textbf{x}}}+n\\
&\leq\mathrm{inf}(\mathrm{RHom}_A(M,A)\otimes^\mathrm{L}_AA/\hspace{-0.15cm}/\bar{\emph{\textbf{x}}})+n\\
&=\mathrm{inf}\mathrm{RHom}_A(M,A/\hspace{-0.15cm}/\bar{\emph{\textbf{x}}})+n\\
&=\mathrm{inf}\mathrm{RHom}_A(M/\hspace{-0.15cm}/\bar{\emph{\textbf{x}}},A)\\
&=\mathrm{grade}_AM/\hspace{-0.15cm}/\bar{\emph{\textbf{x}}},\end{aligned}$\end{center}
where the first inequality is by $\mathrm{projdim}_AA/\hspace{-0.15cm}/\bar{\emph{\textbf{x}}}\leq n$ and the second one is by \cite[Proposition 3.14]{ya20}, the second equality is by the above isomorphisms. Also by \cite[Lemma 2.13(1)]{Mi19}, $$\mathrm{grade}_AM/\hspace{-0.15cm}/\bar{\emph{\textbf{x}}}=\mathrm{inf}(\mathrm{RHom}_A(M,A)\otimes^\mathrm{L}_AA/\hspace{-0.15cm}/\bar{\emph{\textbf{x}}})+n\leq\mathrm{inf}\mathrm{RHom}_A(M,A)+n
=\mathrm{grade}_{A}M+n.$$

(3) As $\mathrm{Spec}\mathrm{H}^0(A)=\mathrm{Supp}_{\mathrm{H}^0(A)}\mathrm{H}^{\mathrm{inf}A}(A)$, $\bar{x}_1/1,\cdots,\bar{x}_n/1$ is $A_{\bar{\mathfrak{p}}}$-regular for any $\hspace{0.02cm}\bar{\mathfrak{p}}\in\mathrm{Supp}_AM\cap\mathrm{Supp}_AA/\hspace{-0.15cm}/\bar{\emph{\textbf{x}}}$.
By the second isomorphism, one has
\begin{center}$\begin{aligned}\mathrm{grade}_AM/\hspace{-0.15cm}/\bar{\emph{\textbf{x}}}
&=\mathrm{inf}\mathrm{RHom}_A(M,A/\hspace{-0.15cm}/\bar{\emph{\textbf{x}}})+n\\
&=\mathrm{inf}\{\mathrm{depth}(A/\hspace{-0.15cm}/\bar{\emph{\textbf{x}}})_{\bar{\mathfrak{p}}}
-\mathrm{sup}M_{\bar{\mathfrak{p}}}\hspace{0.02cm}|\hspace{0.02cm}\bar{\mathfrak{p}}\in\mathrm{Spec}\mathrm{H}^0(A)\}+n\\
&=\mathrm{inf}\{\mathrm{depth}A_{\bar{\mathfrak{p}}}
-\mathrm{sup}M_{\bar{\mathfrak{p}}}\hspace{0.02cm}-n|\hspace{0.02cm}\bar{\mathfrak{p}}\in\mathrm{Spec}\mathrm{H}^0(A)\}+n\\
&=\mathrm{grade}_AM,\end{aligned}$\end{center}where the second is by the proof of \cite[Lemma 2.3(1)]{Y25}, the third one is by \cite[Lemma 3.6]{YL} and \cite[Lemma 2.13(2)]{Mi19}.
\end{proof}

\begin{rem}\label{lem:2.3} {\rm Let $A$ be a local DG-ring and $M\in\mathrm{D}^{\mathrm{b}}_{\mathrm{f}}(A)$ with $\mathrm{injdim}_AM<\infty$. By \cite[Remark 7.17]{s18}, $\mathrm{injdim}_A\mathrm{R}\Gamma_{\bar{\mathfrak{m}}}(M)\leq\mathrm{injdim}_AM$, so $\mathrm{supR}\Gamma_{\bar{\mathfrak{m}}}(M)\leq\mathrm{sup}M$ by Lemma \ref{lem0.15}(4). Hence \cite[Lemma 2.1(2)]{Y25} implies that $$\mathrm{lc.dim}_AM\leq\mathrm{sup}M\leq\mathrm{injdim}_AM-\mathrm{inf}A
=\mathrm{depth}A+\mathrm{sup}M-\mathrm{inf}A.$$}
\end{rem}

The next result improves the above inequality, it is also a dual version of Lemma \ref{lem0.15}(3).

\begin{thm}\label{lem2.6} Let $A$ be a local DG-ring with constant amplitude and $M\in\mathrm{D}^{\mathrm{b}}_{\mathrm{f}}(A)$ with constant amplitude and $\mathrm{injdim}_AM<\infty$.
Then $$\mathrm{lc.dim}_AM=\mathrm{depth}A-\mathrm{grade}_AM.$$
\end{thm}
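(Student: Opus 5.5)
Both sides of $\mathrm{lc.dim}_AM=\mathrm{depth}A-\mathrm{grade}_AM$ can be written as extrema over $\mathrm{Supp}_AM$, so the plan is to compare them prime by prime. By the equality $(\ast)$,
$$\mathrm{grade}_AM=\mathrm{inf}\{\mathrm{depth}A_{\bar{\mathfrak{p}}}-\mathrm{sup}M_{\bar{\mathfrak{p}}}\mid\bar{\mathfrak{p}}\in\mathrm{Supp}_AM\}.$$
By \cite[Equality (2.14)]{Shaul},
$$\mathrm{lc.dim}_AM=\mathrm{sup}\{\mathrm{dim}\mathrm{H}^0(A)/\bar{\mathfrak{p}}+\mathrm{sup}M_{\bar{\mathfrak{p}}}\mid\bar{\mathfrak{p}}\in\mathrm{Supp}_AM\}.$$
Since $M$ has constant amplitude and is bounded with finite cohomology, $\mathrm{sup}M_{\bar{\mathfrak{p}}}=\mathrm{sup}M$ for every $\bar{\mathfrak{p}}\in\mathrm{Supp}_AM$, as in the proof of Proposition \ref{lem2.9}. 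Hence the claim reduces to showing
$$\mathrm{depth}A=\mathrm{sup}\{\mathrm{dim}\mathrm{H}^0(A)/\bar{\mathfrak{p}}\mid\bar{\mathfrak{p}}\in\mathrm{Supp}_AM\}+\mathrm{inf}\{\mathrm{depth}A_{\bar{\mathfrak{p}}}\mid\bar{\mathfrak{p}}\in\mathrm{Supp}_AM\}.$$

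The inequality $\geq$ is already Lemma \ref{lem2.4}(1), so only $\mathrm{lc.dim}_AM\leq\mathrm{depth}A-\mathrm{grade}_AM$ remains. The key input is the localized form of Lemma \ref{lem0.15}(4). For $\bar{\mathfrak{p}}\in\mathrm{Supp}_AM$, $M_{\bar{\mathfrak{p}}}$ has finite injective dimension over $A_{\bar{\mathfrak{p}}}$, since localization preserves finite injective dimension for objects of $\mathrm{D}^+_\mathrm{f}$; this is the DG analogue of the classical fact and should follow from \cite{BSSW}. Then
$$\mathrm{injdim}_{A_{\bar{\mathfrak{p}}}}M_{\bar{\mathfrak{p}}}=\mathrm{depth}A_{\bar{\mathfrak{p}}}+\mathrm{sup}M_{\bar{\mathfrak{p}}}=\mathrm{depth}A_{\bar{\mathfrak{p}}}+\mathrm{sup}M.$$
Next I would use the DG version of the Chouinard/Bass-type inequality
$$\mathrm{injdim}_AM\geq\mathrm{injdim}_{A_{\bar{\mathfrak{p}}}}M_{\bar{\mathfrak{p}}}+\mathrm{dim}\mathrm{H}^0(A)/\bar{\mathfrak{p}}$$
for $\bar{\mathfrak{p}}\in\mathrm{Supp}_AM$. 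Classically this comes from nonvanishing of Bass numbers: $\mu^i(\bar{\mathfrak{p}},M)\neq0$ forces $\mu^{i+\dim R/\mathfrak{p}}(\mathfrak{m},M)\neq0$. In the DG setting I would obtain it from the corresponding statement for $\mathrm{RHom}_A(\bar{\kappa},-)$ in \cite{Mi19}, or by reducing to $\mathrm{H}^0(A)$ via \cite[Lemma 2.26]{Mi19}-type arguments.

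Combining these with Lemma \ref{lem0.15}(4) for $M$ itself, $\mathrm{injdim}_AM=\mathrm{depth}A+\mathrm{sup}M$, gives for every $\bar{\mathfrak{p}}\in\mathrm{Supp}_AM$:
$$\mathrm{depth}A+\mathrm{sup}M\geq\mathrm{depth}A_{\bar{\mathfrak{p}}}+\mathrm{sup}M+\mathrm{dim}\mathrm{H}^0(A)/\bar{\mathfrak{p}}.$$
Thus $\mathrm{dim}\mathrm{H}^0(A)/\bar{\mathfrak{p}}+\mathrm{sup}M\leq\mathrm{depth}A-(\mathrm{depth}A_{\bar{\mathfrak{p}}}-\mathrm{sup}M)$. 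Choosing $\bar{\mathfrak{p}}$ attaining $\mathrm{lc.dim}_AM$ and bounding $\mathrm{depth}A_{\bar{\mathfrak{p}}}-\mathrm{sup}M$ below by $\mathrm{grade}_AM$ using $(\ast)$ yields $\mathrm{lc.dim}_AM\leq\mathrm{depth}A-\mathrm{grade}_AM$, as required.

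The main obstacle is the DG Bass-type inequality relating $\mathrm{injdim}$ at $\bar{\mathfrak{p}}$ and at $\bar{\mathfrak{m}}$. If it cannot be cited, I would prove it by induction on $\mathrm{dim}\mathrm{H}^0(A)/\bar{\mathfrak{p}}$. The step passes from $\bar{\mathfrak{p}}\subsetneq\bar{\mathfrak{q}}$ with $\mathrm{dim}(\mathrm{H}^0(A)/\bar{\mathfrak{p}})_{\bar{\mathfrak{q}}}=1$, using the exact triangle for $\bar{x}\in\bar{\mathfrak{q}}\setminus\bar{\mathfrak{p}}$ applied to $\mathrm{RHom}$ from a DG-module whose cohomology is $\mathrm{H}^0(A)/\bar{\mathfrak{p}}$. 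Here the constant amplitude hypotheses on $A$ and $M$ are what keep the $\mathrm{sup}$ terms aligned at every step.
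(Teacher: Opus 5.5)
Your proof is correct, but it is organized differently from the paper's.

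\textbf{The paper's route.} The paper proceeds by induction on $\mathrm{grade}_AM\geq\mathrm{inf}A-\mathrm{sup}M$. In the extremal case $\mathrm{grade}_AM=\mathrm{inf}A-\mathrm{sup}M$, it combines the crude bound $\mathrm{sup}M_{\bar{\mathfrak{p}}}\leq\mathrm{injdim}_{A_{\bar{\mathfrak{p}}}}M_{\bar{\mathfrak{p}}}-\mathrm{inf}A_{\bar{\mathfrak{p}}}$ with the same Bass-type inequality you isolate, which it cites from \cite[Lemma 2.26 and Proposition 2.29]{Mi19}. Otherwise it uses prime avoidance to pick an $A$-regular $\bar{x}\in\bar{\mathfrak{m}}$ with $\bar{x}\mathrm{H}^{\mathrm{sup}M}(M)=0$. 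It then passes to $M/\hspace{-0.15cm}/\bar{x}$ over $A/\hspace{-0.15cm}/\bar{x}$, lowers the grade by one via Lemma \ref{lem3.9}, and checks on cohomology that $\mathrm{lc.dim}$ is unchanged.

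\textbf{Your route.} You apply Lemma \ref{lem0.15}(4) over $A_{\bar{\mathfrak{p}}}$ in its sharp form $\mathrm{injdim}_{A_{\bar{\mathfrak{p}}}}M_{\bar{\mathfrak{p}}}=\mathrm{depth}A_{\bar{\mathfrak{p}}}+\mathrm{sup}M_{\bar{\mathfrak{p}}}$; the paper uses this identity only to see that the grade is finite. Together with the Bass inequality and $\mathrm{sup}M_{\bar{\mathfrak{p}}}=\mathrm{sup}M$, it gives $\mathrm{dim}\mathrm{H}^0(A)/\bar{\mathfrak{p}}+\mathrm{depth}A_{\bar{\mathfrak{p}}}\leq\mathrm{depth}A$ for every $\bar{\mathfrak{p}}\in\mathrm{Supp}_AM$. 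That is exactly what is needed at a prime realizing $\mathrm{lc.dim}_AM$.

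\textbf{What your route buys.}
\begin{enumerate}
\item It is a one-step proof with no Koszul bookkeeping. You never have to check that $M/\hspace{-0.15cm}/\bar{x}$ again satisfies the hypotheses over $A/\hspace{-0.15cm}/\bar{x}$ (finite injective dimension, constant amplitude), nor track how depth, support and dimension change. The paper's induction treats these points only briefly.
\item It appears to use constant amplitude only for $M$, not for $A$. The Bass inequality in the uncorrected form you state seems to hold in general. By adjunction and d\'evissage, $\mathrm{injdim}_AM=\mathrm{injdim}_{\mathrm{H}^0(A)}\mathrm{RHom}_A(\mathrm{H}^0(A),M)$, compatibly with localization. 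The classical Bass-number argument then applies to this complex in $\mathrm{D}^+_\mathrm{f}(\mathrm{H}^0(A))$.
\item That same argument is a cleaner justification of the Bass inequality than the induction on $\mathrm{dim}\mathrm{H}^0(A)/\bar{\mathfrak{p}}$ you sketch as a fallback.
\item Combined with \cite[Lemma 2.26]{Mi19}, your inequality upgrades to $\mathrm{depth}A=\mathrm{depth}A_{\bar{\mathfrak{p}}}+\mathrm{dim}\mathrm{H}^0(A)/\bar{\mathfrak{p}}$ on all of $\mathrm{Supp}_AM$.
\end{enumerate}

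\textbf{What the paper's route buys.} It is the DG analogue of the classical reduction modulo a regular element of the annihilator. It needs the Bass-type estimate only in the extremal case, where crude bounds already suffice.
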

\begin{proof} As $\mathrm{injdim}_AM<\infty$, $\mathrm{injdim}_{A_{\bar{\mathfrak{q}}}}M_{\bar{\mathfrak{q}}}<\infty$ for $\bar{\mathfrak{q}}\in\mathrm{Spec}\mathrm{H}^0(A)$. By the equality $(\ast)$, choose $\bar{\mathfrak{p}}\in\mathrm{Spec}\mathrm{H}^0(A)$ such that $\mathrm{grade}_AM=\mathrm{depth}A_{\bar{\mathfrak{p}}}
-\mathrm{sup}M_{\bar{\mathfrak{p}}}$. Then $\mathrm{grade}_AM=\mathrm{injdim}_{A_{\bar{\mathfrak{p}}}}M_{\bar{\mathfrak{p}}}-2\mathrm{sup}M_{\bar{\mathfrak{p}}}<\infty$ by Lemma \ref{lem0.15}(4).
We proceed by induction on $\mathrm{grade}_AM\geq\mathrm{inf}A-\mathrm{sup}M$. If $\mathrm{grade}_AM=\mathrm{inf}A-\mathrm{sup}M$, then
$\mathrm{depth}A\leq\mathrm{lc.dim}_AM-\mathrm{sup}M+\mathrm{inf}A$ by Lemma \ref{lem2.4}(1). On the other hand, for $\bar{\mathfrak{p}}\in\mathrm{Spec}\mathrm{H}^0(A)$, one has the following inequalities
\begin{center}$\begin{aligned}\mathrm{dim}\mathrm{H}^0(A)/\bar{\mathfrak{p}}+\mathrm{sup}M_{\bar{\mathfrak{p}}}
&\leq\mathrm{dim}\mathrm{H}^0(A)/\bar{\mathfrak{p}}+\mathrm{injdim}_{A_{\bar{\mathfrak{p}}}}M_{\bar{\mathfrak{p}}}-\mathrm{inf}A_{\bar{\mathfrak{p}}}\\
&\leq\mathrm{injdim}_{A}M-\mathrm{inf}A,\end{aligned}$\end{center}
where the first one is by \cite[Lemma 2.1(2)]{Y25}, the second one is by \cite[Lemma 2.26 and Proposition 2.29]{Mi19}.
Thus $\mathrm{lc.dim}_AM-\mathrm{sup}M+\mathrm{inf}A\leq\mathrm{injdim}_{A}M-\mathrm{sup}M=\mathrm{depth}A$ by \cite[Equality (2.14)]{Shaul} and Lemma \ref{lem0.15}(4), and the desired equality holds. Now assume that $\mathrm{grade}_AM>\mathrm{inf}A-\mathrm{sup}M$. Then $\mathrm{Hom}_{\mathrm{H}^0(A)}(\mathrm{H}^{\mathrm{sup}M}(M),\mathrm{H}^{\mathrm{inf}A}(A))=0$, it follows by \cite[Theorem 16.6]{M} that there is an $A$-regular element $\bar{x}\in\bar{\mathfrak{m}}$ such that $\bar{x}\mathrm{H}^{\mathrm{sup}M}(M)=0$. As $\mathrm{inf}A/\hspace{-0.15cm}/\bar{x}-\mathrm{sup}M/\hspace{-0.15cm}/\bar{x}=\mathrm{inf}A-\mathrm{sup}M$,  $\mathrm{injdim}_{A/\hspace{-0.1cm}/\bar{x}}M/\hspace{-0.15cm}/\bar{x}<\infty$ and $\mathrm{grade}_{A/\hspace{-0.1cm}/\bar{x}}M/\hspace{-0.15cm}/\bar{x}=\mathrm{grade}_AM-1$ by Lemma \ref{lem3.9}, it follows by the induction that $$\mathrm{lc.dim}_{A/\hspace{-0.1cm}/\bar{x}}M/\hspace{-0.15cm}/\bar{x}=
 \mathrm{depth}A/\hspace{-0.15cm}/\bar{x}-\mathrm{grade}_{A/\hspace{-0.1cm}/\bar{x}}M/\hspace{-0.15cm}/\bar{x}=\mathrm{depth}A-\mathrm{grade}_AM.$$
Consider the exact triangle $M\stackrel{\bar{x}}\rightarrow M\rightarrow M/\hspace{-0.15cm}/\bar{x}\rightarrow M[1]$ in $\mathrm{D}(A)$. For $\ell\in\mathbb{Z}$, one has the following short exact sequence $$0\rightarrow\mathrm{H}^\ell(M)/\bar{x}\mathrm{H}^\ell(M)\rightarrow\mathrm{H}^\ell(M/\hspace{-0.15cm}/\bar{x})\rightarrow
(0:_{\mathrm{H}^{\ell+1}(M)}\bar{x})\rightarrow0,$$it follows by \cite[Proposition 14.2.6]{CFH} that
\begin{center}$\begin{aligned}\mathrm{dim}_{\mathrm{H}^0(A)/(\bar{x})}\mathrm{H}^\ell(M/\hspace{-0.15cm}/\bar{x})
&=\mathrm{max}\{\mathrm{dim}_{\mathrm{H}^0(A)/(\bar{x})}\mathrm{H}^\ell(M)/\bar{x}\mathrm{H}^\ell(M),
\mathrm{dim}_{\mathrm{H}^0(A)/(\bar{x})}(0:_{\mathrm{H}^{\ell+1}(M)}\bar{x})\}\\
&=\mathrm{max}\{\mathrm{dim}_{\mathrm{H}^0(A)}\mathrm{H}^\ell(M)/\bar{x}\mathrm{H}^\ell(M),
\mathrm{dim}_{\mathrm{H}^0(A)}(0:_{\mathrm{H}^{\ell+1}(M)}\bar{x})\}\\
&=\mathrm{dim}_{\mathrm{H}^0(A)}\mathrm{H}^\ell(M/\hspace{-0.15cm}/\bar{x}).\end{aligned}$\end{center}
As $\mathrm{Supp}_AM=\mathrm{Supp}_{\mathrm{H}^{0}(A)}\mathrm{H}^{\mathrm{sup}M}(M)$ and $\bar{x}\mathrm{H}^{\mathrm{sup}M}(M)=0$, it follows that $\mathrm{Supp}_AM=\mathrm{Supp}_AM/\hspace{-0.15cm}/\bar{x}$. Note that $\mathrm{H}^0(A/\hspace{-0.15cm}/\bar{x})\cong\mathrm{H}^0(A)/(\bar{x})$, one has
 \begin{center}$\begin{aligned}\mathrm{lc.dim}_{A/\hspace{-0.1cm}/\bar{x}}M/\hspace{-0.15cm}/\bar{x}
&=\mathrm{sup}\{\mathrm{dim}_{\mathrm{H}^0(A)/(\bar{x})}\mathrm{H}^\ell(M/\hspace{-0.15cm}/\bar{x})+\ell\hspace{0.02cm}|\hspace{0.02cm}\ell\in\mathbb{Z}\}\\
&=\mathrm{sup}\{\mathrm{dim}_{\mathrm{H}^0(A)}\mathrm{H}^\ell(M/\hspace{-0.15cm}/\bar{x})+\ell\hspace{0.02cm}|\hspace{0.02cm}\ell\in\mathbb{Z}\}\\
&=\mathrm{lc.dim}_{A}M/\hspace{-0.15cm}/\bar{x}\\
&=\mathrm{sup}\{\mathrm{dim}\mathrm{H}^0(A)/\bar{\mathfrak{p}}+
\mathrm{sup}(M/\hspace{-0.15cm}/\bar{x})_{\bar{\mathfrak{p}}}\hspace{0.02cm}|\hspace{0.02cm}\bar{\mathfrak{p}}\in\mathrm{Supp}_AM\}\\
&=\mathrm{sup}\{\mathrm{dim}\mathrm{H}^0(A)/\bar{\mathfrak{p}}+
\mathrm{sup}M_{\bar{\mathfrak{p}}}\hspace{0.02cm}|\hspace{0.02cm}\bar{\mathfrak{p}}\in\mathrm{Supp}_AM\}\\
&=\mathrm{lc.dim}_{A}M,\end{aligned}$\end{center}
  where the fourth one is by \cite[Equality (2.14)]{Shaul}.
Therefore, $\mathrm{lc.dim}_AM=\mathrm{lc.dim}_{A/\hspace{-0.1cm}/\bar{x}}M/\hspace{-0.15cm}/\bar{x}=\mathrm{depth}A-\mathrm{grade}_AM$.
\end{proof}

The next result is a dual version of Theorem \ref{lem2.5}.

\begin{cor}\label{lem2.8} Let $A$ be a local DG-ring with constant amplitude and $0\not\simeq M\in\mathrm{D}^{\mathrm{b}}_{\mathrm{f}}(A)$ with constant amplitude and $\mathrm{injdim}_AM<\infty$. Then $M$ is perfect with $\mathrm{amp}M\leq\mathrm{amp}\mathrm{R}\Gamma_{\bar{\mathfrak{m}}}(M)$ if and only if $M$ is local Cohen-Macaulay.
\end{cor}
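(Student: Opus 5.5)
The plan is to mirror the proof of Theorem \ref{lem2.5}, using Theorem \ref{lem2.6} in place of the projective-dimension formula of Lemma \ref{lem0.15}(3). Throughout we use that local Cohen-Macaulayness of $M$ means $\mathrm{amp}M=\mathrm{amp}A=\mathrm{amp}\mathrm{R}\Gamma_{\bar{\mathfrak{m}}}(M)=\mathrm{lc.dim}_AM-\mathrm{depth}_AM$. The first step is to establish $\mathrm{projdim}_AM<\infty$ in either direction. When $M$ is perfect this is automatic, since $\mathrm{grade}_AM$ is finite, being $\mathrm{depth}A-\mathrm{lc.dim}_AM$ by Theorem \ref{lem2.6}.

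For the direction where $M$ is local Cohen-Macaulay, we need finite projective dimension from finite injective dimension. The plan is as follows.
\begin{itemize}
\item By Lemma \ref{lem0.15}(4), $\mathrm{injdim}_AM=\mathrm{depth}A+\mathrm{sup}M$.
\item By Theorem \ref{lem2.6}, $\mathrm{grade}_AM=\mathrm{depth}A-\mathrm{lc.dim}_AM$.
\item Combining these with Remark \ref{lem:2.3} and the Cohen-Macaulay equality $\mathrm{lc.dim}_AM-\mathrm{depth}_AM=-\mathrm{inf}A$, we expect $\mathrm{depth}_AM$ to equal $\mathrm{depth}A+\mathrm{inf}M$, so that $M$ behaves like a ``maximal'' Cohen-Macaulay module of finite injective dimension.
\end{itemize}
To obtain $\mathrm{projdim}_AM<\infty$ I would then try the following route. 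Use a normalized dualizing DG-module, obtained after passing to the derived completion if necessary; $M$, having finite injective dimension, should be of the form $P\otimes^{\mathrm{L}}_AR$ by the Foxby-type equivalence \cite[Lemma 2.7]{BSSW}, with $P=\mathrm{RHom}_A(R,M)$ of finite projective dimension. Alternatively, one can avoid projective dimension entirely: read perfectness as the equality $\mathrm{grade}_AM=\mathrm{projdim}_AM+\mathrm{inf}A$ and compute both sides directly. I expect this step, relating finite injective dimension to the value of $\mathrm{projdim}_AM$, to be the main obstacle.

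For the implication ``perfect with $\mathrm{amp}M\leq\mathrm{amp}\mathrm{R}\Gamma_{\bar{\mathfrak{m}}}(M)$ $\Rightarrow$ local Cohen-Macaulay'', argue as in (2)$\Rightarrow$(1) of Theorem \ref{lem2.5}. Theorem \ref{lem2.6} gives $\mathrm{grade}_AM=\mathrm{depth}A-\mathrm{lc.dim}_AM$, and Lemma \ref{lem0.15}(3) gives $\mathrm{projdim}_AM=\mathrm{depth}A-\mathrm{depth}_AM$. Hence
\[
\mathrm{amp}\mathrm{R}\Gamma_{\bar{\mathfrak{m}}}(M)=\mathrm{lc.dim}_AM-\mathrm{depth}_AM=\mathrm{projdim}_AM-\mathrm{grade}_AM=-\mathrm{inf}A.
\]
By \cite[Proposition 4.1]{Y25} we have $-\mathrm{inf}A\leq\mathrm{amp}M$, and by hypothesis $\mathrm{amp}M\leq\mathrm{amp}\mathrm{R}\Gamma_{\bar{\mathfrak{m}}}(M)$. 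These squeeze all amplitudes to $\mathrm{amp}A$, so $M$ is local Cohen-Macaulay.

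For the converse, the amplitude inequality $\mathrm{amp}M\leq\mathrm{amp}\mathrm{R}\Gamma_{\bar{\mathfrak{m}}}(M)$ holds trivially, since both equal $\mathrm{amp}A$. Perfectness follows as in Remark \ref{lem:2.2}(4), once finite projective dimension is in hand. Concretely,
\[
\mathrm{projdim}_AM+\mathrm{inf}A=\mathrm{depth}A-\mathrm{depth}_AM+\mathrm{inf}A=\mathrm{depth}A-\mathrm{lc.dim}_AM=\mathrm{grade}_AM,
\]
where the last equality is Theorem \ref{lem2.6}. This removes the need for the local Cohen-Macaulay hypothesis on $A$ in Lemma \ref{lem2.4}(2).
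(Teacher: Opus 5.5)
Your forward direction (perfect with $\mathrm{amp}M\leq\mathrm{amp}\mathrm{R}\Gamma_{\bar{\mathfrak{m}}}(M)$ $\Rightarrow$ local Cohen--Macaulay) is correct and is essentially the paper's argument. Perfectness and finiteness of the grade give $\mathrm{projdim}_AM<\infty$, and then Theorem~\ref{lem2.6} and Lemma~\ref{lem0.15}(3) give $\mathrm{amp}\mathrm{R}\Gamma_{\bar{\mathfrak{m}}}(M)=\mathrm{projdim}_AM-\mathrm{grade}_AM=-\mathrm{inf}A$. The only difference is the source of $\mathrm{amp}M\geq-\mathrm{inf}A$. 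You get it from finite projective dimension via \cite[Proposition 4.1]{Y25}; the paper gets it from finite injective dimension via \cite[Proposition 4.5]{Y25} and \cite[Corollary 5.5]{Shaul}. Both work.

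The genuine gap is the step you flagged in the converse: nothing supplies $\mathrm{projdim}_AM<\infty$, and neither of your routes can. The Foxby-type equivalence only gives finite projective dimension for $P=\mathrm{RHom}_A(R,M)$, not for $M\simeq P\otimes^{\mathrm{L}}_AR$. For $M=R$ one gets $P=A$, yet $R$ itself need not have finite projective dimension. You also cannot ``avoid projective dimension'': perfectness is an equality involving $\mathrm{projdim}_AM$, and your final display invokes Lemma~\ref{lem0.15}(3), which needs it finite.

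In fact the converse is false as stated, so no argument closes the gap. Let $A$ be an ordinary Cohen--Macaulay local ring with canonical module $\omega_A$ that is not Gorenstein, e.g.\ $A=k[x,y]/(x,y)^2$ and $M=\omega_A=E_A(k)$.
\begin{itemize}
\item $M$ satisfies every hypothesis: constant amplitude $0$, and $\mathrm{injdim}_AM=\dim A<\infty$.
\item $M$ is local Cohen--Macaulay, since $\mathrm{R}\Gamma_{\bar{\mathfrak{m}}}(\omega_A)$ is concentrated in degree $\dim A$.
\item By Theorem~\ref{lem2.6}, $\mathrm{grade}_A\omega_A=\mathrm{depth}A-\mathrm{lc.dim}_A\omega_A=0$.
\item But $\mathrm{projdim}_A\omega_A=\infty$. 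A nonzero finitely generated module of finite projective and injective dimension forces $A$ to be Gorenstein. In the Artinian example, Auslander--Buchsbaum would make $E_A(k)$ free of length $3=\ell(A)$, hence $\cong A$, which is impossible since the socle of $A$ is $2$-dimensional.
\end{itemize}
So $\omega_A$ is not perfect.

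The paper's own proof dismisses the converse by citing Remark~\ref{lem:2.2}(4). That remark explicitly assumes $\mathrm{projdim}_AM<\infty$, so the paper has exactly the same gap. What is true is the converse under the extra hypothesis $\mathrm{projdim}_AM<\infty$. There your computation $\mathrm{projdim}_AM+\mathrm{inf}A=\mathrm{depth}A-\mathrm{lc.dim}_AM=\mathrm{grade}_AM$ is correct.

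Two side warnings about the heuristics you planned to use:
\begin{itemize}
\item Your expectation $\mathrm{depth}_AM=\mathrm{depth}A+\mathrm{inf}M$ fails in general. Take $M=A/(x)$ over a Gorenstein local ring $A$ of positive dimension, with $x$ regular.
\item The inequality $\mathrm{lc.dim}_AM\leq\mathrm{sup}M$ in Remark~\ref{lem:2.3} fails for $\omega_A$ whenever $\dim A>0$.
\end{itemize}
Neither should be fed into this step.
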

\begin{proof} Let $M$ be perfect with $\mathrm{amp}M\leq\mathrm{amp}\mathrm{R}\Gamma_{\bar{\mathfrak{m}}}(M)$. As $\mathrm{injdim}_AM<\infty$, it follows by \cite[Proposition 4.5]{Y25} and \cite[Corollary 5.5]{Shaul} that $\mathrm{amp}M\geq\mathrm{lc.dim}A-\mathrm{depth}A\geq-\mathrm{inf}A$. By Theorem \ref{lem2.6}, $\mathrm{lc.dim}_AM=\mathrm{depth}A-\mathrm{grade}_AM=\mathrm{depth}A-\mathrm{projdim}_AM-\mathrm{inf}A=\mathrm{depth}_AM-\mathrm{inf}A$. Thus $\mathrm{amp}M\leq\mathrm{amp}\mathrm{R}\Gamma_{\bar{\mathfrak{m}}}(M)=-\mathrm{inf}A\leq\mathrm{amp}M$ and so $M$ is local Cohen-Macaulay. The converse follows by Remark \ref{lem:2.2}(4).
\end{proof}

The next theorem provides a condition for $M/\hspace{-0.15cm}/\bar{\emph{\textbf{x}}}$ to be perfect and Cohen-Macaulay.

\begin{thm}\label{lem2.3} Let $(A,\bar{\mathfrak{m}})$ be a local DG-ring and $\bar{\textbf{x}}=\bar{x}_1,\cdots,\bar{x}_n\in\bar{\mathfrak{m}}$ a finite sequence, and let $M\in\mathrm{D}^{\mathrm{b}}_{\mathrm{f}}(A)$.

$(1)$ If $A$ has constant amplitude and $M/\hspace{-0.15cm}/\bar{\textbf{x}}$ is perfect, then $\mathrm{lc.dim}_AM/\hspace{-0.15cm}/\bar{\textbf{x}}=\mathrm{lc.dim}_AM-n$.

$(2)$  If $A$ is local Cohen-Macaulay with constant amplitude, then $M/\hspace{-0.15cm}/\bar{\textbf{x}}$ is perfect if and only if $M$ is perfect and $\mathrm{lc.dim}_AM/\hspace{-0.15cm}/\bar{\textbf{x}}=\mathrm{lc.dim}_AM-n$.

$(3)$ If $M$ is local Cohen-Macaulay with constant amplitude, then  $\mathrm{amp}M/\hspace{-0.15cm}/\bar{\textbf{x}}=\mathrm{ampR}\Gamma_{\mathfrak{\bar{m}}}(M/\hspace{-0.15cm}/\bar{\textbf{x}})$.\\
In particular, $M/\hspace{-0.15cm}/\bar{\textbf{x}}$ is local Cohen-Macaulay over $A/\hspace{-0.15cm}/\bar{\textbf{x}}$ if and only if $\mathrm{amp}M/\hspace{-0.15cm}/\bar{\textbf{x}}=\mathrm{amp}A/\hspace{-0.15cm}/\bar{\textbf{x}}$.
\end{thm}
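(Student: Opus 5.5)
The plan is to compare lc.dim and grade for $M$ and $M/\hspace{-0.15cm}/\bar{\textbf{x}}$, using Lemma \ref{lem3.9} and Lemma \ref{lem2.4} for grade and Lemma \ref{lem0.15} for depth and projective dimension.

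\emph{Part (1).} Since $M/\hspace{-0.15cm}/\bar{\textbf{x}}\simeq A/\hspace{-0.15cm}/\bar{\textbf{x}}\otimes^\mathrm{L}_AM$, the support of $M/\hspace{-0.15cm}/\bar{\textbf{x}}$ lies in $\mathrm{Supp}_AM\cap V(\bar{\textbf{x}})$. For each such $\bar{\mathfrak{p}}$, Nakayama gives $\mathrm{sup}(M/\hspace{-0.15cm}/\bar{\textbf{x}})_{\bar{\mathfrak{p}}}=\mathrm{sup}M_{\bar{\mathfrak{p}}}$. The formula (2.14) of \cite{Shaul} then gives the easy inequality $\mathrm{lc.dim}_AM/\hspace{-0.15cm}/\bar{\textbf{x}}\leq\mathrm{lc.dim}_AM$. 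This is too weak, so instead I use grade. Perfectness of $M/\hspace{-0.15cm}/\bar{\textbf{x}}$ gives $\mathrm{projdim}_AM/\hspace{-0.15cm}/\bar{\textbf{x}}<\infty$. Lemma \ref{lem2.4}(2), applied to $M/\hspace{-0.15cm}/\bar{\textbf{x}}$, gives
\[\mathrm{lc.dim}_AM/\hspace{-0.15cm}/\bar{\textbf{x}}\leq \mathrm{lc.dim}A+\mathrm{inf}A-\mathrm{grade}_AM/\hspace{-0.15cm}/\bar{\textbf{x}}=\mathrm{lc.dim}A-\mathrm{projdim}_AM/\hspace{-0.15cm}/\bar{\textbf{x}}.\]
For the other direction, the inequality of Lemma \ref{lem0.15}(2) with $A/\hspace{-0.15cm}/\bar{\textbf{x}}$ (projdim $\leq n$) only gives $\mathrm{lc.dim}_AM\leq\mathrm{lc.dim}_AM/\hspace{-0.15cm}/\bar{\textbf{x}}+n$, which points the wrong way. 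So I aim for the upper bound $\mathrm{lc.dim}_AM/\hspace{-0.15cm}/\bar{\textbf{x}}\leq\mathrm{lc.dim}_AM-n$ and obtain it from perfectness.

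By Lemma \ref{lem3.9}(2), $\mathrm{grade}_AM/\hspace{-0.15cm}/\bar{\textbf{x}}\leq\mathrm{grade}_AM+n$. It remains to use $\mathrm{projdim}_AM/\hspace{-0.15cm}/\bar{\textbf{x}}=\mathrm{depth}A-\mathrm{depth}_AM/\hspace{-0.15cm}/\bar{\textbf{x}}$ together with the Koszul depth shift $\mathrm{depth}_AM/\hspace{-0.15cm}/\bar{\textbf{x}}=\mathrm{depth}_AM-n$, which follows from Lemma \ref{lem0.15}(1) applied with $N=A/\hspace{-0.15cm}/\bar{\textbf{x}}$ once one notes $\mathrm{depth}A/\hspace{-0.15cm}/\bar{\textbf{x}}=\mathrm{depth}A-n$ (Koszul self-duality). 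Chaining these, $\mathrm{lc.dim}_AM/\hspace{-0.15cm}/\bar{\textbf{x}}+\mathrm{grade}_AM/\hspace{-0.15cm}/\bar{\textbf{x}}$ is sandwiched between $\mathrm{depth}A$ (Lemma \ref{lem2.4}(1)) and $\mathrm{lc.dim}A+\mathrm{inf}A$. Combined with the inequality $\mathrm{lc.dim}_AM\leq\mathrm{lc.dim}_AM/\hspace{-0.15cm}/\bar{\textbf{x}}+n$ above, this should yield the equality.

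\textbf{Main obstacle.} The delicate step is getting the sharp upper bound without assuming $A$ is Cohen-Macaulay. What I would try first is to localize at a prime $\bar{\mathfrak{p}}$ realizing $\mathrm{grade}_AM/\hspace{-0.15cm}/\bar{\textbf{x}}$ and mimic the proof of Lemma \ref{lem2.4}(2).

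\emph{Part (2).} Here $A$ is local Cohen-Macaulay, so Lemma \ref{lem2.4}(2) gives $\mathrm{grade}=\mathrm{depth}A-\mathrm{lc.dim}$ for both modules. Also $\mathrm{projdim}_AM/\hspace{-0.15cm}/\bar{\textbf{x}}=\mathrm{projdim}_AM+n$ by Lemma \ref{lem0.15}(3) and the depth shift; finiteness of the two projective dimensions is equivalent, by the same shift. Perfectness of $M/\hspace{-0.15cm}/\bar{\textbf{x}}$ then reads $\mathrm{depth}A-\mathrm{lc.dim}_AM/\hspace{-0.15cm}/\bar{\textbf{x}}=\mathrm{projdim}_AM+n+\mathrm{inf}A$. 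Perfectness of $M$ reads $\mathrm{depth}A-\mathrm{lc.dim}_AM=\mathrm{projdim}_AM+\mathrm{inf}A$. Given part (1), the two are equivalent exactly under the lc.dim condition.

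\emph{Part (3).} The modules $M$ and $M/\hspace{-0.15cm}/\bar{\textbf{x}}$ have the same supremum. Their depths differ by exactly $n$, via the description of depth as $\mathrm{inf}\,\mathrm{RHom}_A(\bar\kappa,-)$ and self-duality of $A/\hspace{-0.15cm}/\bar{\textbf{x}}$. Their lc.dims differ by at most $n$. Combining these with $\mathrm{amp}\mathrm{R}\Gamma_{\bar{\mathfrak{m}}}=\mathrm{lc.dim}-\mathrm{depth}$ and $\mathrm{amp}M\leq\mathrm{amp}M/\hspace{-0.15cm}/\bar{\textbf{x}}\leq\mathrm{amp}\mathrm{R}\Gamma_{\bar{\mathfrak{m}}}(M/\hspace{-0.15cm}/\bar{\textbf{x}})$ (the latter from \cite[Proposition 4.1]{Y25}-type bounds) forces equality. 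The ``in particular'' then follows from the definition of local Cohen-Macaulay over $A/\hspace{-0.15cm}/\bar{\textbf{x}}$, since $\mathrm{R}\Gamma$ and the lc.dim are computed compatibly over $A/\hspace{-0.15cm}/\bar{\textbf{x}}$, as in the proof of Theorem \ref{lem2.6}.
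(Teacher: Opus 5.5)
There are genuine gaps in (1) and (3). Write $K=M/\hspace{-0.15cm}/\bar{\mathbf{x}}$.

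\textbf{Part (1).} The step you flag as the main obstacle is exactly the missing idea, and your suggested fix will not supply it. The sandwich $\mathrm{depth}A\leq\mathrm{lc.dim}_AK+\mathrm{grade}_AK\leq\mathrm{lc.dim}A+\mathrm{inf}A$ closes only when $\mathrm{depth}A=\mathrm{lc.dim}A+\mathrm{inf}A$, i.e.\ when $A$ is local Cohen-Macaulay. Localizing at a prime realizing the grade, as in the proof of Lemma \ref{lem2.4}, only reproduces bounds of that lemma's type, and their lower bounds for $\mathrm{lc.dim}$ are in terms of $\mathrm{depth}A$, not $\mathrm{lc.dim}A$. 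What is needed is the intersection-type inequality
\[\mathrm{lc.dim}A\leq\mathrm{lc.dim}_AM+\mathrm{projdim}_AM\]
(classically $\dim R-\dim M\leq\mathrm{pd}_RM$). The paper takes it from \cite[Corollary 3.3]{Y25} together with Lemma \ref{lem0.15}(3); it is also Lemma \ref{lem0.15}(2) with $N=A$, the very lemma you applied to $A/\hspace{-0.15cm}/\bar{\mathbf{x}}$. With it, perfectness of $K$, Lemma \ref{lem2.4}(2) and $\mathrm{projdim}_AK=\mathrm{projdim}_AM+n$ give
\[\mathrm{lc.dim}_AK\leq\mathrm{lc.dim}A-\mathrm{projdim}_AK=\mathrm{lc.dim}A-\mathrm{projdim}_AM-n\leq\mathrm{lc.dim}_AM-n.\]
Your bound $\mathrm{lc.dim}_AM\leq\mathrm{lc.dim}_AK+n$ from Lemma \ref{lem0.15}(2) then gives equality. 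That bound actually points the right way, and it is a valid substitute for the paper's appeal to \cite[Proposition A.4]{BH}. The result is in substance the paper's chain.

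Once (1) is repaired, your argument for (2) is fine, with one caveat. The equivalence $\mathrm{projdim}_AM<\infty\Leftrightarrow\mathrm{projdim}_AK<\infty$ does not follow ``by the same shift'', because Lemma \ref{lem0.15}(3) presupposes finiteness; the paper simply asserts this equivalence.

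\textbf{Part (3).} The facts you list cannot force equality. With $\lambda=\mathrm{lc.dim}_AM$ and $\lambda'=\mathrm{lc.dim}_AK$, they give $\mathrm{amp}\mathrm{R}\Gamma_{\bar{\mathfrak{m}}}(K)=\lambda'-\mathrm{depth}_AM+n=\mathrm{amp}M+n-(\lambda-\lambda')$. On the other side, $\mathrm{amp}K=\mathrm{sup}M-\mathrm{inf}K$, and nothing you use determines where $\mathrm{inf}K$ lies in $[\mathrm{inf}M-n,\mathrm{inf}M]$. For instance, if $\lambda'=\lambda$ your constraints allow $\mathrm{amp}K$ to be anything between $\mathrm{amp}M$ and $\mathrm{amp}M+n$.

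Moreover, $\mathrm{amp}X\leq\mathrm{amp}\mathrm{R}\Gamma_{\bar{\mathfrak{m}}}(X)$ is not what \cite[Proposition 4.1]{Y25} provides. That result is an amplitude inequality for $P\otimes^{\mathrm{L}}_AX$ with $\mathrm{projdim}_AP<\infty$, and it gives only $\mathrm{amp}M\leq\mathrm{amp}K$. The inequality also fails in general: over a one-dimensional Cohen-Macaulay local ring $R$ with residue field $k$, $X=R\oplus k[-1]$ has $\mathrm{amp}X=1$ and $\mathrm{amp}\mathrm{R}\Gamma_{\mathfrak{m}}(X)=0$. This is why it appears as an explicit condition in Theorem \ref{lem2.5}(2).

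The missing ingredient, where the Cohen-Macaulay hypothesis on $M$ does the real work, is the exact formula
\[\mathrm{inf}K=\mathrm{inf}M-n+\mathrm{lc.dim}_AM-\mathrm{lc.dim}_AK\]
for local Cohen-Macaulay $M$ with constant amplitude (\cite[Proposition 3.17]{sh21}, \cite[Theorem 3.7(1)]{YL}). It is the DG analogue of $\mathrm{grade}(\mathbf{x},M)=\dim M-\dim M/\mathbf{x}M$ for Cohen-Macaulay modules. Substitute it together with $\mathrm{sup}K=\mathrm{sup}M$, $\mathrm{lc.dim}_AM-\mathrm{sup}M=\mathrm{depth}_AM-\mathrm{inf}M$ and $\mathrm{depth}_AK=\mathrm{depth}_AM-n$. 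This gives $\mathrm{lc.dim}_AK-\mathrm{sup}K=\mathrm{depth}_AK-\mathrm{inf}K$, which is the claim.
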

\begin{proof} By \cite[Proposition A.4]{BH}, one has the following inequalities:
\begin{center}$(\dag)$\ $\begin{aligned}\mathrm{lc.dim}_AM/\hspace{-0.15cm}/\bar{\emph{\textbf{x}}}
&=\mathrm{sup}\{\mathrm{dim}_{\mathrm{H}^0(A)}\mathrm{H}^\ell(M/\hspace{-0.15cm}/\bar{\emph{\textbf{x}}})+\ell\hspace{0.02cm}|\hspace{0.02cm}\ell\in\mathbb{Z}\}\\
&\geq\mathrm{sup}\{\mathrm{dim}_{\mathrm{H}^0(A)}\mathrm{H}^\ell(M)/\bar{\emph{\textbf{x}}}\mathrm{H}^\ell(M)+\ell\hspace{0.02cm}|\hspace{0.02cm}\ell\in\mathbb{Z}\}\\
&\geq\mathrm{sup}\{\mathrm{dim}_{\mathrm{H}^0(A)}\mathrm{H}^\ell(M)+\ell\hspace{0.02cm}|\hspace{0.02cm}\ell\in\mathbb{Z}\}-n\\
&\geq\mathrm{lc.dim}_AM-n.\end{aligned}$\end{center}As $\mathrm{projdim}_AM<\infty$ if and only if $\mathrm{projdim}_AM/\hspace{-0.15cm}/\bar{\emph{\textbf{x}}}<\infty$, it follows by \cite[Proposition 4.4(3)]{ya20} and \cite[Proposition 2.12]{sh20} that $$\mathrm{projdim}_AM/\hspace{-0.15cm}/\bar{\emph{\textbf{x}}}=\mathrm{supRHom}_A(M/\hspace{-0.15cm}/\bar{\emph{\textbf{x}}},A)=\mathrm{supRHom}_A(M,A/\hspace{-0.15cm}/\bar{\emph{\textbf{x}}}[-n])
=\mathrm{projdim}_AM+n.$$

(1) One has the following (in)equalities\begin{center}$\begin{aligned}\mathrm{grade}_AM/\hspace{-0.15cm}/\bar{\emph{\textbf{x}}}
&\leq\mathrm{lc.dim}A-\mathrm{lc.dim}_AM/\hspace{-0.15cm}/\bar{\emph{\textbf{x}}}+\mathrm{inf}A\\
&\leq\mathrm{lc.dim}A-\mathrm{lc.dim}_AM+n+\mathrm{inf}A\\
&\leq\mathrm{depth}A-\mathrm{depth}_AM+n+\mathrm{inf}A\\
&=\mathrm{projdim}_AM+n+\mathrm{inf}A\\
&=\mathrm{projdim}_AM/\hspace{-0.15cm}/\bar{\emph{\textbf{x}}}+\mathrm{inf}A,\end{aligned}$\end{center}
where the first inequality is by Lemma \ref{lem2.4}(2), the second one is by the inequality $(\dag)$, the third one is by \cite[Corollary 3.3]{Y25}. So $\mathrm{lc.dim}_AM/\hspace{-0.15cm}/\bar{\emph{\textbf{x}}}=\mathrm{lc.dim}_AM-n$.

(2) One has the following (in)equalities\begin{center}$\begin{aligned}\mathrm{grade}_AM/\hspace{-0.15cm}/\bar{\emph{\textbf{x}}}
&=\mathrm{depth}A-\mathrm{lc.dim}_AM/\hspace{-0.15cm}/\bar{\emph{\textbf{x}}}\\
&\leq\mathrm{depth}A-\mathrm{lc.dim}_AM+n\\
&=\mathrm{grade}_AM+n\\
&\leq\mathrm{projdim}_AM+n+\mathrm{inf}A\\
&=\mathrm{projdim}_AM/\hspace{-0.15cm}/\bar{\emph{\textbf{x}}}+\mathrm{inf}A,\end{aligned}$\end{center}
where the first and the second equalities are by Lemma \ref{lem2.4}(2), the first inequality is by  the inequality $(\dag)$ and the second one is by Lemma \ref{lem2.4}(3), as required.

(3) By Nakayama's lemma, one has $\mathrm{sup}M/\hspace{-0.15cm}/\bar{\emph{\textbf{x}}}=\mathrm{sup}M$. By \cite[Proposition 3.17]{sh21} and \cite[Theorem 3.7(1)]{YL}, we have $\mathrm{inf}M/\hspace{-0.15cm}/\bar{\emph{\textbf{x}}}
=\mathrm{lc.dim}_AM-\mathrm{lc.dim}_AM/\hspace{-0.15cm}/\bar{\emph{\textbf{x}}}+\mathrm{inf}M-n$. Thus
we have the following equalities\begin{center}$\begin{aligned}\mathrm{lc.dim}_AM/\hspace{-0.15cm}/\bar{\emph{\textbf{x}}}-\mathrm{sup}M/\hspace{-0.15cm}/\bar{\emph{\textbf{x}}}
&=\mathrm{lc.dim}_AM+\mathrm{inf}M-n-\mathrm{inf}M/\hspace{-0.15cm}/\bar{\emph{\textbf{x}}}-\mathrm{sup}M\\
&=\mathrm{depth}_AM-n-\mathrm{inf}M/\hspace{-0.15cm}/\bar{\emph{\textbf{x}}}\\
&=\mathrm{depth}_AM/\hspace{-0.15cm}/\bar{\emph{\textbf{x}}}-\mathrm{inf}M/\hspace{-0.15cm}/\bar{\emph{\textbf{x}}},\end{aligned}$\end{center}where the third one is by \cite[Lemma 3.6]{YL}, as claimed.
\end{proof}

A map $f:(A,\bar{\mathfrak{m}})\rightarrow (B,\bar{\mathfrak{n}})$ of DG-rings is called \emph{flat local} if $\mathrm{flatdim}_AB=0$ and the induced map $\mathrm{H}^0(f):\mathrm{H}^0(A)\rightarrow\mathrm{H}^0(B)$ is  local.

It is well-known that the tensor product of regular rings is generally not regular. Bouchiba and Kabbaj \cite{BK} proved that if
$R$ is a
flat $k$-module and $S$ is a finitely generated $k$-module, then the tensor product
$R\otimes_kS$ of Cohen-Macaulay rings remains Cohen-Macaulay, where $k$ is a field. As an application of the above theorem, we explore the preservation of sequence-regularity and Cohen-Macaulayness under tensor products in the DG context, which is a DG-version of \cite[Theorem 1]{TY} and improves \cite[Theorem 4.12]{sh21}.

\begin{cor}\label{lem1.6} Let $f:(A,\bar{\mathfrak{m}})\rightarrow (B,\bar{\mathfrak{n}})$ be a flat local homomorphism of DG-rings.

$(1)$ If $B$ is local Cohen-Macaulay with constant amplitude, then $A/\hspace{-0.15cm}/\bar{\mathfrak{m}}\otimes^\mathrm{L}_AB$ is a local Cohen-Macaulay DG-ring.

$(2)$ If $A$ is sequence-regular and $A/\hspace{-0.15cm}/\bar{\mathfrak{m}}\otimes^\mathrm{L}_AB$ is a local Cohen-Macaulay DG-ring, then $B$ is local Cohen-Macaulay.

$(3)$ If $B$ is sequence-regular, then $A$ is sequence-regular.

$(4)$ If $A$ and $A/\hspace{-0.15cm}/\bar{\mathfrak{m}}\otimes^\mathrm{L}_AB$ are sequence-regular, then $B$ is sequence-regular.
\end{cor}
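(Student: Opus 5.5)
The plan is to reduce everything to the closed fibre $F:=A/\hspace{-0.15cm}/\bar{\mathfrak{m}}\otimes^\mathrm{L}_AB\simeq B/\hspace{-0.15cm}/\bar{\textbf{x}}B$. Here $\bar{\textbf{x}}=\bar{x}_1,\cdots,\bar{x}_n$ is a generating set of $\bar{\mathfrak{m}}$, chosen minimal when $A$ is sequence-regular, and $\bar{\textbf{x}}B$ denotes its image in $\mathrm{H}^0(B)\cong\mathrm{H}^0(A)\otimes\cdots$ under $\mathrm{H}^0(f)$. Since $f$ is flat local, I will use the standard DG facts:
\begin{itemize}
\item $\mathrm{H}^i(B)\cong\mathrm{H}^0(B)\otimes_{\mathrm{H}^0(A)}\mathrm{H}^i(A)$ and $\mathrm{H}^0(B)$ is flat over $\mathrm{H}^0(A)$, so $\mathrm{amp}B=\mathrm{amp}A$ and $\mathrm{inf}B=\mathrm{inf}A$;
\item $\mathrm{depth}B=\mathrm{depth}A+\mathrm{depth}F-\mathrm{inf}F$;
\item $\mathrm{lc.dim}B=\mathrm{lc.dim}A+\mathrm{lc.dim}F-\mathrm{sup}F$.
\end{itemize}
The last two come from base change of $\mathrm{RHom}_A(\bar\kappa,-)$ and the classical dimension formula for the flat local map $\mathrm{H}^0(A)\to\mathrm{H}^0(B)$. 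In addition, an $A$-regular sequence maps to a $B$-regular sequence, because $\mathrm{H}^{\mathrm{inf}B}(B)=\mathrm{H}^0(B)\otimes\mathrm{H}^{\mathrm{inf}A}(A)$ is flat base change.

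For (1), I apply Theorem \ref{lem2.3}(3) with $M=B$ viewed as a DG-module over $B$ and the sequence $\bar{\textbf{x}}B\subseteq\bar{\mathfrak{n}}$. This gives $\mathrm{amp}F=\mathrm{ampR}\Gamma_{\bar{\mathfrak{n}}}(F)$. Since $\mathrm{R}\Gamma_{\bar{\mathfrak{n}}}$ over $F$ agrees with that over $B$ (the torsion condition only sees $\mathrm{H}^0$), $F$ is a local Cohen-Macaulay DG-ring. The step needing care is that Theorem \ref{lem2.3}(3) is stated for $M$ local Cohen-Macaulay with constant amplitude, and $B$ itself satisfies both hypotheses.

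For (2), $\bar{\textbf{x}}$ is $A$-regular, so its image is $B$-regular. Then $\mathrm{amp}F=\mathrm{amp}B$ by the characterization ``$\bar x$ is $M$-regular iff $\mathrm{amp}M=\mathrm{amp}M/\hspace{-0.15cm}/\bar xM$'' from \cite[Lemma 2.13]{Mi19}. Along a regular sequence, depth and lc.dim each drop by exactly $n$ (\cite[Lemma 3.6]{YL}, \cite[Proposition 3.17]{sh21}). Hence
\[
\mathrm{lc.dim}B-\mathrm{depth}B=\mathrm{lc.dim}F-\mathrm{depth}F=\mathrm{amp}F=\mathrm{amp}B,
\]
so $B$ is local Cohen-Macaulay.

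For (3) and (4), sequence-regularity is equivalent to $\mathrm{H}^0$ being regular local and $A$ local Cohen-Macaulay, with the regular system of parameters $A$-regular. For (3), $\mathrm{H}^0(A)\to\mathrm{H}^0(B)$ is flat local and $\mathrm{H}^0(B)$ is regular, so $\mathrm{H}^0(A)$ is regular. The depth and lc.dim formulas, together with the fibre inequality $\mathrm{lc.dim}F-\mathrm{depth}F\geq\mathrm{amp}F$, then force $\mathrm{lc.dim}A-\mathrm{depth}A\leq\mathrm{amp}A$, so $A$ is Cohen-Macaulay. Finally, I would invoke \cite[Theorem 4.12]{sh21}-type reasoning: over a Cohen-Macaulay DG-ring with regular $\mathrm{H}^0$, a regular system of parameters of length $\mathrm{lc.dim}A+\mathrm{inf}A$ is $A$-regular. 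For (4), I choose $\bar{\textbf{x}}$ $A$-regular generating $\bar{\mathfrak{m}}$ and $\bar{\textbf{y}}$ lifts of an $F$-regular sequence generating $\bar{\mathfrak{n}}/\bar{\mathfrak{m}}\mathrm{H}^0(B)$. Then $\bar{\textbf{x}}B,\bar{\textbf{y}}$ is $B$-regular, using $F=B/\hspace{-0.15cm}/\bar{\textbf{x}}B$ and the inductive definition, and it generates $\bar{\mathfrak{n}}$.

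I expect the main obstacle to be (3): descending regularity of a sequence from $B$ to $A$. For this I would first try faithful flatness of $\mathrm{H}^0(A)\to\mathrm{H}^0(B)$ applied to $\mathrm{H}^{\mathrm{inf}}$ of successive Koszul quotients, since injectivity of multiplication descends under faithfully flat base change.
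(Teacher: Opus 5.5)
Your proposal is correct. Parts (1) and (2) follow the paper. Part (1) is Theorem \ref{lem2.3}(3) applied to $B$ over itself, plus the agreement of $\mathrm{R}\Gamma_{\bar{\mathfrak{n}}}$ over $F$ and over $B$. Part (2) is flat base change of an $A$-regular generating sequence of $\bar{\mathfrak{m}}$, followed by depth/dimension bookkeeping.

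Parts (3) and (4) take genuinely different routes.

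\emph{Part (3).} The paper takes a regular system of parameters of $\mathrm{H}^0(A)$ and shows its image is $B$-regular, using constant amplitude of $B$ and \cite[Theorem 3.3]{sh21}. It then gets $F$ Cohen-Macaulay from Theorem \ref{lem2.3}(3) and runs a chain of inequalities. You instead combine three ingredients: the flat depth formula $\mathrm{depth}B=\mathrm{depth}A+\mathrm{depth}F-\mathrm{inf}F$, the dimension formula, and the amplitude inequality $\mathrm{lc.dim}F-\mathrm{depth}F\geq\mathrm{amp}F$ (that is, \cite[Corollary 5.5]{Shaul} applied to the DG-ring $F$). This yields $\mathrm{lc.dim}A-\mathrm{depth}A\leq\mathrm{amp}A$ directly, with no regularity statement in $B$ needed. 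Your route is the more transparent one. Since $B$ is Cohen-Macaulay, the first inequality in the paper's chain is by itself equivalent to $\mathrm{depth}A-\mathrm{inf}A\geq\mathrm{lc.dim}A$. It is exactly your depth formula that reduces that inequality to Corollary 5.5 for $F$. Your conclusion then only needs \cite[Theorem 3.4]{sh21} (sequence-regular $\Leftrightarrow$ regular $\mathrm{H}^0$ and local Cohen-Macaulay), as in the paper. So the descent step you flag as the main obstacle is unnecessary.

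\emph{Part (4).} The paper passes to derived completions, uses dualizing DG-modules and invokes \cite[Theorem 4.12(2)]{sh21}. You concatenate the $A$-regular generators of $\bar{\mathfrak{m}}$ with lifts of $F$-regular generators of the maximal ideal of $\mathrm{H}^0(F)$. This is elementary and self-contained: it uses only $F\simeq B/\hspace{-0.15cm}/f(\bar{x})$ and the inductive definition of regular sequences.

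Two small corrections.

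In (2), the claim that $\mathrm{lc.dim}$ drops by exactly $n$ along any regular sequence is false for DG-rings in general. Indeed $\mathrm{lc.dim}B=\mathrm{dim}\mathrm{H}^0(B)$, and a $B$-regular element need not be a parameter on $\mathrm{H}^0(B)$. For example, take $B=R\ltimes N[1]$ with $R=k[[t,s]]/(ts)$ and $N=R/(s)$. Then $t$ is $B$-regular, but $\mathrm{dim}R/(t)=\mathrm{dim}R$. In your setting, $\mathrm{lc.dim}F=\mathrm{lc.dim}B-d$ should instead come from your dimension formula together with $\mathrm{lc.dim}A=d$; this is what the paper does via \cite[Theorem A.11]{BH}.

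In (3), a regular system of parameters of $\mathrm{H}^0(A)$ has length $\mathrm{lc.dim}A=\mathrm{dim}\mathrm{H}^0(A)$, not $\mathrm{lc.dim}A+\mathrm{inf}A$.
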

\begin{proof} As $\mathrm{flatdim}_AB=0$, the induced map $\mathrm{H}^0(f):\mathrm{H}^0(A)\rightarrow\mathrm{H}^0(B)$ is also flat.

(1) This follows from Theorem \ref{lem2.3}(3) and \cite[Proposition 2.9]{sh20}.

(2) By assumption, $\bar{\mathfrak{m}}$ is generated by an $A$-regular sequence $\bar{x}_1,\cdots,\bar{x}_d$ with $d=\mathrm{lc.dim}A$, so $A/\hspace{-0.15cm}/\bar{\mathfrak{m}}\otimes^\mathrm{L}_AB\simeq B/\hspace{-0.15cm}/(f(\bar{x_1}),\cdots,f(\bar{x_d}))$. Since $\mathrm{flatdim}_AB=0$,  it follows by \cite[Lemma 2.13(2)]{Mi19} and \cite[Lemma 4.11]{sh21} that $\mathrm{inf}(A/\hspace{-0.15cm}/\bar{\mathfrak{m}}\otimes^\mathrm{L}_AB)=\mathrm{inf}A/\hspace{-0.15cm}/(\bar{x}_1,\cdots,\bar{x}_d)=\mathrm{inf}A=\mathrm{inf}B$,
 so $f(\bar{x_1}),\cdots,f(\bar{x_d})\in\bar{\mathfrak{n}}$ is $B$-regular by \cite[Lemma 2.13(2)]{Mi19} again. Hence \cite[Theorem A.11]{BH} implies that $\mathrm{lc.dim}B=\mathrm{dim}\mathrm{H}^0(B)
=\mathrm{dim}\mathrm{H}^0(A/\hspace{-0.15cm}/\bar{\mathfrak{m}}\otimes^\mathrm{L}_AB)+\mathrm{dim}\mathrm{H}^0(A)=
\mathrm{depth}(A/\hspace{-0.15cm}/\bar{\mathfrak{m}}\otimes^\mathrm{L}_AB)-\mathrm{inf}(A/\hspace{-0.15cm}/\bar{\mathfrak{m}}\otimes^\mathrm{L}_AB)+\mathrm{lc.dim}A=
\mathrm{depth}B/\hspace{-0.15cm}/(f(\bar{x_1}),\cdots,f(\bar{x_d}))-\mathrm{inf}B+\mathrm{lc.dim}A
=\mathrm{depth}B-\mathrm{inf}B$ and then $B$ is local Cohen-Macaulay.

(3) By \cite[Theorem 3.4]{sh21}, $\mathrm{H}^0(B)$ is regular, it follows by \cite[Theorem 1(b1)]{TY} that $\mathrm{H}^0(A)$ is regular and $\bar{\mathfrak{m}}$ is generated by an $\mathrm{H}^0(A)$-regular sequence $\bar{x}_1,\cdots,\bar{x}_d$ with $d=\mathrm{lc.dim}A$. By \cite[Proposition 1.1.2]{BH}, one has $f(\bar{x}_1),\cdots,f(\bar{x}_d)$ is $\mathrm{H}^0(B)$-regular. As $B$ is sequence-regular, $B$ has constant amplitude by \cite[Remark 7.4]{sh21}, so $f(\bar{x}_1),\cdots,f(\bar{x}_d)$ is $B$-regular by \cite[Theorem 3.3]{sh21}. Thus
$A/\hspace{-0.15cm}/\bar{\mathfrak{m}}\otimes^\mathrm{L}_AB\simeq B/\hspace{-0.15cm}/f(\bar{x_1}),\cdots,f(\bar{x_d})$ is a local Cohen-Macaulay DG-ring by Theorem \ref{lem2.3}(3), and we have the following (in)equalities
\begin{center}$\begin{aligned}\mathrm{depth}B-\mathrm{inf}B-\mathrm{depth}A+\mathrm{inf}A
&\leq\mathrm{dim}\mathrm{H}^0(B)-\mathrm{dim}\mathrm{H}^0(A)\\
&=\mathrm{dim}\mathrm{H}^0(A/\hspace{-0.15cm}/\bar{\mathfrak{m}}\otimes^\mathrm{L}_AB)\\
&=\mathrm{lc.dim}(A/\hspace{-0.15cm}/\bar{\mathfrak{m}}\otimes^\mathrm{L}_AB)\\
&=\mathrm{depth}B/\hspace{-0.15cm}/f(\bar{x_1}),\cdots,f(\bar{x_d})-\mathrm{inf}B/\hspace{-0.15cm}/f(\bar{x_1}),\cdots,f(\bar{x_d})\\
&=\mathrm{depth}B-d-\mathrm{inf}B\\
&\leq\mathrm{depth}B-\mathrm{inf}B-\mathrm{depth}A+\mathrm{inf}A,\end{aligned}$\end{center}
where the two inequalities are by \cite[Corollary 5.5]{Shaul}, the fourth equality is by \cite[Proposition 3.2]{sh20} and \cite[Lemma 2.13(2)]{Mi19}. Thus $\mathrm{lc.dim}A=\mathrm{depth}A-\mathrm{inf}A$, and then $A$ is sequence-regular by \cite[Theorem 3.4]{sh21}.

(4) Since $f:(A,\bar{\mathfrak{m}})\rightarrow (B,\bar{\mathfrak{n}})$ be a flat local homomorphism of DG-rings, the map $\mathrm{L}\Lambda_{\bar{\mathfrak{m}}}(A,\bar{\mathfrak{m}})\rightarrow \mathrm{L}\Lambda_{\bar{\mathfrak{n}}}(B,\bar{\mathfrak{n}})$ is flat local by \cite[Proposition 3.18]{s19}. By \cite[Corollary 4.5]{sh21}, we may assume that $A$ and $B$ are derived completion. Then $A$ and $B$ have dualizing DG-modules by \cite[Proposition 7.21]{s18}.
As $\mathrm{H}^0(A/\hspace{-0.15cm}/\bar{\mathfrak{m}}\otimes^\mathrm{L}_AB)\cong \mathrm{H}^0(A)/\bar{\mathfrak{m}}\otimes_{\mathrm{H}^0(A)}\mathrm{H}^0(B)$ is regular by \cite[Theorem 3.4]{sh21}, it follows by \cite[Theorem 4.12(2)]{sh21} that $B$ is sequence-regular.
\end{proof}

\bigskip \centerline {\bf Acknowledgements}
\bigskip
This research was partially supported by National Natural Science Foundation of China (12571035).

\renewcommand\refname

{\bf References}
\begin{thebibliography}{99}
\bibitem{AY}T. Araya, Y. Yoshino,  Remarks on a depth formula, a grade inequality and a conjecture of Auslander, \emph{Comm. Algebra} \textbf{11} (1998) 3793--3806.
\bibitem{BK}S. Bouchiba, S.E. Kabbaj, Tensor products of Cohen-Macaulay rings: solution to a problem of Grothendieck, \emph{J. Algebra} \textbf{252} (2002) 65--73.
\bibitem{B}N. Bourbaki, \emph{Alg$\grave{\mathrm{e}}$bre Commutative}, Herman, Paris 1967.
\bibitem{BH}W. Bruns, J. Herzog, \emph{Cohen-Macaulay rings}, Cambridge Studies in Advanced Mathematics 39, Cambridge University Press, 1993.
\bibitem{BSSW}I. Bird, L. Shaul, P. Sridhar, J. Williamson, Finitistic dimension over commutative DG-rings, \emph{Math. Z.} \textbf{309} (2025) 1--29.
\bibitem{CFH}L.W. Christensen, H.-B. Foxby, H. Holm. \emph{Derived Category Methods in Commutative Algebra}, Springer Monographs in Mathematics, 2024.
\bibitem{DAT}K.  Divaani-Aazar, F.M. Aghjeh Mashhad, M. Tousi, On the existence of certain modules of finite Gorenstein homological dimensions, \emph{Comm. Algebra} \textbf{42} (2013) 1630--1643.
\bibitem{F}H. B. Foxby,  Quasi-perfect modules over Cohen-Macaulay rings, \emph{Math. Nachr.} \textbf{66} (1975) 103--110.
\bibitem{FIJ}A. Frankild, S. Iyengar, P. J${\o}$gensen, Dualizing differential graded modules and Gorenstein differential graded algebras, \emph{J. London Math. Soc.} \textbf{68} (2003) 288--306.
\bibitem{JMM}V.H. Jorge-P$\acute{\mathrm{e}}$rez, P. Martins, V.D. Mendoza-Rubio, Ischebeck's formula, grade and quasi-homological dimensions, arXiv: 2412.06659v1, 2024.
\bibitem{KY}L. Khatami, S. Yassemi, Cohen-Macaulayness of tensor products, \emph{Rocky Mountain J. Math.} \textbf{34} (2004) 205--213.
\bibitem{M}H. Matsumura, \emph{Commutative ring theory}, Combridge University press, 1986.
\bibitem{Mi19}H. Minamoto, Homological identities and dualizing complexes of commutative differential graded algebras, \emph{Israel J. Math.} \textbf{242} (2021) 1--36.
\bibitem{R}D. Rees, The Grade of an ideal or module, \emph{Proc. Camb. Phil. Soc.} \textbf{53} (1957) 28--42.
\bibitem{s18}L. Shaul, Injective DG-modules over non-positive DG-rings, \emph{J. Algebra} \textbf{515} (2018) 102--156.
\bibitem{s19}L. Shaul,  Completion and torsion over commutative DG rings, \emph{Israel J. Math.} \textbf{232} (2019) 531--588.
\bibitem{Shaul}L. Shaul, The Cohen-Macaulay property in derived commatative algebra, \emph{Trans. Amer. Math. Soc.} \textbf{373} (2020) 6095--6138.
\bibitem{sh20}L. Shaul, Koszul complexes over Cohen-Macaulay rings, \emph{Adv. Math.} \textbf{386} (2021) 107806.
\bibitem{sh21} L. Shaul, Sequence-regular commutative DG-rings, \emph{J. Algebra} \textbf{647} (2024) 400--435.
\bibitem{TY}M. Tousi, S. Yassemi, Tensor products of some special rings, \emph{J. Algebra} \textbf{268} (2003) 672--676.
\bibitem{Y25} X.Y. Yang, Intersection Theorem for DG-modules over non-positive DG-rings, \emph{Proc. Roy. Soc. Edinb. A}, published online.
\bibitem{ya20}X.Y. Yang, L. Wang, Homological invariants over non-positive DG-rings, \emph{J. Algebra Appl.} 2050153 (2020) (18 pages).
\bibitem{YL} X.Y. Yang, Y.J. Li, Local Cohen-Macaulay DG-Modules, \emph{Appl. Categor. Struct.} \textbf{31} (2023) 1--13.
\bibitem{Ya}S. Yassemi, L. Khatami, T. Sharif, Grade and Gorenstein dimension, \emph{Comm. Algebra} \textbf{29} (2001) 5085--5094.
\bibitem{Y}A. Yekutieli, \emph{Derived Categories}, Cambridge Studies in Advanced Mathematics, Cambridge University Press, Cambridge, 2019.
\bibitem{Yo}K. Yoshida, Tensor products of perfect modules and maximal surjective Buchsbaum modules, \emph{J. Pure Appl. Algebra} \textbf{123} (1998) 313--326.
\end{thebibliography}
\end{document}